\documentclass[11pt]{article}
\usepackage[margin=1.08in]{geometry}
\usepackage[T1]{fontenc}
\usepackage{lmodern}
\usepackage{microtype}
\usepackage{amsmath,amssymb,amsthm,mathtools}
\usepackage{enumitem}
\usepackage{booktabs}
\usepackage{hyperref}
\usepackage[nameinlink,noabbrev]{cleveref}
\usepackage{mathrsfs}
\usepackage{xcolor}
\hypersetup{colorlinks=true,linkcolor=blue!45!black,citecolor=blue!45!black,urlcolor=blue!45!black}
\setlist{nosep}
\allowdisplaybreaks

\newtheorem{theorem}{Theorem}[section]
\newtheorem{proposition}[theorem]{Proposition}
\newtheorem{lemma}[theorem]{Lemma}

\theoremstyle{definition}
\newtheorem{definition}[theorem]{Definition}
\newtheorem{remark}[theorem]{Remark}

\newcommand{\Spf}{\operatorname{Spf}}
\newcommand{\Spec}{\operatorname{Spec}}
\newcommand{\End}{\operatorname{End}}
\newcommand{\Hom}{\operatorname{Hom}}
\newcommand{\Lie}{\operatorname{Lie}}
\newcommand{\Gal}{\operatorname{Gal}}
\newcommand{\htp}{\operatorname{ht}}
\newcommand{\DE}{\operatorname{DE}}
\newcommand{\ord}{\operatorname{ord}}

\newcommand{\cO}{\mathcal O}
\newcommand{\cH}{\mathcal H}
\newcommand{\cQ}{\mathcal Q}
\newcommand{\cE}{\mathcal E}
\newcommand{\m}{\mathfrak m}
\newcommand{\Sstab}{\mathbb S}
\newcommand{\Gstab}{\mathbb G}
\newcommand{\Qp}{\mathbf Q_p}
\newcommand{\Zp}{\mathbf Z_p}

\newcommand{\Cp}{\mathbf C_p}

\title{Invariant Primes in Lubin--Tate Space\\and Hovey--Strickland at Every Height}
\author{Chenglong Ma}
\date{}

\begin{document}
\maketitle

\begin{abstract}
Let $H_n$ be the one-dimensional Honda formal group of height $n$ over $\mathbf F_{p^n}$ and let
\[
R_n=W(\mathbf F_{p^n})[[u_1,\ldots,u_{n-1}]],\qquad A_n=R_n/(p).
\]
We prove, for every height $n$ and every prime $p$, that the prime ideals of $A_n$ stable under an open subgroup of the Morava stabilizer group are exactly the height ideals $(u_1,\ldots,u_j)$. We also prove that a stable prime of $R_n$ avoiding $p$ is zero. The resulting radical-ideal classification implies the Hovey--Strickland classification of thick tensor ideals in the category of dualizable $K(n)$-local spectra via the forward implication of Barthel--Heard--Naumann.

The special-fiber argument is local and geometric. After cutting an invariant prime by a one-parameter curve, we construct from the Cartier structure equation a smooth formal quotient $\cQ$ and a distinguished subgroup $\cH$. The generic fiber of $\cH$ is identified with the deformation space of connected--\'etale extensions, hence with $\Hom_{\Zp}(T_pE,C)$. A Cartier obstruction map from the full Honda endomorphism order is compared with evaluation on Tate vectors through completed universal covers. Fargues--Fontaine vector bundles give a period-detection statement. Chai's rigidity theorem then promotes detection by homomorphisms to formal Zariski density after a renormalization of the valuation. A uniform fixed-jet argument transfers this density to Morava-stabilizer orbits. The generic-fiber assertion is proved separately from the Gross--Hopkins period map.
\end{abstract}

\tableofcontents

\section{Statement and proof architecture}

Fix a prime $p$ and an integer $n\geq 1$. Let $H_n$ be the one-dimensional Honda formal group over $\mathbf F_{p^n}$, in a coordinate for which
\[
[p]_{H_n}(X)=X^{p^n}.
\]
Put
\[
R_n=W(\mathbf F_{p^n})[[u_1,\ldots,u_{n-1}]],\qquad A_n=R_n/(p).
\]
The Morava stabilizer and extended stabilizer groups are
\[
\Sstab_n=\End(H_n)^\times,\qquad
\Gstab_n=\Sstab_n\rtimes\Gal(\mathbf F_{p^n}/\mathbf F_p).
\]
The topology on $\Sstab_n$ is the $p$-adic topology inherited from the maximal order $\End(H_n)$ in the height-$n$ Honda division algebra.

For $0\leq j\leq n-1$ and $0\leq k\leq n$, define
\[
J_j=(u_1,\ldots,u_j)\subset A_n,\qquad J_0=(0),
\]
and
\[
I_0=(0),\qquad I_k=(p,u_1,\ldots,u_{k-1})\subset R_n\quad(1\leq k\leq n).
\]

\begin{theorem}[Main theorem]\label{thm:main}
For every $n\geq1$ and every prime $p$ the following hold.
\begin{enumerate}
\item For every open subgroup $U\subset \Sstab_n$, the $U$-stable prime ideals of $A_n$ are exactly
\[
J_0,J_1,\ldots,J_{n-1}.
\]
\item If $P\subset R_n$ is prime, stable under an open subgroup of $\Sstab_n$, and $p\notin P$, then $P=0$. Consequently the proper radical ideals of $R_n$ stable under an open subgroup are exactly
\[
I_0,I_1,\ldots,I_n.
\]
\item In the category $\mathrm{Sp}_{K(n)}^{\mathrm{dual}}$ of dualizable $K(n)$-local spectra, every thick tensor ideal is one of
\[
\mathcal D_k=\langle L_{K(n)}F(k)\rangle_\otimes\quad(0\leq k\leq n),
\qquad \mathcal D_{n+1}=0,
\]
where $F(k)$ is any finite $p$-local spectrum of chromatic type $k$. The Balmer spectrum consists of $\mathcal D_1,\ldots,\mathcal D_{n+1}$, with
\[
\overline{\{\mathcal D_k\}}=\{\mathcal D_i:i\geq k\}.
\]
\end{enumerate}
\end{theorem}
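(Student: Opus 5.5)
We treat the three parts in the order (1), (2), (3), since (2) and (3) reduce to (1). First, the ideals $J_j$ are stable: the Landweber ideals $I_k$ are invariant under the full group $\Gstab_n$, hence under every open $U$. So in (1) the real content is that there are no other $U$-stable primes, and we argue by induction on $j$. Let $P\subset A_n$ be a $U$-stable prime and let $j$ be maximal with $J_j\subseteq P$. Then $\bar P=P/J_j$ is a $U$-stable prime of
\[
A_n/J_j=\mathbf F_{p^n}[[u_{j+1},\ldots,u_{n-1}]],
\]
and it does not contain $u_{j+1}$. Over $\Spf(A_n/J_j)$ the universal deformation has generic height exactly $j+1$, so we may assume $j=0$ and aim to show: a $U$-stable prime $P$ of $A_n$ with $u_1\notin P$ is zero.

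Suppose $P\neq 0$. We choose a formal curve $\gamma\colon\Spf\mathbf F_{p^n}[[t]]\to V(P)$ whose generic point has height $1$, i.e.\ it does not meet $V(u_1)$. Along $\gamma$ the pulled-back formal group has a connected part of height $1$ and an \'etale quotient of height $n-1$. The orbit of $\gamma$ under $U$ stays inside $V(P)$, which is a proper closed formal subscheme. The key step is to show that the $U$-orbit of any such non-constant curve jet is formally Zariski dense in $\Spf A_n$. This contradicts properness of $V(P)$, so $P=0$.

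To get density we use the route of the abstract:
\begin{itemize}
\item linearize the action of $U$ on deformations of the curve through the Cartier structure equation, producing the quotient $\cQ$ and the subgroup $\cH$;
\item identify the generic fiber of $\cH$ with the space of connected--\'etale extensions, namely $\Hom_{\Zp}(T_pE,C)$;
\item show that the obstruction map from $\End(H_n)$ to it is nondegenerate, by comparing it with evaluation on Tate vectors and using Fargues--Fontaine period detection;
\item apply Chai's rigidity theorem: a formal subvariety stable under a large enough group of "linear" automorphisms of this kind is a formal subtorus or sub-$p$-divisible group, so the orbit closure is everything.
\end{itemize}
We expect this density step to be the main obstacle. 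The difficulty is that Chai's rigidity is stated for Serre--Tate-type coordinates, while $\Spf A_n$ has no canonical group structure. The renormalized valuation and the uniform fixed-jet argument are needed to pass from detection by individual homomorphisms to density of whole $U$-orbits, uniformly over the choice of open $U$.

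For (2), let $P\subset R_n$ be prime, $U$-stable, with $p\notin P$. Its image in the generic fiber $\Spf R_n[1/p]$ is stable under $U$. The Gross--Hopkins period map is \'etale and $\Sstab_n$-equivariant onto $\mathbf P^{n-1}$. An open subgroup of $\Sstab_n$, acting through $D^\times$ on $\mathbf P^{n-1}(\Cp)$, has Zariski-dense orbits; this holds because $D^\times$ is anisotropic, so no proper algebraic subvariety is stable under an open subgroup of it. Pulling back along the \'etale period map gives $P[1/p]=0$. Since $p\notin P$, this means $P=0$. For the classification of radical ideals, write a stable radical ideal $I$ as an intersection of minimal primes. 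Those minimal primes are permuted by $U$, so some finite-index open subgroup fixes each of them. Each one is then either $0$ (the case $p\notin P$) or contains $p$ and, by (1), is some $I_k$. Because the $I_k$ form a chain, $I$ equals one of $I_0,\ldots,I_n$.

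For (3), we apply the forward implication of Barthel--Heard--Naumann. It says that if the open-subgroup-stable radical ideals of $R_n$ are exactly the $I_k$, then the thick tensor ideals of $\mathrm{Sp}_{K(n)}^{\mathrm{dual}}$ correspond to them through $X\mapsto \sqrt{\mathrm{supp}}$ of $E_*X$. Under this correspondence $I_k$ corresponds to $\mathcal D_k$, using Landweber's computation of $E_*F(k)$ and the fact that the thick subcategory theorem is what places $F(k)$ in $\mathcal D_k$. The Balmer spectrum and its closure relations then follow from the fact that these ideals form a chain: the primes are $\mathcal D_1,\ldots,\mathcal D_{n+1}$, and the closure relations are read off from the inclusions among the corresponding $I_k$.
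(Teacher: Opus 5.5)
\textbf{The genuine gap is the generic-fiber argument in (2).} Your part (3) matches the paper. So does the radical-ideal deduction in (2), modulo checking that the finite-index stabilizers are closed, hence open, as the paper does. Your part (1) is in substance an outline of the paper's own route.

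\textbf{Why the argument for (2) fails.} You observe, correctly, that no proper \emph{algebraic} subvariety of $\mathbf P^{n-1}_{\Cp}$ is stable under an open $U\subset\Sstab_n$. The reason is that $U$ is Zariski dense in $(D_n\otimes_{\Qp}\Cp)^\times\simeq\mathrm{GL}_n(\Cp)$, which acts transitively. Anisotropy of $D^\times$ plays no role: open subgroups of $\mathrm{GL}_n(\Qp)$ behave the same way.

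But the object to be excluded is the $U$-stable closed \emph{rigid-analytic} subvariety $Z$ of the Lubin--Tate ball cut out by $P$. The period map is \'etale but transcendental, with infinite fibres. So $\Phi(Z)$ has no reason to be algebraic, or even closed analytic, and $Z$ is not a preimage of an algebraic set; there is nothing to pull back. Knowing that the Zariski closure of $\Phi(Z)$ is all of $\mathbf P^{n-1}$ gives no contradiction once $n\ge 3$, since a positive-dimensional analytic subvariety may well have Zariski-dense image. Only for $n=2$, where $Z$ is finite, does your argument close.

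\textbf{What is needed: the local, infinitesimal statement the paper proves.}
First, $Z$ is nonempty of dimension $\le n-2$ and has a smooth point $x$. This is where $p\notin P$ really enters, via a system of parameters beginning with $p$ (Lemma~\ref{lem:proper-generic}).
Next, take $y=\Phi(x)$ and a local inverse $\psi$ of $\Phi$ through $x$. A local equation of $Z$ pulled back along $z\mapsto\psi(\exp(za)y)$ vanishes for all small $z\in\Qp$, because $\exp(za)\in U$. By the identity theorem it then vanishes identically on a $\Cp$-disc.
Hence $d\Phi_x(T_xZ)$ contains $a\cdot y$ for every $a\in D_n$. Now $D_n\otimes_{\Qp}\Cp\simeq M_n(\Cp)$ maps onto $T_y\mathbf P^{n-1}\simeq\Hom(y,\Cp^n/y)$, and $d\Phi_x$ is an isomorphism. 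So $\dim T_xZ=n-1$, contradicting smoothness of $x$ on $Z$.
It is the $\Cp$-span of the Lie algebra acting at a point, not algebraic density of orbits, that does the work.

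\textbf{Two corrections to your outline of (1).}
\begin{enumerate}
\item The reduction ``we may assume $j=0$'' is not legitimate. $A_n/J_{s-1}$ with its $U$-action is not a Lubin--Tate ring of smaller height, and along the test curve the connected part has height $s$, not $1$. The argument has to be run for general $s$, exactly as the paper does: with $\cH_\Omega\simeq H_s^r$, $r=n-s$, growth rate $q=p^s$, and density required in $\Spf A_n/J_{s-1}$.
\item Chai's rigidity is not applied to a $U$-orbit closure in Lubin--Tate space, which carries no group structure. It is applied inside $\cH_\Omega$ to the Zariski closure of the renormalized obstruction points $x_\alpha$, which is stable only under scalars $\Zp^\times$ via $x_{z\alpha}=[z]x_\alpha$. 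A proper closure would yield, after a finite-union argument, a nonzero $h:\cH_\Omega\to H_s$ killing every $x_\alpha$, hence every $c(\alpha)$. That contradicts Proposition~\ref{prop:period-detect}. The density is then carried to the displacements $g_\ell u-u$ through $\langle g_\ell u-u\rangle=\delta(x_\alpha)$ and the nonzero series $F(v)=f(u+\delta(v))$.
\end{enumerate}
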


After the paper was written, the author learned that the same result was also proved independently and by a different method by Ningyi Li \cite{Li}. The result is also contained in a paper \cite{OpenAI} released by OpenAI on October 6, 2026, in which the proof strategy and specific choices of notation are identical to a preliminary version of the present paper that was uploaded to ChatGPT on September 8, 2026.

We indicate the main steps, emphasizing how the auxiliary Cartier geometry is converted back into the stabilizer action on Lubin--Tate space.

\begin{enumerate}[label=(\arabic*)]
\item \emph{Height strata and reduction to a curve.}
Let $P\subset A_n$ be an invariant prime of generic formal height $s<n$. Then
\[
J_{s-1}\subset P,\qquad u_s\notin P.
\]
To prove equality, assume the inclusion is strict and choose a nonzero equation of $P/J_{s-1}$. Lemma \ref{lem:curve} specializes $A_n/P$ to a one-parameter complete valuation ring $B=k[[t]]$ with $u_s\neq0$. Over $K=k((t))$ the resulting $p$-divisible group has connected--\'etale sequence
\[
0\longrightarrow C\longrightarrow G_K\longrightarrow E\longrightarrow0,
\qquad \htp(C)=s,\quad \htp(E)=r=n-s.
\]
Writing $u=(u_s,\ldots,u_{n-1})\in(tB)^r$, the problem is reduced to understanding the stabilizer action along this valued curve.

\item \emph{A Cartier quotient and the extension group.}
For the Cartier module $M$ of the formal part of $G$, the structure operator $D$ admits a normal-form theory. It produces a smooth formal group
\[
\cQ\simeq M/DM
\]
of dimension $n-1$ and a distinguished smooth subgroup $\cH\subset\cQ$ of dimension $r$. Multiplication by $p$ on $\cQ$ lands in $\cH$ and raises positive valuations by at least the factor $q=p^s$. The block-extension construction identifies the generic fiber with the connected--\'etale extension space:
\[
\cH_K\simeq\DE_K(E,C)\simeq\Hom_{\Zp}(T_pE,C).
\]
After base change to an algebraically closed field, $\cH_K$ is therefore isomorphic to $H_s^r$.

\item \emph{Obstructions to lifting Honda endomorphisms.}
Let $\cO_D=\End(H_n)$. For $\alpha\in\cO_D$, choose an integral Cartier lift of the special-fiber endomorphism and take the normal-form class of its defect. This gives
\[
c:\cO_D\longrightarrow\cH(tB),\qquad \alpha\longmapsto c(\alpha).
\]
The class $c(\alpha)$ vanishes exactly when $\alpha$ lifts to the deformation. Moreover $c$ is additive and satisfies
\[
c(p\alpha)=[p]c(\alpha),
\]
so the points $c(p^\ell\alpha)$ approach the origin with valuation at least $q^\ell$.

\item \emph{Fargues--Fontaine detection.}
After passing to $\Omega=\widehat{\overline K}$, an integral Tate vector $y\in T_p(E_\Omega)$ defines an evaluation map
\[
\Theta_y:\cH_\Omega\longrightarrow H_{s,\Omega}.
\]
Using completed universal covers, the evaluation of an obstruction class is identified with the Honda projection of $\alpha y$. On the Fargues--Fontaine curve this is encoded by an exact sequence
\[
0\longrightarrow T_p(E_\Omega)[1/p]\otimes_{\Qp}\cO_X
\longrightarrow\cE_n\longrightarrow\cE_s\longrightarrow0.
\]
After splitting the Honda division algebra, a matrix-spanning argument shows that every nonzero homomorphism $\cH_\Omega\to H_{s,\Omega}$ detects at least one obstruction class $c(\alpha)$.

\item \emph{Renormalization and formal density.}
The valuation growth $q^\ell$ is too rapid to retain the obstruction points in the original $t$-adic topology. We therefore form a complete valued field $\mathcal R$ by renormalizing valuations by $q^\ell$, and set
\[
x_\alpha=\langle[p]^\ell c(\alpha)\rangle\in\cH(\mathfrak m_{\mathcal R}).
\]
If a nonzero formal equation vanished on every $x_\alpha$, its zero locus would be stable under an open subgroup of scalar units. Chai's rigidity theorem forces each relevant irreducible component to be a $p$-divisible formal subgroup of $H_s^r$. Dieudonne--Manin semisimplicity then gives a nonzero homomorphism to $H_s$ vanishing on such a component. A finite-union argument in the Honda endomorphism algebra would make that homomorphism vanish on every obstruction period, contradicting the detection step. Hence the points $x_\alpha$ are formally Zariski dense.

\item \emph{From obstruction coordinates to stabilizer displacements.}
For fixed $\alpha$ put
\[
g_\ell=\exp(p^\ell\alpha)\in\Sstab_n,
\qquad v_\ell=c(g_\ell-1).
\]
Solving the Cartier gauge equation compares $v_\ell$ with the parameter displacement $g_\ell u-u$. Because a Cartier curve has infinitely many coordinates, the comparison is made uniformly at each finite Taylor order. Appendix \ref{app:uniform} constructs a formal map $\delta$ with invertible linear term such that, in the renormalized field,
\[
\langle g_\ell u-u\rangle=\delta(x_\alpha).
\]
If $f$ is a nonzero equation arising from the assumed strict inclusion $J_{s-1}\subsetneq P$, invariance gives $f(g_\ell u)=0$. Lemma \ref{lem:fixed-jet-transfer} then produces a nonzero formal series
\[
F(v)=f(u+\delta(v))
\]
vanishing on every $x_\alpha$, contradicting formal density. Thus $P=J_{s-1}$.

\item \emph{The mixed-characteristic generic fiber.}
If a nonzero invariant prime $P\subset R_n$ avoids $p$, its rigid generic fiber is a nonempty proper subspace of dimension at most $n-2$. The Gross--Hopkins period map is \'etale and equivariant. Invariance under an open subgroup forces the tangent space of this subspace to contain the infinitesimal action of $D_n=\End(H_n)[1/p]$. After scalar extension $D_n$ becomes a full matrix algebra, so these infinitesimal directions span the tangent space of projective space. Since the period map is \'etale, the supposed proper subspace has full tangent dimension, a contradiction.

\item \emph{Radical ideals and thick tensor ideals.}
The prime classification implies that the invariant proper radical ideals of $R_n$ are precisely $I_0,\ldots,I_n$. The forward implication of Barthel--Heard--Naumann then identifies the thick tensor ideals of dualizable $K(n)$-local spectra with the chromatic ideals $\mathcal D_k$ appearing in Theorem \ref{thm:main}.
\end{enumerate}

\subsection*{Statement on AI Use}

The original idea of applying Chai's rigidity results to the
Hovey--Strickland conjecture arose from the author's work on questions
related to Shimura varieties. In that context, the author had
become familiar with Chai's methods for studying invariant formal
subvarieties and density phenomena. This led the author to consider
whether these methods could be applied to the action of the Morava
stabilizer group on Lubin--Tate deformation spaces, and thereby to the
Hovey--Strickland conjecture.

The main conceptual route originated
from the author's observation described above. After the proof of the general statement was formulated, ChatGPT was used on September 8, 2026 to check that the author's method works in height-$3$ case. AI assistance was subsequently used for proof auditing. On September 10,
2026, during a review of the argument, ChatGPT identified a problem in
the original proof of \cref{thm:fixed-jet} concerning the passage from
infinite Cartier-coordinate data to finite jets. It suggested the
fixed-jet repair developed in Appendix~\ref{app:uniform}. The resulting
argument was then rewritten, checked, and incorporated into the proof by
the author. This is the only part of the proof whose initial argument was
generated by AI.

The author is responsible for the final statements and proofs.

\subsection*{Acknowledgments}

During the
course of this work, the author was supported by the Danish National Research Foundation through the
Copenhagen Center for Geometry and Topology (DNRF151).

\section{Height strata and reduction to one parameter}

We use the covariant slope convention, so the sum of the slopes of a $p$-divisible group equals its dimension. A one-dimensional formal group over a field of characteristic $p$ has height $s$ if the first nonzero term of its $p$-series has degree $p^s$.

In Honda deformation coordinates, after quotienting by $u_1,\ldots,u_{i-1}$, the first possible nonzero term of the $p$-series is $u_iX^{p^i}$; if every $u_i$ vanishes, the first nonzero term is $X^{p^n}$. Hence a prime $P\subset A_n$ has generic formal height $s<n$ precisely when
\begin{equation}\label{eq:height-condition}
J_{s-1}\subset P,\qquad u_s\notin P.
\end{equation}
Each $J_j$ is prime and stabilizer invariant: an invertible change of formal coordinate preserves the ideal generated by the coefficients of the $p$-series below degree $p^{j+1}$, and $A_n/J_j$ is a power-series domain.

The next lemma is the only place where we reduce the multivariable geometry to one parameter. The chosen curve is a test curve; it is not required to carry a stabilizer action.

\begin{lemma}[Curve specialization]\label{lem:curve}
Let $P\subset A_n$ be prime of generic formal height $s<n$. Then there is a local homomorphism
\[
A_n/P\longrightarrow B=k[[t]],
\]
with $k$ algebraically closed, such that the image of $u_s$ is nonzero and every deformation parameter maps into $tB$.
\end{lemma}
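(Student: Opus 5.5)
The plan is the standard commutative-algebra fact that a complete local noetherian domain admits curves through its closed point. Put $S=A_n/P$. Since $A_n=\mathbf F_{p^n}[[u_1,\ldots,u_{n-1}]]$ is a complete regular local ring, $S$ is a complete local noetherian domain with residue field $\mathbf F_{p^n}$. By \eqref{eq:height-condition}, the image $\bar u_s$ of $u_s$ in $S$ is nonzero. We need a local homomorphism $S\to k[[t]]$ that does not kill $\bar u_s$. Locality then gives automatically that the maximal ideal, which contains all deformation parameters $u_i$, maps into $tB$.

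First dispose of the degenerate case $\dim S=0$. Then $S$ is a field, so its maximal ideal $(\bar u_1,\ldots,\bar u_{n-1})$ is zero. This contradicts $\bar u_s\neq0$ (recall $s\ge1$, since $u_0=p$ is killed in $A_n$). So $\dim S=d\ge1$.

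\textbf{Step 1: choose a height-$(d-1)$ prime avoiding $\bar u_s$.} Since $\bar u_s$ is a nonzero nonunit of the domain $S$, its minimal primes have height one. We want a prime $\mathfrak q\subset S$ with $\dim S/\mathfrak q=1$ and $\bar u_s\notin\mathfrak q$.

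Start from a system of parameters $\bar u_s,y_2,\ldots,y_d$ of $S$ beginning with $\bar u_s$; this exists because $\dim S/(\bar u_s)=d-1$ in a catenary local domain. Take $\mathfrak q$ to be a minimal prime of $(y_2,\ldots,y_d)$ with $\dim S/\mathfrak q=1$. Then $\bar u_s\notin\mathfrak q$: otherwise $\mathfrak q\supset(\bar u_s,y_2,\ldots,y_d)$ would be $\mathfrak m$-primary, which is impossible since $\dim S/\mathfrak q=1$.

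\textbf{Step 2: normalize the curve.} $S'=S/\mathfrak q$ is a one-dimensional complete local domain in which $\bar u_s$ is still nonzero. Its integral closure $\widetilde{S'}$ in its fraction field is finite over $S'$, because complete local rings are Nagata/excellent. It is also local, since $S'$ is complete and henselian and $\widetilde{S'}$ is a domain. So $\widetilde{S'}$ is a complete DVR of equal characteristic $p$ with residue field $k_0$ finite over $\mathbf F_{p^n}$.

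By the Cohen structure theorem, $\widetilde{S'}\cong k_0[[t]]$. The composite $S\to S'\hookrightarrow\widetilde{S'}$ is injective on $S'$, so $\bar u_s$ maps to a nonzero element. It is local because the extension is finite, hence integral, so the maximal ideal contracts to the maximal ideal.

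\textbf{Step 3: pass to an algebraically closed residue field.} Let $k=\overline{k_0}$ and compose with $k_0[[t]]\to k[[t]]$. This map is injective and local, so the image of $u_s$ stays nonzero and the deformation parameters land in $tB$.

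The only step needing care is Step 1, producing a dimension-one prime that avoids $\bar u_s$. The system-of-parameters argument handles it. An alternative is prime avoidance applied to the finitely many minimal primes over $(\bar u_s)$ while descending through chains.
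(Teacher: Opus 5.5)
Your proof is correct and follows essentially the same route as the paper. Both arguments extend $\bar u_s$ to a system of parameters, take a minimal prime $\mathfrak q$ of the remaining parameters to obtain a one-dimensional complete local domain in which $\bar u_s\neq 0$, normalize it (finite by excellence) to get $k_0[[t]]$, and then enlarge the residue field. Your extra remarks (the explicit check that $\dim S\geq 1$, and the observation that the normalization is already local because $S/\mathfrak q$ is henselian) only make explicit what the paper's ``choose a local branch'' leaves implicit.
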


\begin{proof}
Put $S=A_n/P$, $x=u_s\bmod P$, and $d=\dim S$. By \eqref{eq:height-condition}, $x$ is a nonzero nonunit. Extend it to a system of parameters
\[
x,b_2,\ldots,b_d.
\]
Let $\mathfrak q$ be a minimal prime over $(b_2,\ldots,b_d)$. Krull's height theorem gives $\mathrm{ht}(\mathfrak q)\leq d-1$. If $x\in\mathfrak q$, then $\mathfrak q$ contains a full system of parameters and hence equals the maximal ideal, a contradiction. Thus $x\notin\mathfrak q$.

In the one-dimensional complete local domain $S/\mathfrak q$, the radical of $(x)$ is the maximal ideal. Since $S$ is excellent, the normalization of $S/\mathfrak q$ is finite. Choose a local branch of the normalization and complete it. This yields a complete discrete valuation ring $k'[[t]]$. After extending the residue field to an algebraic closure $k$, we obtain the desired map $S\to k[[t]]$. The image of $x$ remains nonzero, while locality puts every deformation parameter in $tB$.
\end{proof}

From now through the special-fiber argument fix such a curve and write
\[
K=k((t)),\qquad r=n-s,\qquad q=p^s,
\qquad u_n=1,
\qquad u_1=\cdots=u_{s-1}=0.
\]
For a tuple over a valued extension of $K$, let $v_t$ denote the minimum of the coordinate valuations; we use the same notation for points expressed in the chosen formal coordinates. Let $G/B$ denote the height-$n$ $p$-divisible group obtained by restricting the universal deformation, and let $\widehat G$ be its one-dimensional formal completion. Over $K$ the identity component has height $s$, so there is a connected--\'etale sequence
\begin{equation}\label{eq:ce}
0\longrightarrow C\xrightarrow{i}G_K\xrightarrow{\pi_{\mathrm{et}}}E\longrightarrow0,
\qquad \htp(C)=s,
\qquad \htp(E)=r.
\end{equation}
No splitting of \eqref{eq:ce} is chosen.

The curve is used only after applying the stabilizer action: if $f\in P$ and $g(P)=P$, then $g(f)\in P$ in the original deformation ring, and only then do we restrict to the curve. Thus identities such as $f(gu)=0$ do not require the chosen curve to be stabilizer invariant.

\section{Cartier normal forms and the connected--\'etale extension group}

\subsection{Cartier structure equations}

We use the covariant $p$-typical Cartier module. Let $M$ be the Cartier module of $\widehat G$. It has a $V$-basis $e$ and structure equation
\begin{equation}\label{eq:D}
De=0,
\qquad
D=F-V^{n-1}-\sum_{i=s}^{n-1}V^{i-1}[u_i].
\end{equation}
The standard Cartier relations in characteristic $p$ are
\begin{equation}\label{eq:cartier-rel}
F[a]=[a^p]F,
\qquad
[a]V=V[a^p],
\qquad
FV=VF=p.
\end{equation}
Here $[a]$ is the Teichmuller Cartier operator; it is not additive in $a$. Every typical curve has a unique expansion
\[
\sum_{m\geq0}V^m[a_m]e.
\]
For an ideal $I\subset B$, let $M(I)$ be the set of curves all of whose typical coordinates lie in $I$. On $M(tB)$ use the uniform valuation
\begin{equation}\label{eq:uniform-valuation}
\ord\left(\sum_{m\geq0}V^m[a_m]e\right)=\inf_m v_t(a_m).
\end{equation}
This is stronger than coordinatewise convergence and is the topology used in the tail estimates below. When a Cartier curve is included in a product of variables, we use $v_t$ for the corresponding product valuation; for example,
\[
v_t(A,d)=\min\{\ord(A),v_t(d)\}.
\]

We use Zink's Cartier theory \cite{Zink}: the equivalence between smooth commutative formal groups and $V$-reduced Cartier modules, the structure-equation theorem and its completed base-change compatibility, and the fact that each fixed output coordinate of the Cartier operations is an integral polynomial in finitely many input coordinates. More precisely, we use Theorems 4.23, 4.39, and 4.43, together with the standard relations of Chapter IV and Lemma 4.19 for the unique $V$-expansion.

\subsection{Normal form and the formal group $\cQ$}

\begin{lemma}[Normal form]\label{lem:normal-form}
Every $z\in M(tB)$ admits a unique decomposition
\begin{equation}\label{eq:normal-form}
z=DA+s_{\cQ}(b),
\qquad
s_{\cQ}(b)=\sum_{j=0}^{n-2}V^j[b_j]e,
\end{equation}
with $A\in M(tB)$ and $b=(b_0,\ldots,b_{n-2})\in(tB)^{n-1}$. The decomposition preserves coordinate ideals. Moreover the normal-form coordinates represent a smooth commutative formal group
\[
\cQ=\Spf B[[b_0,\ldots,b_{n-2}]],
\]
for which
\[
\rho:M(tB)\longrightarrow\cQ(tB),\qquad
\ker\rho=DM(tB),
\]
is additive.
\end{lemma}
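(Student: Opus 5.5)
The plan is to construct the decomposition $z=DA+s_{\cQ}(b)$ by successive approximation in the $V$-adic filtration, combined with $t$-adic convergence. The key observation is the shape of $D=F-V^{n-1}-\sum_{i=s}^{n-1}V^{i-1}[u_i]$ from \eqref{eq:D}. For $A=V^m[a]e$ with $m\ge0$, the relations \eqref{eq:cartier-rel} give
\[
DA = FV^m[a]e - V^{m+n-1}[a]e-\textstyle\sum_i V^{m+i-1}[u_i][a]e .
\]
Here $FV^m=pV^{m-1}=V^{m-1}\cdot VF$, and $F e$ is rewritten using $De=0$, so $Fe=V^{n-1}e+\sum_i V^{i-1}[u_i]e$. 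The upshot is that, modulo higher $V$-terms and terms of strictly larger $t$-valuation, $D(V^m[a]e)$ has leading term $-V^{m+n-1}[a]e$. The $u_i$-terms have $t$-valuation raised by $v_t(u_i)>0$, while the $F$-contribution either lands in higher $V$-degree or carries a $p$, which vanishes in characteristic $p$ after re-expansion.

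\textbf{Step 1 (triangularity).} I would show that the map
\[
(A,b)\mapsto DA+s_{\cQ}(b),\qquad M(tB)\times(tB)^{n-1}\to M(tB),
\]
is ``unitriangular'' with respect to the bigraded filtration by $V$-degree and $t$-order. The coordinate in $V$-degree $j\le n-2$ is governed by $b_j$ plus corrections of higher $t$-order. The coordinate in degree $m+n-1$ is governed by $-a_m$ plus corrections that are either of higher $t$-order or depend only on the $a_{m'}$ with $m'<m$ and on the $b$'s. I would make this precise using Zink's fact (recalled above) that each output coordinate is an integral polynomial in finitely many input coordinates. Writing the $j$-th coordinate of $DA+s_{\cQ}(b)$ as $\pm c_j + P_j(\ldots)$, where $c_j$ is $b_j$ or $a_{j-n+1}$, I need every monomial of $P_j$ either to involve a $u_i$ or to have total degree $\ge2$ in inputs from $tB$, or to involve only earlier coordinates.

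\textbf{Step 2 (solve).} I would solve coordinate by coordinate for a given $z$: first $b_0,\dots,b_{n-2}$, then $a_0,a_1,\dots$. At each stage the corrections are determined modulo $t^{N+1}$ by data known modulo $t^N$. An iteration $\Phi(A,b)$ is then a contraction for the uniform valuation \eqref{eq:uniform-valuation}, and completeness of $B=k[[t]]$ gives existence and uniqueness. Running the same solution with coefficients in an arbitrary ideal $I\subset tB$ shows that the decomposition preserves coordinate ideals.

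\textbf{Step 3 (the group $\cQ$).} Uniqueness shows that $b$ depends functorially on $z$. Because the solution is given by universal integral power series in the coordinates of $z$, base change from the universal case shows that $b$ is represented by $\Spf B[[b_0,\dots,b_{n-2}]]$. I would define the group law by
\[
b\oplus b'=\rho\bigl(s_{\cQ}(b)+s_{\cQ}(b')\bigr),
\]
with $\rho(z)=b$. It is additive because $D$ is additive: if $z=DA+s(b)$ and $z'=DA'+s(b')$, then
\[
z+z'=D(A+A')+s(b)+s(b'),
\]
and normal-forming the last sum gives $\rho(z+z')=b\oplus b'$. The kernel of $\rho$ is exactly $DM(tB)$ by uniqueness. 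Associativity and commutativity are inherited from $M$, and smoothness holds because the coordinates are free power series. Equivalently, $\cQ$ corresponds to the $V$-reduced Cartier module $M/DM$ via Zink's Theorem 4.23.

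The main obstacle is Step 1. The operator $F$ and the Teichmüller non-additivity produce cross terms, namely Witt-type sum polynomials, whose $V$-degree and $t$-order must be controlled simultaneously. In particular, $F$ applied to $V^0[a]e$ does not lower $V$-degree cleanly and must be handled through the structure equation. I would first try to prove the estimate
\[
\ord\bigl(DA+V^{n-1}\textstyle\sum_m V^m[a_m]e\bigr)>\ord(A)
\]
modulo the $V$-degrees already fixed. This would reduce everything to a Newton-type iteration.
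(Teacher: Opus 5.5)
Your proposal is correct and is essentially the paper's proof. Both write $D=-V^{n-1}+D_+$, show that $D_+$ strictly raises the uniform $t$-adic valuation, kill tails by successive approximation, and define the law on $\cQ$ by normal-forming $s_{\cQ}(b)+s_{\cQ}(b')$, so that uniqueness transports the group axioms and gives $\ker\rho=DM(tB)$.

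Two asides are wrong, though neither is load-bearing. First, for $m\ge1$ the term $FV^m[a]e=pV^{m-1}[a]e$ does not vanish; it equals $\sum_{i=s}^{n}V^{m-1+i}[u_ia^{p^i}]e$ (with $u_n=1$), which is harmless only because its coefficients have higher $t$-valuation. Second, $M/DM$ is not a Cartier module, since $DM$ is not $V$-stable in general, so Zink's equivalence cannot be used to produce $\cQ$; its group structure comes solely from the normal-form series.
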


\begin{proof}
Write
\[
D=-V^{n-1}+D_+.
\]
If $A\in M(t^aB)$ with $a\geq1$, then $D_+A\in M(t^{a+1}B)$. Indeed the $F$-term raises each input coordinate at least to its $p$th power, while
\[
V^{i-1}[u_i]V^m[a_m]e
\]
has coefficient $u_i^{p^m}a_m$, hence valuation at least $a+1$. Witt carries preserve the relevant coordinate ideals.

Starting with $z$, use the term $-V^{n-1}A$ to kill the tail beginning in coordinate $n-1$. The resulting error has strictly larger uniform $t$-adic valuation. Iterating produces a Cauchy sequence in the complete uniform coordinate space and hence a decomposition \eqref{eq:normal-form}.

For uniqueness, suppose a low-coordinate representative equals $DA$. At the smallest nonzero valuation of $A$, the operator $D$ is congruent to $-V^{n-1}$; thus the leading nonzero output of $DA$ begins at coordinate $n-1$, whereas a nonzero low representative has a leading coordinate at most $n-2$. Nonlinear carries have strictly larger valuation and cannot cancel this discrepancy. Hence both sides vanish.

To obtain a formal group, not merely a rule on $B$-points, solve the normal-form equations in finitely many formal variables. The linearized tail equations have the form
\[
c_{m+n-1}=-a_m-\sum_{i=s}^{n-1}u_i^{p^{m+n-i}}a_{m+n-i}.
\]
This is minus the identity plus a strictly $t$-adically contracting operator on coefficient sequences, hence has an integral Neumann inverse. Recursive solution in total degree gives formal power series over $B$. Define
\begin{equation}\label{eq:Q-law}
b+_{\cQ}b'=\rho\bigl(s_{\cQ}(b)+s_{\cQ}(b')\bigr).
\end{equation}
Uniqueness of normal form transports associativity, commutativity, the identity, and inversion from the Cartier module. The linear term of \eqref{eq:Q-law} is $b+b'$, so $\cQ$ is smooth of dimension $n-1$, and $\rho$ is additive.
\end{proof}

The preceding proof was phrased on $B$-valued positive points because that is the form used later in the valuation estimates.  We record the corresponding functorial statement, since it is what makes the quotient a formal group rather than only an abstract group of points.

\begin{lemma}[Functorial normal form and representability]\label{lem:functorial-normal-form}
Let $R$ be a complete local $B$-algebra and let $I\subset R$ be a topologically nilpotent ideal.  After base change of the Cartier structure equation to $R$, every element of $M_R(I)$ admits a unique decomposition
\[
 z=DA+s_{\cQ}(b),\qquad A\in M_R(I),\quad b\in I^{n-1}.
\]
The construction is compatible with continuous homomorphisms of pairs $(R,I)\to(R',I')$.  Consequently the functor
\[
 (R,I)\longmapsto M_R(I)/DM_R(I)
\]
is represented by $\Spf B[[b_0,\ldots,b_{n-2}]]$, and the group law is given by the formal series obtained from normal form.
\end{lemma}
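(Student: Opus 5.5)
The plan is to repeat the proof of \cref{lem:normal-form}, with the pair $(R,I)$ in place of $(B,tB)$, and to replace $t$-adic valuation by the filtration by powers of $I$. Work with the ideal $I'=I+tR$. The image of $t$ is topologically nilpotent in the complete local $B$-algebra $R$, and every $u_i$ lies in $tB$, so $I'$ is again topologically nilpotent. For $a\geq1$, let $M_R(I^a)$ denote the curves whose typical coordinates lie in $I^a$. The first step is the contraction estimate: if $A\in M_R(I^a)$, then $D_+A\in M_R(I^{a+1})$. As before, the $F$-term raises each coordinate to its $p$th power, so it lands in $I^{pa}\subset I^{a+1}$. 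A term $V^{i-1}[u_i]V^m[a_m]e$ has coefficient $u_i^{p^m}a_m$. Here one must be careful: $u_i\in tR$ need not lie in $I$. I would therefore filter by $I_N=I^{N}+t^NI$, or more simply by the ideals $I\cdot(I')^{N}$. Each of these is contained in $I$, is stable under the Witt carries (which are integral polynomials without constant term, by \cite[Lemma 4.19]{Zink}), and is raised by both $F$ and multiplication by $u_i$. With this filtration the iteration "kill the tail from coordinate $n-1$ using $-V^{n-1}A$" produces a sequence that is Cauchy for the $I'$-adic topology, since the error lies in $M_R(I\cdot(I')^{N})$. The sequence converges because $R$ is complete. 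All the corrections $A$ have coordinates in $I$, so the decomposition exists with $A\in M_R(I)$ and $b\in I^{n-1}$.

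For uniqueness, suppose $DA=s_{\cQ}(b)$ with $A\neq0$. Let $N$ be maximal with $A\in M_R(I(I')^N)$; such an $N$ exists because $\bigcap_N I(I')^N=0$ by Krull, $R$ being complete local Noetherian. I expect this to be the main obstacle, since for general complete $R$ one may need separatedness of the filtration, and I would add a Noetherian or separated hypothesis if necessary. Modulo the next filtration step, $DA\equiv -V^{n-1}A$. So $DA$ has a nonzero coordinate in position at least $n-1$ of filtration exactly $N$, while $s_{\cQ}(b)$ has no such coordinates. Nonlinear carries lie deeper in the filtration. This contradiction forces $A=0$ and hence $b=0$, exactly as in \cref{lem:normal-form}. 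One subtle point is that "exact filtration $N$" must survive passage to the graded piece. I would handle this by working in the associated graded module, where $-V^{n-1}$ acts as a shift and so is injective.

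Next comes functoriality. A continuous map of pairs $(R,I)\to(R',I')$ carries $D_R$ to $D_{R'}$, because the structure equation is defined over $B$ and the base-change compatibility of \cite[Theorem 4.39]{Zink} applies. It also carries the filtration into the filtration and commutes with $s_{\cQ}$. Applying it to a decomposition therefore gives a decomposition over $R'$, and by uniqueness this is the normal form there. For representability, take the universal pair $R_{\mathrm{univ}}=B[[x_{m}]]$, truncated to finitely many coordinates as in \cref{lem:normal-form}, with $I$ the ideal generated by the variables. The recursive Neumann-inverse solution shows that $b$ and the finitely many needed coordinates of $A$ are formal power series over $B$ in the coordinates of $z$.

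Specializing along $x_m\mapsto z_m$ and using functoriality, we get a natural bijection $M_R(I)/DM_R(I)\cong I^{n-1}=\Hom_{\mathrm{cont}}(B[[b_0,\ldots,b_{n-2}]],R)$ with $b_j\mapsto I$. The group law \eqref{eq:Q-law} is then obtained by applying normal form to $s_{\cQ}(b)+s_{\cQ}(b')$ over $B[[b,b']]$. This is natural in $(R,I)$ by the same functoriality, which completes the representability statement.
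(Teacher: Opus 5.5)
Your argument is correct in substance, but it is organized differently from the paper's.

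\textbf{The two routes.} The paper first takes $I$ nilpotent. It asserts that the tail-killing recursion terminates modulo each $I^N$ because every occurrence of $D_+$ ``gains at least one factor from $I$.'' It then passes to $\varprojlim R/I^N$, with functoriality coming from the universality of the integral recursion formulas. You instead run one convergent iteration directly on $R$, filtered by $I\cdot(I+tR)^N$, and get uniqueness from a leading-term comparison plus Krull's intersection theorem.

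Your version is in fact the more accurate one. The summands $V^{i-1}[u_i]$ of $D_+$ gain a factor $u_i\in tR$, not a factor from $I$. They also put the new error in positions $m+s-1,\ldots,m+n-2$, which come before the position $m+n-1$ being killed. When $I^2=0$, for instance, the tail equations are $a_m=-z_{m+n-1}-\sum_{i=s}^{n-1}u_i^{p^{m+n-i}}a_{m+n-i}$. These never terminate; they are solved by a $t$-adically convergent Neumann series. The paper's reduction can be rescued by using that each $R/I^N$ is $t$-adically complete, but your mixed filtration handles the $I$-adic and $t$-adic directions at once.

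The paper's organization isolates functoriality as compatibility of universal integral formulas over nilpotent test rings, matching Zink's framework. Yours gives a self-contained convergence argument on complete Noetherian $R$. It costs the separatedness hypothesis you flag, which the paper's limit step also uses tacitly via $R=\varprojlim R/I^N$.

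\textbf{Points to tighten.}

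First, drop the candidate filtration $I^N+t^NI$. Multiplication by $u_i$ sends $I^N$ into $tI^N$, which need not lie in $I^{N+1}+t^{N+1}I$. Only $I\cdot(I+tR)^N$ is raised both by $F$ and by the $u_i$, and preserved by Witt carries.

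Second, uniqueness does not reduce to the case $DA=s_{\cQ}(b)$. The difference $s_{\cQ}(b')-s_{\cQ}(b)$ is not of the form $s_{\cQ}(\cdot)$, because Witt carries spill into positions $\ge n-1$. Apply your leading-term argument to the pair $(A-A',\,b'-b)$ instead:
\begin{itemize}
\item The coordinates of $s_{\cQ}(b')-s_{\cQ}(b)$ in positions $\ge n-1$, and its low coordinates minus those of $b'-b$, are integral polynomials vanishing at $b=0$ and at $b'=b$.
\item Hence they lie in $(b)(b'-b)$, one filtration step deeper than $b'-b$.
\item Comparing with $D(A-A')\equiv -V^{n-1}(A-A')$ modulo the next filtration step pushes both $A-A'$ and $b'-b$ one step deeper, contradicting the maximality of $N$.
\end{itemize}
The same slip occurs in the uniqueness part of \cref{lem:normal-form}.

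Finally, your universal pair $B[[x_m]]$ is superfluous. Uniqueness and functoriality already give the natural bijection $M_R(I)/DM_R(I)\cong I^{n-1}$. Only the pair $(B[[b,b']],(b,b'))$ is needed to produce the group-law series, as in the paper.
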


\begin{proof}
First assume that $I$ is nilpotent.  The tail-killing argument of Lemma \ref{lem:normal-form} becomes finite modulo every power of $I$: the term $-V^{n-1}A$ removes the first non-normal coordinate, while every occurrence of $D_+$ gains at least one factor from $I$.  Thus the recursion terminates modulo $I^N$ for every $N$, and gives a unique pair $(A,b)$ modulo $I^N$.  The formulas used in the recursion are universal integral polynomials in finitely many input coordinates, so they commute with homomorphisms of coefficient rings.

For a complete topologically nilpotent pair $(R,I)$, apply the preceding construction to $R/I^N$ and pass to the inverse limit.  The resulting coefficient sequences converge in the $I$-adic topology, and uniqueness at each finite level gives uniqueness in the limit.  This proves the first assertion and functoriality.

Now take universal variables $x=(x_0,\ldots,x_{n-2})$ and $y=(y_0,\ldots,y_{n-2})$ in the augmentation ideal of
\[
 B[[x,y]].
\]
Apply functorial normal form to
\[
 s_{\cQ}(x)+s_{\cQ}(y).
\]
Its low representative is a tuple
\[
 F_{\cQ}(x,y)=\bigl(F_0(x,y),\ldots,F_{n-2}(x,y)\bigr)
 \in (x,y)B[[x,y]]^{n-1}.
\]
By construction, evaluating $F_{\cQ}$ in any complete test algebra gives the quotient sum.  Associativity, commutativity, identity, and inverse follow after pulling back to the additive Cartier module and using uniqueness of normal form.  Thus these universal series define a commutative formal group law on
\[
 \Spf B[[b_0,\ldots,b_{n-2}]].
\]
Finally the linearized normal-form equations show
\[
 F_{\cQ}(x,y)\equiv x+y\pmod{(x,y)^2},
\]
so the formal group is smooth of relative dimension $n-1$.
\end{proof}

\begin{remark}\label{rem:Q-quotient}
It is convenient to write symbolically
\[
 \cQ\simeq M/DM.
\]
This notation means the represented quotient functor of Lemma \ref{lem:functorial-normal-form}; it is not a naive quotient of ordinary $B$-modules.  The Teichmuller operators are not additive in their coefficients, and the formal group law on the coordinates $b_j$ is generally not coordinatewise addition.
\end{remark}

\subsection{The distinguished subgroup $\cH$}

\begin{definition}\label{def:H}
Let $\cH\subset\cQ$ be the closed formal subscheme
\[
b_0=\cdots=b_{s-2}=0.
\]
Thus $\cH$ has dimension $r=n-s$ and coordinates
\[
v=(v_0,\ldots,v_{r-1}),
\qquad
s_{\cH}(v)=\sum_{j=0}^{r-1}V^{s-1+j}[v_j]e.
\]
\end{definition}

\begin{lemma}\label{lem:H-subgroup}
The formal scheme $\cH$ is a smooth formal subgroup of $\cQ$. Moreover the image of $[p]_{\cQ}$ is contained in $\cH$, and every coordinate of the $p$-series of $\cQ$ has total degree at least $q=p^s$. Consequently, for $x\in\cQ(tB)$,
\begin{equation}\label{eq:p-growth}
v_t([p]x)\geq q\,v_t(x).
\end{equation}
\end{lemma}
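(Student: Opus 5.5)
The plan is to work entirely on the Cartier side, using the represented quotient of Lemma~\ref{lem:functorial-normal-form}: a point of $\cQ$ is the class $\rho(z)$ of a curve $z$ modulo $DM$, and $\cH$ is the set of classes represented by curves supported in coordinates $V^{s-1},\ldots,V^{n-2}$. I will first prove subgroup closure. It suffices to show that the normal form of a curve supported in $V$-degrees $\geq s-1$ again has $b_0=\cdots=b_{s-2}=0$. Apply this to $s_{\cH}(v)+s_{\cH}(v')$ and to $-s_{\cH}(v)$; smoothness then follows because $\cH$ is a coordinate subspace and the linear term of the group law is $b+b'$.

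For the degree claim I will use the tail-killing recursion of Lemma~\ref{lem:normal-form}. Start from $z\in M(t^aB)$ supported in degrees $\geq s-1$ (Witt carries of a sum of $V^j$-terms with $j\geq s-1$ stay in degrees $\geq s-1$). Choose the correction $A$ with $V^{n-1}A$ matching the tail, so $A$ is supported in degrees $\geq 0$. The new error is $D_+A$, built from $FA$ and $V^{i-1}[u_i]A$ with $i\geq s$. The $V$-terms land in degrees $\geq s-1$ automatically. The $F$-term is the dangerous one, since it lowers the $V$-degree by one. To handle it, I will track the finer invariant ``degree $\geq s-1$ except for terms that are divisible by $F$-images of tail coordinates,'' and check that $FA$ only contributes in degrees $\geq n-2\geq s-1$ when $A$ is supported in degree $\geq 0$. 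Concretely, $FV^{m}[a]e=pV^{m-1}[a]e$, and for $m=0$ one gets $F[a]e=[a^p]Fe$ with $Fe=V^{n-1}e+\sum V^{i-1}[u_i]e$ via the structure equation $De=0$. All of these have degree $\geq s-1$. Iterating and passing to the limit, the normal form lies in $\cH$. This same computation, done functorially over universal variables, shows $\cH$ is a closed formal subgroup.

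Next I will treat $[p]_{\cQ}$. For $s_{\cQ}(b)=\sum_{j\le n-2} V^j[b_j]e$, multiplication by $p=VF$ gives $V F V^j[b_j]e$. For $j\geq1$ this equals $V^{j}\,p[b_j]\ldots$, and in characteristic $p$ I will use $p=VF$ together with $F[b]=[b^p]F$ to rewrite
\[
p\cdot V^j[b_j]e=V^{j+1}[b_j^p]Fe .
\]
Substituting $Fe=V^{n-1}e+\sum_{i\ge s}V^{i-1}[u_i]e$ gives terms $V^{j+i}[u_i^{p^{j+1}}b_j^p]e$ with $i\ge s$. These have $V$-degree $\geq s$, and after normal form the class lies in $\cH$ by the first step.

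The main obstacle is the degree bound $q=p^s$, since a naive count only yields degree $p$. To get it, I will iterate. The coefficient $b_j^p$ landing in degree $j+i$ is multiplied by $u_i^{p^{j+1}}$, which has positive valuation but no degree in $b$. I therefore cannot simply count powers of $u$; instead I must show the $b$-degree grows. My first attempt is to use that for a height-$s$ generic fiber the relevant operator is effectively $F^s$ up to units: reduce modulo the ideal generated by terms of $b$-degree $<q$ and show $[p]$ vanishes there. This means showing the equation $p=V^{s}\cdot(\text{unit})\cdot F^{s}$ holds on $M/DM$ modulo $u_s$-divisible corrections, arguing by induction on $s$ using $u_1=\cdots=u_{s-1}=0$. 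If that fails, the fallback is to check the degree bound on the linear-in-$u$ truncation and then control the nonlinear Witt carries, which only increase degree. The estimate \eqref{eq:p-growth} then follows immediately, since every coordinate series has degree $\geq q$ and coefficients in $B$.
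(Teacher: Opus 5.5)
Your treatment of the subgroup property and of the factorization of $[p]_{\cQ}$ through $\cH$ is fine, and it is essentially the paper's argument: everything reduces to $DM\subset V^{s-1}M$ and $pM\subset V^{s}M$. There is one small slip that does no harm: $F[a_0]e=[a_0^p]Fe$ already contributes in $V$-degree $s-1$, not $\ge n-2$. But $s-1$ is all you need.

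The genuine gap is the degree bound, and it comes from a commutation error. Your identity $pV^j[b_j]e=V^{j+1}[b_j^p]Fe$ is correct. When you substitute $Fe$, however, you must move $[b_j^p]$ past $V^{i-1}$ using $[a]V=V[a^p]$, which gives $[b_j^p]V^{i-1}=V^{i-1}[b_j^{p^i}]$. Hence
\[
pV^j[b]e=\sum_{i=s}^{n}V^{j+i}\bigl[u_i\,b^{p^i}\bigr]e,\qquad u_n=1,
\]
and not a sum of terms $V^{j+i}[u_i^{p^{j+1}}b_j^p]e$. For instance, the $i=n$ term is $V^{j+n}[b_j^{p^n}]e$, not $V^{j+n}[b_j^p]e$.

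With the correct identity your ``main obstacle'' disappears:
\begin{itemize}
\item every typical coordinate of $p\cdot s_{\cQ}(b)=\sum_j pV^j[b_j]e$ lies in $(b)^{p^s}$;
\item the Witt carries of this sum, and the normal-form map $\rho$, preserve coordinate ideals (Lemma~\ref{lem:normal-form});
\item so every coordinate of $[p]_{\cQ}(b)=\rho(p\cdot s_{\cQ}(b))$ lies in $(b)^{q}B[[b]]$, and \eqref{eq:p-growth} follows at once.
\end{itemize}

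As written, your proposal instead rests the degree claim on plans you do not carry out. One is an operator identity $p=V^s\cdot(\text{unit})\cdot F^s$ on $M/DM$ modulo $u_s$-divisible corrections; the other is a fallback on Witt carries. The first could not work in any case. Discarding $u_s$-divisible terms tells you nothing about the $b$-degree of the coefficients over $B$, which is what the lemma asserts. So your argument does not establish the degree-$q$ statement, nor therefore \eqref{eq:p-growth}. The fix is the one-line corrected commutation above.
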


\begin{proof}
The structure equation implies $DM\subset V^{s-1}M$. Therefore the image in normal form of $V^{s-1}M(tB)$ is exactly the coordinate subspace defining $\cH$. Since $V^{s-1}M$ is additive and $\rho$ is a homomorphism, $\cH$ is a formal subgroup.

Using \eqref{eq:cartier-rel} and \eqref{eq:D}, one obtains the exact identity
\begin{equation}\label{eq:pVm}
pV^m[a]e
=
\sum_{i=s}^{n}V^{m+i}[u_i a^{p^i}]e,
\qquad u_n=1.
\end{equation}
Hence $pM\subset V^sM\subset V^{s-1}M$, so $[p]_{\cQ}$ factors through $\cH$. Formula \eqref{eq:pVm}, together with the integral normal-form law, also shows that no monomial of total degree below $p^s$ occurs in the $p$-series. This gives \eqref{eq:p-growth}.
\end{proof}

\begin{remark}[Scheme-theoretic subgroup statement]\label{rem:H-hopf}
The assertion that $\cH$ is a subgroup is stronger than closure of the set $\cH(tB)$.  By Lemma \ref{lem:functorial-normal-form}, for every nilpotent test ideal $I$ one has
\[
 \cH(I)=\rho\bigl(V^{s-1}M(I)\bigr).
\]
Since $V^{s-1}M(I)$ is an additive subgroup of $M(I)$ and $\rho$ is functorially additive, this identity is stable under arbitrary base change of test rings.  Equivalently, if
\[
 J_{\cH}=(b_0,\ldots,b_{s-2})\subset B[[b_0,\ldots,b_{n-2}]],
\]
then the comultiplication of $\cQ$ satisfies
\[
 \Delta(J_{\cH})\subset
 J_{\cH}\widehat\otimes 1+1\widehat\otimes J_{\cH},
\]
and the inverse carries $J_{\cH}$ into itself.  Thus $J_{\cH}$ is a Hopf ideal and $\cH$ is a closed formal subgroup scheme.

The factorization of $[p]_{\cQ}$ through $\cH$ is likewise a statement of formal maps.  Formula \eqref{eq:pVm} shows it after evaluation on every nilpotent test ideal, and representability upgrades this pointwise statement to a factorization
\[
 \cQ\xrightarrow{[p]}\cH\hookrightarrow\cQ.
\]
\end{remark}

\subsection{Extensions with fixed endpoints}

For $p$-divisible groups $X,Y$ over a characteristic-$p$ field $L$, let $\DE_L(X,Y)$ denote the formal deformation functor which assigns to an augmented Artinian local $L$-algebra $S$ the extensions
\[
0\to Y_S\to Z\to X_S\to0
\]
with fixed endpoints and a specified identification of the reduction with the split extension. Baer sum gives a commutative formal-group law. Chai proves \cite[Definition 2.2 and Propositions 2.3 and 2.9]{ChaiCanonical} that this functor is smooth of dimension
\begin{equation}\label{eq:DE-dim}
\dim(Y)(\htp(X)-\dim(X))
\end{equation}
and has tangent space
\begin{equation}\label{eq:DE-tangent}
T_0\DE_L(X,Y)=\Hom_L(\omega_{X^\vee},\Lie Y).
\end{equation}
If $X$ is \'etale and $Y$ is connected, there is a canonical isomorphism
\begin{equation}\label{eq:DE-hom}
\DE_L(X,Y)\simeq \Hom_{\Zp}(T_pX,Y).
\end{equation}

\begin{lemma}[The universal block extension at all $p$-power levels]\label{lem:block-all-level}
Over $B[[b_0,\ldots,b_{n-2}]]$, the equations
\[
 De_1=0,\qquad De_2=s_{\cQ}(b)e_1
\]
construct an extension of height-$n$ $p$-divisible groups
\[
 0\longrightarrow G\longrightarrow Z_b\longrightarrow G\longrightarrow0
\]
with specified split reduction at $b=0$.  Its formation is compatible with base change and with Baer sum.  Changing the lift of the quotient generator by $e_2\mapsto e_2-Ae_1$ changes the source term by $-DA$.
\end{lemma}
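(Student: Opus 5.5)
We build $Z_b$ through Zink's theory, as the Cartier module of a formal group, and then extend it to the full $p$-divisible group. Over $R=B[[b]]$, let $N$ be the $V$-reduced Cartier module with $V$-basis $e_1,e_2$ and structure equations
\[
 De_1=0,\qquad De_2=s_{\cQ}(b)e_1 .
\]
Here $s_{\cQ}(b)e_1$ means the curve $\sum_{j\le n-2}V^j[b_j]e_1$ in the submodule generated by $e_1$. The structure matrix is block upper triangular with diagonal blocks equal to the structure operator of $M$. Its reduction therefore has the same invertibility property that Zink's structure-equation theorem \cite[Theorems 4.23, 4.39]{Zink} requires; this amounts to checking that the leading term $-V^{n-1}$ of $D$ is unchanged on both diagonal blocks. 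Hence $N$ is the Cartier module of a smooth formal group $\widehat Z_b$. The submodule generated by $e_1$ is a Cartier submodule isomorphic to $M$. The quotient by it is presented by $De_2\equiv 0$, so it is again $M$. Exactness of Zink's equivalence gives $0\to\widehat G\to\widehat Z_b\to\widehat G\to0$. At $b=0$ the equations decouple, which gives the specified splitting.

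Next we pass from formal groups to $p$-divisible groups of height $n$ at all $p$-power levels. Over the closed point, and after reduction to $B$, $G$ is a $p$-divisible group whose formal completion has height $n$ over $k$, so $G$ is connected at the special point. The same then holds for $Z_b$: its formal group is an extension of two formal groups of finite height $n$ over the Artinian truncations, and multiplication by $p$ is an isogeny on both. A formal group over a complete local ring whose reduction has finite height is a connected $p$-divisible group, namely $\varinjlim \widehat Z_b[p^m]$. The finite flat levels $Z_b[p^m]$ fit into exact sequences $0\to G[p^m]\to Z_b[p^m]\to G[p^m]\to0$. To see this, apply the snake lemma to $p^m$ on the formal sequence, using that $p^m$ is an epimorphism of fppf sheaves on $\widehat G$. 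We read the statement's ``height $n$'' as applying to each endpoint, with $Z_b$ of height $2n$. This matches the identification $Z_b[p^m]$ above.

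Functoriality comes from the relation-level description. Base change along $R\to R'$ sends the structure equations to the same equations with $b$ replaced by its image, and Zink's completed base-change compatibility \cite[Theorem 4.43]{Zink} identifies the resulting module. For Baer sum, form the pullback-pushout of $N_b$ and $N_{b'}$: the fibre product over the quotient $M$, modulo the antidiagonal copy of $M$. This has $V$-basis $e_1,e_2+e_2'$ with $D(e_2+e_2')=(s_{\cQ}(b)+s_{\cQ}(b'))e_1$. By Lemma~\ref{lem:functorial-normal-form}, write $s_{\cQ}(b)+s_{\cQ}(b')=DA+s_{\cQ}(b+_{\cQ}b')$. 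The final claim then converts this into the presentation for $b+_{\cQ}b'$. That claim is a direct computation: with $e_2'=e_2-Ae_1$ and $De_1=0$, linearity of $D$ gives
\[
De_2'=De_2-D(Ae_1)=(s_{\cQ}(b)-DA)e_1 .
\]
Here $Ae_1$ is a curve in $Me_1$, and $D$ acts on it through the Cartier module structure.

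The main obstacle is the first step. One must verify that the block structure equation genuinely satisfies Zink's hypotheses, in particular the $V$-reducedness and nilpotence conditions over the non-Artinian base $B[[b]]$. One must also check that the kernel and quotient identifications hold as Cartier modules and not only coordinatewise. I would handle this by working modulo $(t,b)^N$, where every carry is finite, and then passing to the limit exactly as in Lemma~\ref{lem:functorial-normal-form}.
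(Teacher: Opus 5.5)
Your proposal is correct and is essentially the paper's proof: Zink's structure equations give the block Cartier module, the $e_1$-submodule and the $e_2$-quotient recover $M$, finite height of the closed fibre $H_n\times H_n$ gives the $p^m$-levels, base change is Zink's theorem, and the gauge identity $D(e_2-Ae_1)=(s_{\cQ}(b)-DA)e_1$ is the same computation. The deviations are minor. You get exactness of $0\to G[p^m]\to Z_b[p^m]\to G[p^m]\to0$ from the snake lemma for $[p^m]$, where the paper checks the closed fibre and uses finite flatness. You make the Baer sum explicit (fibre product over $M$ modulo the antidiagonal, then normal form), where the paper simply asserts that Baer sum adds source terms. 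Finally, the leading-term check you attach to Zink's structure-equation theorem is really the finite-height input needed for $p$-divisibility, not a hypothesis of that theorem.
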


\begin{proof}
Zink's structure-equation theorem first constructs a $V$-reduced Cartier module with $V$-basis $e_1,e_2$.  The Cartier equivalence identifies it with a smooth two-dimensional formal group whose closed fiber is $H_n\times H_n$.  The submodule generated by $e_1$ and the quotient generated by the image of $e_2$ both have the original structure equation $De=0$, giving the formal kernel and quotient.

To retain the full Barsotti--Tate structure, work over an Artinian quotient of the parameter ring.  For every $m\ge1$, the $p^m$-torsion of the resulting finite-height formal group is finite locally free of rank $p^{2nm}$.  The maps induced by the two generators identify the kernel and quotient with the corresponding $p^m$-torsion of $G$.  Exactness may be checked in the fppf topology: after passing to geometric fibers it is the evident exact sequence of the two height-$n$ factors, and finite flatness gives exactness over the base.  The transition maps in $m$ are compatible, so inverse limits of the finite free Hopf algebras produce an extension of full $p$-divisible groups.

The construction commutes with completed base change by Zink's base-change theorem.  At $b=0$ the source term vanishes, hence the extension is split, and the chosen generators specify that splitting.  Replacing $e_2$ by $e_2-Ae_1$ changes
\[
 D(e_2-Ae_1)=s_{\cQ}(b)e_1-(DA)e_1,
\]
which proves the final assertion.  Since Baer sum is obtained by adding the off-diagonal source terms, normal-form addition on $\cQ$ agrees with Baer sum.
\end{proof}

\begin{proposition}[Cartier quotient as an extension space]\label{prop:Q-DE}
There are natural isomorphisms of formal groups
\begin{equation}\label{eq:H-DE}
\cH_K\simeq\DE_K(E,C)\simeq\Hom_{\Zp}(T_pE,C).
\end{equation}
In particular $\cH_K$ is $p$-divisible of dimension $r$ and height $sr$, and after an algebraically closed extension it is isomorphic to $H_s^r$.
\end{proposition}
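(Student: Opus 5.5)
We will build the first isomorphism from the universal block extension of Lemma \ref{lem:block-all-level}, cut down to $\cH$, and deduce the second from \eqref{eq:DE-hom}. Restrict $Z_b$ to $\cH$, i.e.\ to source terms $s_{\cH}(v)e_1\in V^{s-1}M$, and base change to $K$. Over $K$ we have the connected--\'etale sequence \eqref{eq:ce} for both the kernel and the quotient copy of $G_K$. We then pull $Z_b$ back along $\pi_{\mathrm{et}}$ on the quotient side and push it out along $G_K\to E$ on the kernel side. This gives an extension class of $G_K$ by $G_K$. We claim it is induced from an extension of $E$ by $C$, through the composite $C\to G_K$, $G_K\to E$.

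Concretely, we will show the following for a source term lying in $V^{s-1}M$. The pushout to $E$ is canonically split, because the image of $V^{s-1}M$ in the étale part of the Dieudonné module vanishes. This uses that $V$ is bijective on the étale part and that the structure equation forces $De=0$ to give slope-zero contributions only beyond index $s-1$. Dually, the pullback along $C\hookrightarrow G_K$ is split. The resulting class therefore lives in the ``off-diagonal'' group $\DE_K(E,C)$. By Lemma \ref{lem:block-all-level} this assignment is compatible with base change and Baer sum, and it kills $DA$. It therefore defines a homomorphism of formal groups $\phi:\cH_K\to\DE_K(E,C)$.

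To see that $\phi$ is an isomorphism, it suffices to use the following facts. Both sides are smooth formal groups of the same dimension: $\dim\cH=r$, and by \eqref{eq:DE-dim} $\DE_K(E,C)$ has dimension $\dim(C)(\htp(E)-0)=r$. So it is enough to check that $d\phi$ is injective on tangent spaces. We compute $d\phi$ on first-order source terms $\epsilon V^{s-1+j}[v_j]e$. Using \eqref{eq:DE-tangent}, the target tangent space is $\Hom_K(\omega_{E^\vee},\Lie C)$, and we must show the $r$ classes are linearly independent there. We would identify $\omega_{E^\vee}$ with the étale quotient of $M/VM$-type data, matching the coordinates $V^{s-1},\dots,V^{n-2}$ with a basis of the slope-zero part. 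The independence then becomes a triangularity statement coming from the normal form of Lemma \ref{lem:normal-form}, since no nonzero low representative lies in $DM$. This tangent computation is the main obstacle. It requires an explicit description of the connected--étale filtration of $M_K$ in the $V$-basis, using $u_s\neq0$, and a verification that the leading terms do not cancel.

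Finally, composing with \eqref{eq:DE-hom} gives $\cH_K\simeq\Hom_{\Zp}(T_pE,C)$. Over an algebraically closed extension, $T_pE\simeq\Zp^r$ and $C\simeq H_s$, since a one-dimensional height-$s$ formal group over an algebraically closed field is isomorphic to the Honda group. So $\cH_K\simeq C^r\simeq H_s^r$, which is $p$-divisible of dimension $r$ and height $sr$. Naturality holds because every step commutes with base change.
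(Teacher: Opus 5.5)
\textbf{There is a genuine gap.} It sits at the step carrying all the content: that the block extension over $\cH_K$ is induced from $\DE_K(E,C)$, and that the induced map is a tangent isomorphism.

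\emph{The construction does not parse as written.} An extension with quotient $G_K$ cannot be pulled back along $\pi_{\mathrm{et}}\colon G_K\to E$, and pushing out along $G_K\to E$ yields an extension with kernel $E$, not a self-extension of $G_K$. What you need is that the class lies in the image of $\gamma=i_*\pi_{\mathrm{et}}^*$ with a unique preimage; uniqueness requires rigidity of homomorphisms.

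\emph{The justification of the splittings is wrong.} $M$ is the Cartier module of the formal completion $\widehat G$, so $M_K$ is the Cartier module of the height-$s$ formal group $C$. It has no \'etale part and no connected--\'etale filtration. Even if it did, bijectivity of $V$ on an \'etale part would make $V^{s-1}$ surjective there, not zero. The pushout to $E$ is in fact split for every point of $\cQ_K$, because $\DE_K(G_K,E)$ is trivial ($\Lie E=0$ in \eqref{eq:DE-tangent}), so it says nothing about $\cH$. What actually singles out $\cH_K$ is the splitting of the pullback along $C\hookrightarrow G_K$, which you dispose of with ``dually''. That splitting is true and can be proved. For an Artinian local $K$-algebra $S$ with maximal ideal $I$, $u_s$ is a unit, so $-V^{s-1}[u_s]$ is the leading part of $D$, and a triangular recursion gives $V^{s-1}M_S(I)=DM_S(I)$.

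\emph{The tangent computation cannot work as planned.} The same computation shows this: over $K$ every $s_{\cH}(v)$ lies in $DM_S(I)$. So the claim that ``no nonzero low representative lies in $DM$'' is a $t$-adic statement over $B$ that fails over $K$. The nontriviality of these extensions of $p$-divisible groups is visible only through the integral structure over $B$. Also, $M/VM=\Lie\widehat G$ is one-dimensional, so $\omega_{E^\vee}$ cannot be read off from it.

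\textbf{How the paper handles this.} The paper supplies exactly this integral input and never analyzes splittings directly. First it shows that $\beta_K\colon\cQ_K\to\DE_K(G_K,G_K)$ is an isomorphism. The tangent map is defined over $B$, and at the Honda fiber $D$ linearizes to $-V^{n-1}$. Both tangent modules have rank $n-1$, the tangent map is an isomorphism modulo $t$, and so its determinant is a unit in $B$. Then it runs the comparison in the opposite direction, $j=\beta_K^{-1}\gamma\colon X=\DE_K(E,C)\to\cQ_K$, and shows that $j$ lands in $\cH_K$ by a $p$-divisibility argument. Namely, $[p]_X$ is faithfully flat, and $j\circ[p]_X=[p]_{\cQ}\circ j$ factors through $\cH_K$ because $[p]_{\cQ}$ does. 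Injectivity of $d\gamma$ and $\dim X=\dim\cH_K=r$ finish the proof.

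\textbf{How to repair your direction.} Add the Cartier computation above for the pullback splitting and import the tangent isomorphism of $\beta_K$. Then $\beta_K|_{\cH_K}=\gamma\circ\phi$ forces $d\phi$ to be injective, and your dimension count concludes. Your last paragraph, deducing the second isomorphism and the consequences, is fine.
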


\begin{proof}
We first identify $\cQ_K$ with the self-extension space of $G_K$.  Let $Z_b$ be the all-level block extension of Lemma \ref{lem:block-all-level}; in particular
\begin{equation}\label{eq:block-extension}
De_1=0,
\qquad
De_2=s_{\cQ}(b)e_1
\end{equation}
is an integral construction before any passage to the fraction field.  Changing the chosen lift of $e_2$ changes the source by an element of $DM$, whereas Baer sum adds source terms.  Functorial normal form therefore gives a homomorphism
\[
\beta_K:\cQ_K\longrightarrow\DE_K(G_K,G_K).
\]
The tangent map is already defined over $B$ and can be computed at the Honda fiber. Work over $k[\epsilon]/(\epsilon^2)$.  A first-order source term has the form
\[
 C=\sum_{j\ge0}V^j[\epsilon c_j]e_1.
\]
Frobenius kills square-zero coefficients and every nonlinear Witt carry vanishes.  A change of lift by
\[
 A=\sum_{j\ge0}V^j[\epsilon a_j]e_1
\]
changes the source by
\[
 DA=-V^{n-1}A.
\]
Hence the coordinates of $C$ in positions $j\ge n-1$ are gauge, while the first $n-1$ coordinates survive independently.  The tangent space of the Cartier quotient is therefore $k^{n-1}$.

On the extension side, the relative self-extension functor over $B$ has tangent module
\[
\Hom_B(\omega_{G^\vee},\Lie G),
\]
which is free of rank $n-1$ in this one-dimensional height-$n$ situation.  The block construction sends the surviving source coordinates to these first-order extension classes, so the tangent map modulo $t$ is an isomorphism.  Its determinant is therefore a unit in $B$.  Since both source and target are smooth, the formal inverse-function theorem gives
\begin{equation}\label{eq:Q-selfext}
\beta_K:\cQ_K\xrightarrow{\sim}\DE_K(G_K,G_K).
\end{equation}

Now use the connected--\'etale sequence \eqref{eq:ce}. Pullback along $\pi_{\mathrm{et}}$ and pushout along $i$ give
\[
\gamma=i_*\pi_{\mathrm{et}}^*:\DE_K(E,C)\longrightarrow\DE_K(G_K,G_K).
\]
The tangent map of $\gamma$ is injective of rank $r$: the map $\Lie C\to\Lie G_K$ is an isomorphism because the quotient is \'etale, while dualizing \eqref{eq:ce} yields a surjection $\omega_{G_K^\vee}\twoheadrightarrow\omega_{E^\vee}$.

Set $X=\DE_K(E,C)$ and $j=\beta_K^{-1}\gamma:X\to\cQ_K$. By \eqref{eq:DE-hom}, $X$ is $p$-divisible, so $[p]_X$ is finite faithfully flat. Since $j$ is a homomorphism and $[p]_{\cQ}$ factors through $\cH$,
\[
j\circ[p]_X=[p]_{\cQ}\circ j
\]
factors through $\cH_K$. If $I_{\cH}$ is the ideal defining $\cH_K$ in $\cQ_K$, then for $f\in I_{\cH}$ we have
\[
[p]_X^*j^*(f)=0.
\]
Faithful flatness makes $[p]_X^*$ injective, hence $j^*(f)=0$. Thus $j$ itself factors through $\cH_K$. The source and target are smooth of the same dimension $r$, and the tangent map is an isomorphism. Therefore $j:X\xrightarrow\sim\cH_K$, proving the first isomorphism in \eqref{eq:H-DE}. The second is Chai's canonical comparison \eqref{eq:DE-hom}.
\end{proof}

\begin{remark}
The $p$-divisibility assertion concerns the generic fiber $\cH_K$ and follows from its interpretation as the extension group \eqref{eq:H-DE}.
\end{remark}

\subsection{The full Honda obstruction map}

Let
\[
\cO_D=\End(H_n),\qquad D_n=\cO_D[1/p].
\]
The algebra $D_n$ is the central division algebra over $\Qp$ of invariant $1/n$, and $\cO_D$ is its maximal order. We use the full order, not merely its unramified Witt subring.  As an additive $\Zp$-module one may choose the standard decomposition
\begin{equation}\label{eq:OD-decomp}
\cO_D=\bigoplus_{b=0}^{n-1}\Pi^b W(\mathbf F_{p^n}),
\end{equation}
where $\Pi$ is a Honda Frobenius uniformizer; see, for example, \cite{ChaiLT}. We use this decomposition to choose additive integral lifts of special-fiber endomorphism curves.

\begin{definition}\label{def:obstruction}
For $\alpha\in\cO_D$, choose a typical Cartier curve $a_\alpha\in M$ whose reduction modulo $t$ is the special-fiber endomorphism curve $\alpha_*(e)$. Define
\begin{equation}\label{eq:obstruction}
c(\alpha)=\rho(Da_\alpha)\in\cH(tB).
\end{equation}
\end{definition}

\begin{lemma}\label{lem:obstruction}
The class $c(\alpha)$ is independent of the chosen lift. The lifts may be chosen additively and continuously in $\alpha$, and then
\begin{equation}\label{eq:c-add}
c(\alpha+\beta)=c(\alpha)+_{\cH}c(\beta),
\qquad
c(p\alpha)=[p]c(\alpha).
\end{equation}
Moreover
\begin{equation}\label{eq:c-growth}
v_t\bigl(c(p^\ell\alpha)\bigr)\geq q^\ell
\end{equation}
whenever the left side is nonzero. Finally, $c(\alpha)=0$ if and only if the special-fiber endomorphism $\alpha$ lifts to the current formal group.
\end{lemma}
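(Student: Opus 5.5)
The plan is to prove the four assertions in order, each from the structure equation \eqref{eq:D}, the normal form of Lemma \ref{lem:normal-form} and the growth estimate of Lemma \ref{lem:H-subgroup}.

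\textbf{Independence of the lift and membership in $\cH$.} Two lifts $a_\alpha,a'_\alpha$ of $\alpha_*(e)$ differ by an element $A\in M(tB)$, since their reductions modulo $t$ agree. The operator $D$ is additive on $M$, so $Da'_\alpha=Da_\alpha+DA$. Because $\ker\rho=DM(tB)$ and $\rho$ is additive, the two classes agree. To see that $c(\alpha)$ lies in $\cH$, use that $\alpha_*(e)$ satisfies the Honda structure equation $F-V^{n-1}$. Hence $Da_\alpha$ reduces to zero modulo $t$, so $Da_\alpha\in M(tB)$. Its $F$ and $V^{n-1}$ parts cancel, and what survives is
\[
-\sum_{i\ge s}V^{i-1}[u_i]\,a_\alpha
\]
plus Witt carries. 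All of these lie in $V^{s-1}M(tB)$, and by the proof of Lemma \ref{lem:H-subgroup} this set maps onto $\cH(tB)$.

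\textbf{Additivity and compatibility with $p$.} Using \eqref{eq:OD-decomp}, write $\alpha=\sum_b\Pi^b w_b$. Choose integral Cartier lifts $\ell_b$ of the curves $\Pi^b_*(e)$, and for $w\in W(\mathbf F_{p^n})$ set
\[
a_{\Pi^b w}=\ell_b\cdot w,
\]
using the $W(k)$-module structure of $M$. Then define $a_\alpha=\sum_b a_{\Pi^b w_b}$. This lift is $\Zp$-linear and continuous in $\alpha$, and additivity of $D$ and $\rho$ gives the first identity in \eqref{eq:c-add}. For the second, note that $p\,a_\alpha$ is a lift of $(p\alpha)_*(e)$ and that $D$ commutes with multiplication by $p=FV$. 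So
\[
c(p\alpha)=\rho(p\,Da_\alpha)=[p]_{\cQ}\rho(Da_\alpha),
\]
since $\rho$ is a homomorphism. This equals $[p]c(\alpha)$ in $\cH$.

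\textbf{The growth estimate.} By iteration, $c(p^\ell\alpha)=[p]^\ell c(\alpha)$. Since $c(\alpha)\in\cH(tB)$, we have $v_t(c(\alpha))\ge1$. Applying \eqref{eq:p-growth} $\ell$ times then gives the bound \eqref{eq:c-growth}.

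\textbf{Vanishing criterion (main obstacle).} This is the step requiring genuine input from Zink's equivalence. Suppose $c(\alpha)=0$. By uniqueness of normal form, $Da_\alpha=DA$ for some $A\in M(tB)$. Then $a'=a_\alpha-A$ still lifts $\alpha_*(e)$ and satisfies $Da'=0$. Sending $e\mapsto a'$ therefore defines a Cartier-module endomorphism of $M$, because the relation $De=0$ is respected. By the Cartier equivalence (Theorems 4.23 and 4.39 of \cite{Zink}) this is an endomorphism of $\widehat G$ reducing to $\alpha$. Conversely, if $\alpha$ lifts to an endomorphism $\tilde\alpha$, then $\tilde\alpha_*(e)$ is a lift of $\alpha_*(e)$ killed by $D$, so $c(\alpha)=0$ by independence of the lift.

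The delicate point is that an endomorphism of $M$ must commute with $V$, $F$ and the Teichm\"uller operators. One must check that the assignment $e\mapsto a'$ extends uniquely to such an endomorphism exactly when $Da'=0$. This is the universal property of the structure-equation presentation in Zink's Theorem 4.23, and I would invoke it directly. Since the lemma concerns the $p$-typical formal part, no separate argument for the full $p$-divisible group should be needed.
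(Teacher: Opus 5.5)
Your proposal is correct and follows essentially the paper's route: independence via $\ker\rho=DM(tB)$, an additive continuous lift built from \eqref{eq:OD-decomp}, $c(p\alpha)=[p]c(\alpha)$ from additivity of $\rho$, iteration of \eqref{eq:p-growth}, and the structure-equation/Cartier equivalence for the vanishing criterion; for the lift you use the $W(\mathbf F_{p^n})$-action on $n$ lifts $\ell_b$ where the paper uses a $\Zp$-basis of $\cO_D$, and in either case continuity comes from $p^NM\subset V^NM$. One small correction: $(F-V^{n-1})a_\alpha$ vanishes only modulo $t$, not over $B$, so deduce $c(\alpha)\in\cH(tB)$ as the paper does, from $DM\subset V^{s-1}M$; this holds because $Fe\in V^{s-1}M$ by \eqref{eq:D} and $pM\subset V^sM$ by \eqref{eq:pVm}.
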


\begin{proof}
The curve $Da_\alpha$ reduces to zero modulo $t$, hence lies in $M(tB)$. If $a_\alpha'$ is another lift, then $a_\alpha'-a_\alpha\in M(tB)$, whose $D$-image is killed by $\rho$. Thus \eqref{eq:obstruction} is well-defined. Since $DM\subset V^{s-1}M$, its normal form lies in $\cH$.

Choose a $\Zp$-basis of the finite free order $\cO_D$ compatible with \eqref{eq:OD-decomp}, choose integral Cartier curves lifting the basis endomorphisms, and extend by addition and the central $\Zp$-action.  This yields a continuous additive lift
\[
 \lambda:\cO_D\longrightarrow M,
 \qquad \lambda(\alpha)\bmod t=\alpha_*(e).
\]
Continuity follows from $pM\subset VM$: if $\alpha_j\to0$ $p$-adically, then the initial $V$-coordinates of $\lambda(\alpha_j)$ eventually vanish to arbitrarily high order. Taking $a_\alpha=\lambda(\alpha)$ makes the additivity immediate.  Additivity of $D$ and $\rho$ gives \eqref{eq:c-add}; the relation $c(p\alpha)=[p]c(\alpha)$ follows because multiplication by the central integer $p$ descends through the quotient.  Iterating \eqref{eq:p-growth} yields \eqref{eq:c-growth}.

If $c(\alpha)=0$, the normal-form decomposition lets us replace $a_\alpha$ by a curve $a'_\alpha$ differing from it by positive valuation and satisfying $Da'_\alpha=0$. Under the Cartier equivalence this is exactly a lift of the special-fiber endomorphism. The converse is immediate.
\end{proof}

\section{Evaluation maps and Fargues--Fontaine period detection}

Let
\[
\Omega=\widehat{\overline K},\qquad \cO=\cO_\Omega,\qquad \m=\m_\Omega.
\]
Choose an isomorphism of formal groups
\begin{equation}\label{eq:phi}
\phi:C_\Omega\xrightarrow{\sim}H_{s,\Omega}.
\end{equation}
This choice concerns only the connected target $C_\Omega$ and does not split \eqref{eq:ce}.

\subsection{Integrality of homomorphisms to a Honda target}

The following elementary integrality lemma will be used repeatedly.

\begin{lemma}[Honda-target integrality]\label{lem:honda-integral}
Let $Y/B$ be a finite-dimensional smooth formal group such that every coordinate of $[p]_Y$ has total degree at least $q=p^s$. Let $L/K$ be a valued extension. Every formal homomorphism
\[
f:Y_L\longrightarrow H_{s,L}
\]
has coefficients of nonnegative valuation. In particular it converges on every tuple of positive valuation.
\end{lemma}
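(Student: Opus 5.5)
The plan is to use the homomorphism identity $f\circ[p]_Y=[p]_{H_s}\circ f$, together with $[p]_{H_s}(X)=X^{q}$ in the Honda coordinate, and to argue degree by degree. Write $f=\sum_I f_I\,y^I$ with $f_I\in L$ and no constant term. Over $L$ of characteristic $p$ the relation reads
\[
 f([p]_Y(y))=f(y)^{q}=\sum_I f_I^{\,q}\,y^{qI}.
\]
Note that $[p]_Y$ has coefficients in $B$, hence of nonnegative valuation. Its coordinates have no terms of total degree below $q$. For $p$-divisible $Y$ its linear term vanishes, and the lowest-degree part is a nondegenerate $q$-power map, at least after a coordinate change over the residue field. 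The last point needs justification; see below.

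First I would handle the case where $Y$ is already a product of Honda groups, $[p]_Y(y)=(y_1^{q},\ldots)$. Then comparing coefficients of $y^{qI}$ gives $f_I^{q}=\sum_J f_J\cdot(\text{coefficient})$, where $J$ ranges over indices of total degree at most $|I|$. Comparing coefficients of monomials that are not $q$-th powers gives relations forcing most $f_J$ to vanish or to satisfy a recursive equation.

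For the general case I would argue by contradiction. Let $-\mu<0$ be the minimal valuation among the coefficients in the lowest total degree $N$ where a negative valuation occurs. If valuations are unbounded below, first truncate to degrees at most $N$. Then compare degree-$qN$ terms of the two sides. On the right, $f_I^{q}$ has valuation $-q\mu$. On the left, degree-$qN$ contributions come from $f_J$ with $|J|\le N$, multiplied by integral coefficients of $[p]_Y$. Since every term of $[p]_Y$ has degree at least $q$, a monomial of degree $|J|$ in $f$ contributes only in degree at least $q|J|$. So the relevant $f_J$ have $|J|\le N$ and valuation at least $-\mu$. This gives $-q\mu\ge-\mu$, which is impossible for $\mu>0$ and $q>1$. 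The case $q=1$ means $s=0$ and does not occur, since $s\ge1$.

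The main obstacle is making this comparison legitimate. One must check that the degree-$qN$ coefficient $f_I^{\,q}$ on the right is not cancelled, and is matched by a single identity in which the left side is a finite integral combination of lower coefficients $f_J$, $|J|\le N$. Terms from degrees $|J|<N$ are integral by minimality, and terms with $|J|=N$ have valuation at least $-\mu$. So the left side has valuation at least $-\mu$, while the right side has exact valuation $-q\mu$. This is a contradiction.

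I would do this cleanly by choosing $I$ attaining the minimum valuation among degree-$N$ coefficients. I would then read off the coefficient of $y^{qI}$ on both sides. The right side has exactly $f_I^{q}$ there, since other $f_{I'}^{q}$ give other monomials. Integral coefficients of $[p]_Y$ over $B\subset\cO_L$ bound the left side.

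Convergence on tuples of positive valuation then follows immediately from the integrality of the coefficients.
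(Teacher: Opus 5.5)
Your argument is correct and is essentially the paper's proof: you induct on the lowest total degree $N$ carrying a non-integral coefficient, then compare the coefficient of $y^{qI}$ in degree $qN$, where the right side is exactly $f_I^{q}$ (using characteristic $p$) and the left side is an integral combination of coefficients of degree at most $N$. The preliminary material (nondegeneracy of the lowest-degree part of $[p]_Y$, the product-of-Honda case, the truncation remark) is unnecessary and can be dropped, since only the degree bound on $[p]_Y$, the integrality of its coefficients, and the finiteness of each homogeneous piece of $f$ are used.
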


\begin{proof}
Write $f=\sum_{d\geq1}f_d$ by total degree. Since $[p]_{H_s}(X)=X^q$, the homomorphism identity is
\[
f([p]_Y(x))=f(x)^q.
\]
Induct on $d$. Assume that all coefficients of $f_j$ are integral for $j<d$, and let $a<0$ be the minimum valuation of the coefficients of $f_d$. In total degree $qd$, the right side has minimum coefficient valuation $qa<a$. On the left, terms involving $f_j$ for $j<d$ are integral by induction, while the contribution of $f_d$ is an integral linear combination of the coefficients of $f_d$, hence has valuation at least $a$. This is impossible. Thus $f_d$ is integral. Positive input valuation then forces the degree-$d$ terms to tend uniformly to zero.
\end{proof}

The lemma applies to $Y=\cH$ by Lemma \ref{lem:H-subgroup}, and also to the original one-dimensional formal group. In particular the coefficients of $\phi$ are integral.

\begin{lemma}[Integral Honda projection]\label{lem:integral-projection}
The isomorphism \eqref{eq:phi} induces compatible maps on finite $p$-power torsion and hence an integral morphism of $p$-divisible groups
\begin{equation}\label{eq:integral-projection}
G_{\cO}\longrightarrow H_{s,\cO}.
\end{equation}
On positive-valued analytic points the induced map
\begin{equation}\label{eq:analytic-surj}
\widehat G(\m)\longrightarrow H_s(\m)
\end{equation}
is surjective, and its kernel is exactly the $p$-power torsion subgroup.
\end{lemma}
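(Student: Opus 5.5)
The plan is to build the morphism in \eqref{eq:integral-projection} in two stages. First I will obtain a morphism of $p$-divisible groups over $\Omega$ from the connected--\'etale sequence, and then show that it extends over $\cO$ by integrality. Over $\Omega$ the sequence \eqref{eq:ce} need not split. However, $E_\Omega$ is \'etale over an algebraically closed complete field, so $E_\Omega\simeq(\Qp/\Zp)^r$. Extensions of a constant \'etale group by a connected $p$-divisible group over $\cO$ are classified by $\Hom_{\Zp}(T_pE,C(\m))$, and these are split after passing to the algebraically closed generic fiber only at finite levels. I therefore will not use a splitting. Instead I will work with the formal completion $\widehat G$, which is a one-dimensional formal group over $B$ whose $p$-series has first term $u_sX^{q}$, and view $C_\Omega$ as a formal group over $\Omega$. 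By Zink's theory, or directly by the classification of one-dimensional formal groups over an algebraically closed field of generic height $s$, $\widehat G_{\cO}$ is isogenous to $H_{s,\cO}$. The isomorphism $\phi$ should be read as a formal homomorphism $\widehat G_\Omega\to H_{s,\Omega}$ coming from the identification $C_\Omega=\widehat G_\Omega$ as formal groups over $\Omega$ modulo its \'etale torsion.

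Concretely, the candidate map is
\[
\psi=\lim_{m\to\infty}\ \text{(a normalized $[p^m]$-quotient)}:\widehat G_\Omega\to H_{s,\Omega},
\]
or more simply a formal series $\psi$ satisfying $\psi([p]_{\widehat G}(X))=\psi(X)^q$. I will construct $\psi$ by successive approximation in degree. This is the same recursion as in the proof of Lemma \ref{lem:honda-integral}, now used to produce a solution rather than to bound one. Once $\psi$ exists, Lemma \ref{lem:honda-integral} applied with $Y=\widehat G$ shows that $\psi$ has integral coefficients, so it is defined over $\cO$. Since $\psi$ is a homomorphism, it carries $\widehat G[p^m]$ into $H_s[p^m]$ compatibly in $m$. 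Combined with the fact that $G_{\cO}[p^m]$ is determined by its connected part and the $\cO$-points, this gives the morphism \eqref{eq:integral-projection}.

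For the analytic statements I will argue with Newton polygons of $\psi(X)-y$ for $y\in\m$. The linear term of $\psi$ will be shown to be a unit times $X^{p^{?}}$, with a leading coefficient of $u_s$-type valuation. The Weierstrass preparation theorem over $\cO$, with $\Omega$ algebraically closed, then produces a root in $\m$, which gives surjectivity of \eqref{eq:analytic-surj}. For the kernel, $\psi([p^m]x)=\psi(x)^{q^m}$, and a nonzero point of $H_s(\m)$ has $q^m$-power of valuation tending to $\infty$ without vanishing. Conversely, any $x$ with $\psi(x)=0$ has finitely many possibilities in each disk. I will show that the roots of $\psi$ are exactly the torsion points by comparing zero counts, namely $p^{nm}$ torsion points against $p^{sm}$ torsion in $H_s$.

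The main obstacle is the existence and the precise shape of $\psi$. The naive map $\phi$ lives on $C_\Omega$, and identifying $C_\Omega$ with a quotient of $\widehat G_\Omega$ by the \'etale part requires care. The kernel of $\psi$ must be the full $p$-power torsion, including the \'etale lifts of height $r$, and not merely the torsion of $C$. I would first try to define $\psi$ as $\phi$ composed with the quotient $\widehat G_\Omega\to\widehat G_\Omega/\text{(rank-$p^{rm}$ canonical subgroups)}$ and then pass to the limit, using Lemma \ref{lem:honda-integral} to control integrality uniformly in $m$.
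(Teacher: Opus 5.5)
Your proposal has a genuine gap in the analytic half: neither the surjectivity argument nor the kernel argument works as stated.

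\textbf{Surjectivity.} Weierstrass preparation for $\psi(X)-y$ over $\cO$ is not available. The reduction $\bar\phi$ of $\phi$ modulo $\m$ is a homomorphism $H_n\to H_s$ over the residue field. Since $s<n$, it vanishes: a nonzero homomorphism of one-dimensional formal groups over a field is an isogeny and preserves height. So every coefficient of $\phi$ lies in $\m$, and $\phi(X)-y$ is never primitive. Your description of the leading term is also off: the linear term is simply $\phi'(0)X$ with $0<v_t(\phi'(0))<\infty$. Passing to closed subdiscs would require showing that the coefficient valuations of $\phi$ have infimum $0$, which is essentially the surjectivity you want.

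\textbf{Kernel.} Counting zeros is not yet an argument, and the counts are wrong: $G_\Omega[p^m]$ has only $p^{rm}$ geometric points, and $H_s(\m)$ has no nonzero torsion.

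\textbf{The missing idea is local inversion.} Since $\phi$ is integral with $\phi'(0)\neq0$, it is an analytic bijection between small discs about $0$.
\begin{enumerate}
\item Given $y\in H_s(\m)$, the point $[p^m]y=y^{q^m}$ lies in the target disc for large $m$. Lift it to $x_0$ there.
\item Choose $x$ with $[p^m]x=x_0$. Weierstrass preparation does apply here, to $[p](X)-a$, because $[p]_{\widehat G}\equiv X^{p^n}\pmod{\m}$.
\item Then $\phi(x)^{q^m}=y^{q^m}$, and injectivity of Frobenius gives $\phi(x)=y$.
\item For the kernel: integrality of $[p]_{\widehat G}$ and its vanishing linear term give $[p]^m x\to0$ for every $x\in\widehat G(\m)$. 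If $\phi(x)=0$, then $[p]^m x$ is a zero of $\phi$ inside the injectivity disc for large $m$, hence $[p]^m x=0$.
\end{enumerate}

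\textbf{The first half and your stated obstacle.} These rest on a misreading. Since $G/B$ deforms the connected group $H_n$, it is formal. So $C_\Omega$ is the $p$-divisible group of the formal group $\widehat G_\Omega=\Spf\Omega[[X]]$ itself, not a quotient of it by \'etale torsion. The \'etale part is invisible in $\Omega[[X]]$ and appears only among the analytic points $\widehat G(\m)$. Thus $\phi\in\Omega[[X]]$ is already the required homomorphism, and your integrality step (Lemma \ref{lem:honda-integral} with $Y=\widehat G$) is exactly what is needed.

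No $\psi$ has to be built, and your degree-by-degree construction would not give one. The relation $\psi([p]X)=\psi(X)^q$ fixes each coefficient only up to an Artin--Schreier-type ambiguity, and its solutions need not be additive. Already when source and target both have $[p](X)=X^p$, the series $\psi(X)=X^2$ satisfies it. Also, $\widehat G_{\cO}$ is not isogenous to $H_{s,\cO}$, since its special fiber has height $n$.

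For the torsion maps it suffices that the coordinate algebra of $G[p^m]$ is finite free over $B$ with $X$ nilpotent modulo $t$. Then $\phi$ converges there, and $\phi([p^m]X)=\phi(X)^{q^m}$ defines $G[p^m]_{\cO}\to H_s[p^m]_{\cO}$ compatibly in $m$.
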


\begin{proof}
For each $m$, the coordinate algebra of $G[p^m]$ is finite free and its distinguished coordinate is nilpotent modulo $t$. The integral power series $\phi(X)$ therefore converges in the finite free base-changed algebra over $\cO$. The identity
\[
\phi([p^m]X)=\phi(X)^{q^m}
\]
gives a compatible morphism $G[p^m]_{\cO}\to H_s[p^m]_{\cO}$, hence \eqref{eq:integral-projection}.

Since $\phi'(0)\neq0$, the analytic map is invertible on a sufficiently small disc. Multiplication by $p$ is surjective on $\widehat G(\m)$: for $a\in\m$, Weierstrass preparation applied to $[p](X)-a$ yields a distinguished polynomial, and algebraic closedness supplies a root of positive valuation. Given a target point, multiply it by a sufficiently large power of $p$ so that it lies in the small invertibility disc, lift there, and divide repeatedly by $p$ in the source. This proves surjectivity.

If $\phi(x)=0$, then $[p]^m x\to0$. For large $m$, the point $[p]^m x$ lies in the disc on which $\phi$ is injective and must therefore vanish. Thus $x$ is $p$-power torsion. Conversely, the target Honda group has no nonzero field-valued $p$-power torsion because $[p](X)=X^q$.
\end{proof}

\subsection{Completed universal covers and evaluation}

For a $p$-divisible group write
\[
\widetilde G=\varprojlim_{[p]}G
\]
for the universal-cover sheaf. On positive-valued analytic points we use the completed cover
\begin{equation}\label{eq:completed-cover}
\widetilde G^{\mathrm{an}}(\cO)
:=\varprojlim_{a\geq1}\widetilde G(\cO/t^a)
\simeq\varprojlim_{[p]}\widehat G(\m).
\end{equation}
An element is a compatible sequence of positive-valued points admitting all successive $p$-power roots. This completion is essential: the discrete connected Honda torsion sheaf on the reduced ring $\cO$ may have no nonzero points, whereas \eqref{eq:completed-cover} is nontrivial.

\begin{lemma}[Completed-cover description]\label{lem:completed-cover}
There is a canonical identification
\[
 \varprojlim_{a}\widetilde G(\cO/t^a)
 \simeq
 \varprojlim_{[p]}\widehat G(\m).
\]
Every finite $p$-power torsion point of $G_\Omega$ extends integrally to $\cO$, and an integral Tate vector determines a point of the completed cover whose zeroth component is zero.
\end{lemma}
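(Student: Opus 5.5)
The plan is to prove the three assertions of Lemma \ref{lem:completed-cover} in order: the identification of the two inverse limits, integral extension of torsion points, and the construction from Tate vectors. Throughout, the only inputs are the formal group structure of $\widehat G$ over $\cO$ and the finite freeness of the $G[p^m]$ already used in Lemma \ref{lem:integral-projection}.

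\textbf{Identification of the limits.} For each $a$, write $G(\cO/t^a)=\varinjlim_m G[p^m](\cO/t^a)$. The key observation is that $G$ over the base $\cO/t^a$, on which $t$ is nilpotent, is connected modulo the \'etale part of the special fiber. I would first restrict to the connected part. On a nilpotent base, $\widehat G(\cO/t^a)$ consists of nilpotent-coordinate points, and $[p]$ has $p$-series $\equiv$ a power series in $X^{p}$ modulo $p=0$ with leading term $u_sX^{q}$ plus $X^{p^n}$. Every such point is therefore killed by a power of $[p]$, so it is torsion and lies in the $p$-divisible group.

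Hence $\widetilde G(\cO/t^a)=\varprojlim_{[p]}\widehat G(t\cO/t^a\cO)$. The étale quotient contributes nothing after passing to universal covers modulo the formal part: an \'etale universal cover over a henselian base is rigid, so it reduces to the special fiber. Here I would use Grothendieck--Messing/Drinfeld rigidity, namely that $\widetilde G(R)\to\widetilde G(R/I)$ is bijective for nilpotent $I$. This gives that the transition maps in $a$ are bijections on universal covers.

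Taking $\varprojlim_a$ then commutes with $\varprojlim_{[p]}$. Since $\m=\varprojlim_a (t\cO/t^a)$ topologically and $\widehat G(\m)=\varprojlim_a\widehat G(t\cO/t^a)$ by completeness, the identification follows.

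\textbf{Extension of torsion points.} A finite torsion point of $G_\Omega$ is an $\Omega$-point of the finite free scheme $G[p^m]_{\cO}$. The coordinate ring is finite over $\cO$, so the point's coordinates are integral over $\cO$. Because $\cO$ is integrally closed (a valuation ring), the point extends uniquely to an $\cO$-point. Equivalently, this is the valuative criterion for the finite morphism $G[p^m]\to\Spec\cO$.

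\textbf{Tate vectors.} An integral Tate vector $y=(y_m)_m$ consists of compatible points $y_m\in G[p^m](\cO)$ with $py_{m+1}=y_m$ and $y_0=0$. I would read $y$ as the sequence $(0,y_1,y_2,\ldots)$ under the $[p]$-transition maps; this is a point of $\varprojlim_{[p]}G(\cO/t^a)$ for every $a$, compatible in $a$. It thus defines a point of the completed cover with zeroth component zero. To land in $\widehat G(\m)$, one passes via the identification to the formal part: $y_m$ reduced to $\cO/t^a$ need not lie in $\widehat G$. So I would instead use rigidity to replace $y$ by $p^N$-twists, or project using the connected--\'etale sequence over $\cO/t^a$.

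\textbf{Main obstacle.} The main difficulty is the first step: showing that the \'etale part $E$ does not obstruct the identification, since $G[p^m](\cO/t^a)$ is not just formal points. I would handle this with rigidity of universal covers on nilpotent thickenings, reducing to $\cO/t=\bar k$. Over $\bar k$ the group $G$ is $H_n$, which is connected, so $\widetilde G(\cO/t^a)=\widetilde G(\bar k)=\widetilde{H_n}(\bar k)$ lies entirely in formal points.
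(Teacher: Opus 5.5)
Your skeleton agrees with the paper's proof: nilpotent rigidity of universal covers along $\cO/t^{a+1}\to\cO/t^a$, interchange of the two inverse limits, integral closedness of $\cO$ for torsion points, and reading a Tate vector as a $[p]$-compatible sequence with zeroth term $0$. The gap is in the step you call the main obstacle, which rests on a false reduction.

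The ring $\cO/t$ is not $\bar k$. Its nilradical $\m/t\cO$ is nil but not nilpotent, because the value group of $\Omega$ is divisible. So Scholze--Weinstein rigidity does not apply to $\cO/t\to k$, and in fact it fails there. Over the field $k$ there are no nonzero nilpotent coordinates, so $\widetilde{H_n}(k)=0$. On the other hand, a nonzero Tate vector $y$ reduces to a nonzero element of $\widetilde G(\cO/t^a)$ as soon as $a$ exceeds the valuation of the coordinate of $y_1$. Thus your equality $\widetilde G(\cO/t^a)=\widetilde{H_n}(\bar k)$ is false. If it held, it would make the left side of the lemma zero while the right side is not; this is exactly what the paper warns about right after \eqref{eq:completed-cover}.

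The fact you need has a simpler proof, and it shows there is no \'etale part to handle.
\begin{itemize}
\item The closed fiber of $G/B$ is the connected group $H_n$. So $G$ is the formal group $\widehat G$, and $G[p^m]$ is cut out by the Weierstrass polynomial of $[p^m](X)$, which is monic and congruent to $X^{p^{nm}}$ modulo $t$.
\item Hence over any $B$-algebra in which $t$ is nilpotent, every point of $G[p^m]$ has nilpotent coordinate.
\item Conversely, your own observation that $[p](X)$ is a series in $X^p$ shows every nilpotent-coordinate point is $p$-power torsion.
\end{itemize}
Therefore $G(\cO/t^a)=\widehat G(\m/t^a\cO)$. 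The \'etale quotient $E$ exists only over $K$, after inverting $t$.

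The same remark removes your worry in the Tate-vector step. The coordinate of $y_m$ is a root of a monic polynomial congruent to $X^{p^{nm}}$ modulo $\m$, hence lies in $\m$. So $y_m\in\widehat G(\m)$ already; this is the positivity statement in the paper's proof. Neither $p^N$-twists nor a connected--\'etale sequence over $\cO/t^a$ (which is trivial there) is needed.

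Finally, replace $\m=\varprojlim_a t\cO/t^a\cO$ by $\m=\varprojlim_a\m/t^a\cO$. The former limit is $t\cO$, and $\varprojlim_{[p]}\widehat G(t\cO)=0$. Indeed, a nonzero $[p]$-division sequence satisfies $v_t(x_{m+1})\le v_t(x_m)/p$, so its valuations tend to $0$.
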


\begin{proof}
For every $a$, $p$ is nilpotent on the characteristic-$p$ ring $\cO/t^a$ and the ideal generated by $t$ is nilpotent.  Scholze--Weinstein's crystalline invariance of universal covers \cite[Proposition 3.1.3]{ScholzeWeinstein} identifies the values on successive nilpotent thickenings.  Taking the inverse limit in $a$ reconstructs a compatible system of formal coordinates of positive valuation.  Commuting the inverse limit over $a$ with the inverse limit under multiplication by $p$ gives the displayed identification; both sides consist of compatible arrays $(x_m)$ with
\[
 [p]x_{m+1}=x_m
\]
whose coordinates are $t$-adically topologically nilpotent.

For integrality of torsion points, each finite group scheme $G[p^m]$ is finite over $B$.  After base change to the valuation ring $\cO$, the coordinates of a geometric torsion point are integral over $\cO$, hence lie in $\cO$ because $\cO$ is integrally closed; reduction of the distinguished coordinate at the special fiber is nilpotent, so in fact the coordinate has positive valuation.  A Tate vector is a compatible sequence of such torsion points.  With the usual indexing, its zeroth term is the identity, which gives the last assertion.
\end{proof}

Let
\[
T=T_p(G_\Omega)=T_p(E_\Omega)\simeq\Zp^r.
\]
The equality follows because the connected part has no nonzero field-valued torsion. We normalize an integral Tate vector so that its zeroth projection is zero. Universal covers are crystalline across nilpotent thickenings \cite[Proposition 3.1.3]{ScholzeWeinstein}; we apply this on the finite quotients and pass to compatible inverse limits.

\begin{proposition}[Integral evaluation maps]\label{prop:theta}
Every $y\in T$ defines a homomorphism of formal groups over $\cO$
\begin{equation}\label{eq:theta}
\Theta_y:\cQ_{\cO}\longrightarrow H_{s,\cO},
\end{equation}
additive in $y$. If $y_1,\ldots,y_r$ is a basis of $T$, then
\begin{equation}\label{eq:theta-iso}
\Theta=(\Theta_{y_1},\ldots,\Theta_{y_r}):
\cH_\Omega\xrightarrow{\sim}H_{s,\Omega}^r
\end{equation}
is the comparison of Proposition \ref{prop:Q-DE}, followed by $\phi$ on each factor.
\end{proposition}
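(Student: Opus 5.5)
We construct $\Theta_y$ directly from the block extension of Lemma \ref{lem:block-all-level}, and evaluate on completed universal covers. For a point $b\in\cQ(I)$ over a test pair $(R,I)$ over $\cO$, let $Z_b$ be the extension
\[
0\to G\to Z_b\to G\to 0 .
\]
Lift the Tate vector $y\in T$, viewed through Lemma \ref{lem:completed-cover} as a point $\tilde y$ of the completed cover of the quotient copy of $G$ with zeroth component $0$, to a point $\tilde z$ of the completed cover of $Z_b$. Universal covers are crystalline and exact on nilpotent thickenings \cite[Proposition 3.1.3]{ScholzeWeinstein}, and at $b=0$ the specified splitting gives a canonical lift. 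Hence $\tilde z$ is canonical modulo the kernel copy of $G$ once we fix that it reduces to the split lift. Take its zeroth component in $Z_b$, which maps to $0$ in the quotient and so lies in $\widehat G$. Then apply the integral Honda projection \eqref{eq:integral-projection} of Lemma \ref{lem:integral-projection}. Its kernel is exactly the $p$-power torsion, which absorbs the ambiguity of choosing a torsion-compatible lift. This gives $\Theta_y(b)\in H_s(I)$.

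Next we check the formal properties. A change of lift $e_2\mapsto e_2-Ae_1$ changes the source by $-DA$ (Lemma \ref{lem:block-all-level}) and gives an isomorphic extension, so the construction factors through $\cQ=M/DM$. Baer sum corresponds to addition in $\cQ$, and the lifts add, so $\Theta_y$ is a homomorphism. Additivity in $y$ holds because lifting to the cover is additive. Functoriality in $(R,I)$ and representability (Lemma \ref{lem:functorial-normal-form}) make $\Theta_y$ a morphism of formal groups over $\cO$. Its coefficients are integral by Lemma \ref{lem:honda-integral}, applied to $Y=\cQ$, whose $p$-series has degree at least $q$ by Lemma \ref{lem:H-subgroup}.

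For \eqref{eq:theta-iso}, restrict to $\cH_\Omega$ and compose with the isomorphism $\cH_K\simeq\DE_K(E,C)$ of Proposition \ref{prop:Q-DE}. An extension in the image of $\gamma=i_*\pi_{\mathrm{et}}^*$ is a pushout and pullback of some $0\to C\to Z'\to E\to 0$. The evaluation of $\tilde y$ then becomes the following: lift $\pi_{\mathrm{et}}(y)\in T_pE$ through $Z'$, take the zeroth component in $C$, and apply $\phi$. This is precisely Chai's connecting map $\DE(E,C)\to\Hom_{\Zp}(T_pE,C)$, evaluated at $y$ and followed by $\phi$. Evaluating on a basis $y_1,\dots,y_r$ identifies $\Theta$ with the composite isomorphism \eqref{eq:H-DE} followed by $\phi^r$. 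In particular $\Theta$ is an isomorphism.

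The main obstacle is making the universal-cover lift well defined over the reduced ring $\cO$. There the discrete connected torsion may vanish, so we must work modulo $t^a$ and pass to the limit as in \eqref{eq:completed-cover}. We must also check that the resulting zeroth component depends on $b$ through an honest power series and not merely pointwise. I would first do this over Artinian test algebras $\cO/t^a[[b]]/(b)^N$, using crystalline invariance to lift from the split fiber. I would then verify compatibility in $a$ and $N$, and only afterwards pass to $\Omega$.
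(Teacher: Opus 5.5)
Your proposal is correct and follows essentially the same route as the paper. Take the unique crystalline lift of the split identification $\widetilde G\oplus\widetilde G\simeq\widetilde Z_b$ at $b=0$ over nilpotent parameter thickenings and apply it to $(0,y)$. Take the zeroth component, which lies in the kernel copy of $G$, and apply the integral Honda projection to get a series in $(b)\cO[[b]]$. Baer sum then gives additivity, and on $\DE_K(E,C)$ Chai's pushout description identifies the construction with $\lambda\mapsto\phi(\lambda(y))$.

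One tidy-up: well-definedness comes entirely from the uniqueness of the crystalline lift, so your remark that the torsion kernel of the Honda projection ``absorbs the ambiguity'' is unnecessary. Without the normalization it would also not work, since two lifts of $\tilde y$ differ by an arbitrary point of the cover of the kernel copy, and its zeroth component need not be killed by the projection.
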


\begin{proof}
Let $Z_b$ be the block extension defined by \eqref{eq:block-extension}. For
\[
S_a=\cO[b_0,\ldots,b_{n-2}]/(b)^a,
\]
crystallinity of universal covers gives a unique lift of the split identification at $b=0$,
\[
\kappa_a:\widetilde G_{S_a}\oplus\widetilde G_{S_a}\xrightarrow{\sim}\widetilde Z_{b,S_a}.
\]
Apply $\kappa_a$ to $(0,y)$ and take the zeroth component. Its image in the quotient $G(S_a)$ is zero because the zeroth component of $y$ is zero. Hence the zeroth component lies in the kernel $G(S_a)$. It also reduces to zero at $b=0$. Applying the integral map \eqref{eq:integral-projection} produces an element of $(b)S_a$. Compatibility in $a$ gives a power series
\[
\Theta_y(b)\in(b)\cO[[b]].
\]
Baer sum gives additivity in the extension parameter and addition of Tate vectors gives additivity in $y$.

Now restrict to the image of $\DE_K(E,C)$. Under Chai's description of these extensions by pushout, an extension parameter is a homomorphism
\[
\lambda:T\longrightarrow C(I)
\]
on the nilpotent augmentation ideal. The compatible universal-cover point represented by $(0,p^{-m}y)$ has zeroth component in the kernel equal to $\lambda(y)$. Consequently the construction above is exactly evaluation
\[
\lambda\longmapsto\phi(\lambda(y)).
\]
Naturality under pullback and pushout, together with Proposition \ref{prop:Q-DE}, proves \eqref{eq:theta-iso}.
\end{proof}

\begin{remark}[Why the completed cover is used]\label{rem:completed-vs-discrete}
The evaluation construction is not obtained by evaluating a discrete connected torsion sheaf on the reduced valuation ring.  For a constant connected Honda group that discrete set is trivial.  The nonzero evaluation points arise only after first passing through the nilpotent quotients $\cO/t^a$ and then taking the compatible inverse limit.  This order of operations is exactly what makes a Tate vector of the \'etale quotient visible in the connected kernel in the connected direction.
\end{remark}

Define
\begin{equation}\label{eq:pi-an}
\pi_{\mathrm{an}}:\widetilde G^{\mathrm{an}}(\cO)\longrightarrow H_s(\m)
\end{equation}
by taking the zeroth component and applying $\phi$.

\begin{lemma}[Obstruction--evaluation identity]\label{lem:obstruction-evaluation}
For every $\alpha\in\cO_D$ and $y\in T$,
\begin{equation}\label{eq:obstruction-evaluation}
\Theta_y(c(\alpha))=\pi_{\mathrm{an}}(\alpha y).
\end{equation}
\end{lemma}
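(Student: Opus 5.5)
The plan is to compute both sides of \eqref{eq:obstruction-evaluation} through one and the same universal-cover construction, namely the one used to define $\Theta_y$ in Proposition \ref{prop:theta}. First I need to make sense of $\alpha y$. The special-fiber endomorphism $\alpha$ acts on the universal cover $\widetilde G$ over the reduction $\cO/t$, and $\widetilde G$ is crystalline across nilpotent thickenings \cite[Proposition 3.1.3]{ScholzeWeinstein}. So $\alpha$ extends uniquely to an endomorphism of $\widetilde G_{\cO/t^a}$ for every $a$. Passing to the limit and using Lemma \ref{lem:completed-cover}, it acts on $\widetilde G^{\mathrm{an}}(\cO)$. Thus $\alpha y$ is a point of the completed cover, generally not a Tate vector, and $\pi_{\mathrm{an}}(\alpha y)$ is defined by \eqref{eq:pi-an}. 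Both sides of \eqref{eq:obstruction-evaluation} are additive in $\alpha$: the left by \eqref{eq:c-add} and Proposition \ref{prop:theta}, the right by construction. Both are also continuous in $\alpha$. Hence it suffices to prove the identity for the basis elements coming from \eqref{eq:OD-decomp}, although the argument below does not actually need this reduction.

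The main step is to build the extension classified by $c(\alpha)$ from the graph of $\alpha$. Take the chosen integral lift $a_\alpha=\lambda(\alpha)\in M$ and consider the module with $V$-basis $e_1,e_2$ and the relations
\[
De_1=0,\qquad D(e_2-a_\alpha e_1)=-Da_\alpha e_1 .
\]
After rewriting, this is the block extension \eqref{eq:block-extension} with source term $Da_\alpha$; up to a sign convention, fix it so that the class is $\rho(Da_\alpha)=c(\alpha)$. By Lemma \ref{lem:block-all-level}, changing the lift by gauge $A$ does not change the extension, so this extension is $Z_{c(\alpha)}$ pulled back along the point $c(\alpha)\in\cH(tB)$. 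In addition, the generator $e_2-a_\alpha e_1$ gives a map of Cartier modules which, modulo $t$, is the graph $x\mapsto(\alpha x, x)$ of $\alpha$. So modulo $t$ the extension is split by the section $\sigma=(\alpha,\mathrm{id})$ rather than by the tautological section $(0,\mathrm{id})$.

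Now I evaluate. By the definition of $\Theta_y$, the value $\Theta_y(c(\alpha))$ is obtained by lifting $(0,y)$ via the crystalline identification $\kappa$, which is the unique lift of the tautological splitting at $b=0$, and then taking $\phi$ of the zeroth component. The point $(0,y)$ corresponds under $\kappa$ to the universal-cover point that $\widetilde\sigma$ sends to $(\alpha y,y)$. Here $\widetilde\sigma$ is the crystalline lift of the splitting given by the graph, and it exists because $\sigma$ is a splitting modulo $t$ and universal covers are insensitive to the nilpotent thickening $\cO/t^a$. The two splittings $\kappa$ and $\widetilde\sigma$ differ by the crystalline extension of $\alpha$ into the kernel, so the difference point is $(\alpha y,0)$. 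Its zeroth component lies in $G$, and applying $\phi$ gives $\pi_{\mathrm{an}}(\alpha y)$, because the zeroth component of $y$ itself vanishes. Compatibility as $a$ varies passes the identity to $\widetilde G^{\mathrm{an}}(\cO)$.

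I expect the main obstacle to be the comparison of the two crystalline lifts. One must check that specializing the universal block extension $Z_b$ at $b=c(\alpha)$ commutes with forming $\kappa_a$. Here $c(\alpha)$ has positive valuation, so it only defines a map $S_a\to\cO/t^{a'}$ for suitable $a'$. One must also check that the unique lift of the split identification is insensitive to replacing $\cO$-points by $\cO/t^{a}$-points. I would handle both by working modulo $(b)^a$ and $t^{a'}$ simultaneously and invoking uniqueness of crystalline lifts at each finite level. The sign convention of the source term (so that the class is $c(\alpha)$ rather than $-c(\alpha)$) should then be fixed to match Definition \ref{def:obstruction}.
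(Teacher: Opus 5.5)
Your proposal is correct and follows essentially the paper's proof. The paper starts from $Z_{c(\alpha)}$ and writes down the integral Cartier splitting $e_2\mapsto e_2-Le_1$, with $L=a_\alpha-B_\alpha$, whose reduction is the unipotent matrix with entry $-\alpha$; you instead start from the split module, twist the generator by $a_\alpha$, and gauge by $B_\alpha$, which is the same computation read in the opposite direction. The rest also matches the paper: comparing the crystalline lift of the tautological splitting with the lift of the graph splitting, and justifying the specialization by working modulo $t^{a}$ and $(b)^N$ simultaneously with uniqueness of crystalline lifts. One wording correction: your zeroth-component step uses that $\widetilde\sigma$ is induced by the honest splitting over $B$ (coming from $De_2=0$), not merely by a splitting modulo $t$, so you should phrase it that way.
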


\begin{proof}
Choose $a_\alpha$ as in Definition \ref{def:obstruction} and write its exact normal form
\[
Da_\alpha=DB_\alpha+s_{\cQ}(c(\alpha)).
\]
Put $L=a_\alpha-B_\alpha$. Then
\begin{equation}\label{eq:DL}
DL=s_{\cQ}(c(\alpha)).
\end{equation}
At the parameter $b=c(\alpha)$, the block extension \eqref{eq:block-extension} is therefore split by the Cartier map sending
\[
e_1\longmapsto e_1,
\qquad
e_2\longmapsto e_2-Le_1.
\]
Modulo $t$, the curve $L$ reduces to the special-fiber endomorphism $\alpha$, so the matrix of the reduced splitting is
\[
\begin{pmatrix}1&-\alpha\\0&1\end{pmatrix}.
\]
The crystalline splitting used in the definition of $\Theta_y$ is normalized so that its reduction to the special fiber is the identity. It is obtained by precomposing on the split source with
\[
\begin{pmatrix}1&\alpha\\0&1\end{pmatrix}.
\]
Applied to $(0,y)$, the resulting component in the kernel is $\alpha y$. Taking the zeroth component and applying $\phi$ gives \eqref{eq:obstruction-evaluation}.

To justify the specialization integrally, the coefficients of $\Theta_y$ are integral and $c(\alpha)$ has positive $t$-adic valuation.  Fix $a$.  Modulo $t^a$ the tuple $c(\alpha)$ is nilpotent, hence its value factors through a finite parameter quotient
\[
 \cO[b]/(b)^N
\]
for sufficiently large $N$.  On the double quotient by $(t^a,(b)^N)$, the parameter ideal, the $t$-ideal, and their sum are nilpotent.  We may therefore either first specialize the universal block extension at $b=c(\alpha)$ and then reduce modulo $t^a$, or first reduce the universal family and then specialize.  Crystalline lifting of the universal cover is unique along both nilpotent directions, so the resulting splittings agree.  The Cartier splitting above is defined integrally and is compatible with every $p^m$-torsion level.  Thus the matrix computation gives \eqref{eq:obstruction-evaluation} modulo $t^a$ for every $a$.  Completeness yields the exact identity.

The sign depends only on the marking convention: replacing the stabilizer action by the opposite convention changes both the off-diagonal splitting matrix and the action on $y$, leaving the detection statement unchanged.
\end{proof}

\subsection{The Fargues--Fontaine exact sequence}

Let $X$ be the Fargues--Fontaine curve associated with $\Omega$ and coefficient field $\Qp$. Write
\[
\cE_h=\cO_X(1/h)
\]
for the stable vector bundle of rank $h$ and degree $1$ attached to the height-$h$ Honda group. We use the Scholze--Weinstein fully faithful functor from $p$-divisible groups up to isogeny to vector bundles on $X$ \cite[Theorem 5.1.4(ii)]{ScholzeWeinstein}, together with the Fargues--Fontaine classification of vector bundles \cite{FarguesFontaine}. For a Honda group of height $h$, write
\[
D_h=\End(H_h)[1/p].
\]

\begin{lemma}[The induced bundle map is a surjection]\label{lem:FF-surj}
The integral map $G_{\cO}\to H_{s,\cO}$ induces a nonzero morphism
\[
 \pi_X:\cE_n\longrightarrow\cE_s
\]
on the Fargues--Fontaine curve, and this morphism is surjective.  Its kernel is a semistable bundle of slope zero and rank $r=n-s$.
\end{lemma}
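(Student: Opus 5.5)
We plan to obtain $\pi_X$ by functoriality and then pin down its kernel using slopes and the classification of bundles. First, Lemma \ref{lem:integral-projection} gives a morphism of $p$-divisible groups $G_{\cO}\to H_{s,\cO}$. Reduce it modulo $t$, or modulo any nilpotent ideal containing $p$. The Scholze--Weinstein functor \cite[Theorem 5.1.4(ii)]{ScholzeWeinstein} then attaches a morphism of vector bundles on $X$; this functor only depends on the reduction to $\cO/p$ up to isogeny, via crystallinity of universal covers. The source bundle is $\cE_n$, since the reduction of $G$ to the residue field is the Honda group $H_n$, which is isoclinic of slope $1/n$. The target bundle is $\cE_s$. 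Nonvanishing will follow from full faithfulness: the morphism of $p$-divisible groups is nonzero up to isogeny because its restriction to the connected part $C_\Omega$ is the isomorphism $\phi$. We can also see this on global sections. By \cite{ScholzeWeinstein}, $H^0(X,\cE(G))$ is identified with $\widetilde G^{\mathrm{an}}(\cO)[1/p]$, and on these sections $\pi_X$ is $\pi_{\mathrm{an}}$, which is surjective onto $H_s(\m)$ by Lemma \ref{lem:integral-projection}.

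For surjectivity, we use the stability of $\cE_s$. The image $\mathcal I=\operatorname{im}\pi_X$ is a nonzero subsheaf of $\cE_s$. It is the quotient of $\cE_n$, which is semistable of slope $1/n$, so every quotient of $\mathcal I$ has slope at least $1/n$. Its saturation $\mathcal I^{\mathrm{sat}}\subset\cE_s$ has slope at most $1/s$, with equality only if $\mathcal I^{\mathrm{sat}}=\cE_s$. The aim is to rule out a proper saturation, and we would argue by cases on the Harder--Narasimhan type.

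\begin{itemize}
\item \emph{Proper saturation.} Suppose $\operatorname{rk}\mathcal I<s$. Then $\mathcal I^{\mathrm{sat}}$ has slope strictly less than $1/s$, while $\mathcal I$ has slope at least $1/n>0$. Since $\cE_s$ is stable of degree $1$, every proper subbundle has degree at most $0$. So $\deg\mathcal I^{\mathrm{sat}}\le0$, hence $\deg\mathcal I\le0$, contradicting slope $\ge1/n>0$.
\item \emph{Full rank but not surjective.} Then $\mathcal I\subsetneq\cE_s$ with torsion cokernel, so $\deg\mathcal I\le 0$, the same contradiction. (Here we use that the degree drops by the length of the torsion cokernel, and that $\deg\cE_s=1$.)
\end{itemize}

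This step is routine once the degree bookkeeping is right, so we expect no difficulty here.

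For the kernel $\mathcal K=\ker\pi_X$: it is a subbundle, being the kernel of a surjection of vector bundles. It has rank $n-s=r$ and degree $\deg\cE_n-\deg\cE_s=1-1=0$. To show it is semistable of slope $0$, let $\mathcal F\subset\mathcal K$ be its maximal destabilizing subbundle. Then $\mathcal F\subset\cE_n$, so by semistability of $\cE_n$ its slope is at most $1/n<1/r'$ for any rank $r'\le r<n$. Since the degree is an integer, $\deg\mathcal F\le0$, so $\mu_{\max}(\mathcal K)\le0=\mu(\mathcal K)$, which forces semistability. By the Fargues--Fontaine classification, $\mathcal K\simeq\cO_X^r$.

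As a consistency check, we identify $\mathcal K$ with $T_p(E_\Omega)[1/p]\otimes\cO_X$. The étale quotient $E$ contributes the slope-zero part through the Tate vectors, and $H^0(\mathcal K)=\ker\pi_{\mathrm{an}}\otimes\Qp=T[1/p]$ by Lemma \ref{lem:integral-projection}. This identification is needed later, for the exact sequence, but not for this lemma.

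The main obstacle is the first step: making sure the functor applies to the integral map over $\cO$ and that the source really is $\cE_n$ rather than some other bundle of rank $n$ and degree $1$. We handle this by reducing modulo $t$. There $G$ becomes $H_n$, and universal covers, hence the associated bundles, are unchanged under nilpotent thickenings by \cite[Proposition 3.1.3]{ScholzeWeinstein}.
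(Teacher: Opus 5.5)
Your proposal is correct and follows the paper's proof: you apply the Scholze--Weinstein functor to the reduction modulo $t$, where $G$ becomes the constant group $H_n$, so the source is $\cE_n$; surjectivity comes from positivity and integrality of $\deg\operatorname{im}\pi_X$ against stability of $\cE_s$, and semistability of the kernel from $\deg\mathcal F\le r'/n<1$ for subbundles of $\cE_n$. The one real difference is nonvanishing, which you get from the morphism restricting to the isomorphism $\phi$ on $C_\Omega$ instead of the paper's coefficient-valuation argument. To carry that to $\cO/t$ you should cite injectivity of $\Hom_{\cO}(G,H_s)\to\Hom_{\cO/t}(G,H_s)$ (Drinfeld rigidity plus $t$-adic completeness), and your ``$\cO/p$'' should read $\cO/t$, since $\cO$ has characteristic $p$.
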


\begin{proof}
Choose a characteristic-zero untilt $\Omega^\sharp$ with
\[
 \cO_{\Omega^\sharp}/p\simeq\cO/(t).
\]
Reducing the integral BT morphism modulo $t$ and applying the fully faithful Scholze--Weinstein functor gives $\pi_X$.  It is nonzero: if every coefficient of the integral power series $\phi$ had strictly positive valuation, the analytic image of $\widehat G(\m)$ would have a uniform positive lower bound on valuation, contradicting the surjectivity of Lemma \ref{lem:integral-projection}.  Thus, after a suitable $p$-power rescaling in the isogeny category if necessary, a coefficient survives reduction, and the induced morphism is nonzero.

Let $I$ be the torsion-free image of $\pi_X$ and put $a=\operatorname{rk}I$.  Since $I$ is a quotient of the positive-slope semistable bundle $\cE_n$, its degree is a positive integer.  If $a<s$, the saturation $I^{\mathrm{sat}}\subset\cE_s$ is a proper subbundle of the stable bundle of slope $1/s$, and hence
\[
 \deg I^{\mathrm{sat}}<\frac{a}{s}<1,
\]
contradicting positivity and integrality of the degree.  Thus $a=s$.  The source and target both have degree one, so the torsion cokernel has degree zero and is zero.  Hence $\pi_X$ is surjective.

Its kernel $N$ has rank $r$ and degree zero.  If the first Harder--Narasimhan piece of $N$ had positive slope, its degree would be at least one and its rank at most $r$, so its slope would be at least $1/r>1/n$.  This is impossible for a subbundle of the semistable bundle $\cE_n$ of slope $1/n$.  Thus every HN slope of $N$ is zero and $N$ is semistable of slope zero.
\end{proof}

\begin{proposition}[Height-degeneration sequence]\label{prop:FF-sequence}
There is an exact sequence of vector bundles
\begin{equation}\label{eq:FF-sequence}
0\longrightarrow T[1/p]\otimes_{\Qp}\cO_X
\xrightarrow{\iota}
\cE_n
\xrightarrow{\pi_X}
\cE_s
\longrightarrow0,
\end{equation}
compatible with the completed covers, the map $\phi$, and the integral Tate vectors. The algebra $D_n$ acts on $\cE_n$, while $D_s$ acts on $\cE_s$. On global sections, $\pi_X$ agrees with $\pi_{\mathrm{an}}$.
\end{proposition}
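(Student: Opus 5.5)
Starting from Lemma \ref{lem:FF-surj}, which already gives a surjection $\pi_X:\cE_n\to\cE_s$ with kernel $N$ semistable of slope zero and rank $r$, the plan is to identify $N$ with $T[1/p]\otimes_{\Qp}\cO_X$ and then check the compatibilities. By the Fargues--Fontaine classification, a semistable slope-zero bundle is trivial, $N\simeq H^0(X,N)\otimes_{\Qp}\cO_X$, with $H^0(X,N)$ a $\Qp$-vector space of dimension $r$. So the exact sequence reduces to producing a natural isomorphism $T[1/p]\simeq H^0(X,N)$. We will construct this inside global sections of $\cE_n$.

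\textbf{Step 1: global sections.} Recall that for a $p$-divisible group $\mathcal G$ over $\cO$, the Scholze--Weinstein functor gives $H^0(X,\mathcal E(\mathcal G))=\widetilde{\mathcal G}(\cO/t)[1/p]$, via the universal cover. For $H_n$ and $H_s$ this is the Banach--Colmez space $B^{\varphi^n=p}$, respectively $B^{\varphi^s=p}$. By crystalline invariance of universal covers \cite[Proposition 3.1.3]{ScholzeWeinstein} and Lemma \ref{lem:completed-cover}, $\widetilde G(\cO/t)$ agrees with the completed cover $\widetilde G^{\mathrm{an}}(\cO)$. Here the reduction $G_{\cO/t}$ is identified with $H_{n,\cO/t}$ by crystallinity, since $G$ and $H_n$ agree modulo $t$ up to the nilpotent thickening. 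Under these identifications, $H^0(\pi_X)$ is the map ``zeroth component followed by $\phi$'', which is $\pi_{\mathrm{an}}$. This gives the final assertion of the proposition, and it is essentially a matter of unwinding definitions and functoriality of the Scholze--Weinstein functor applied to \eqref{eq:integral-projection}.

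\textbf{Step 2: the kernel on global sections.} An element of $\widetilde G^{\mathrm{an}}(\cO)$ is killed by $\pi_{\mathrm{an}}$ exactly when its zeroth component, and hence every component, lies in $\ker\phi$. By Lemma \ref{lem:integral-projection}, this means every component is $p$-power torsion, so the element is a compatible system of torsion points. Such systems, after inverting $p$, form $V_p(G_\Omega)=T[1/p]$, using Lemma \ref{lem:completed-cover} for integrality. This yields an injective $\Qp$-linear map $T[1/p]\hookrightarrow H^0(X,\cE_n)$ landing in $H^0(N)$ with image exactly $\ker H^0(\pi_X)=H^0(N)$. It therefore gives $\iota$ on global sections, and extending $\cO_X$-linearly gives $\iota:T[1/p]\otimes\cO_X\to N\subset\cE_n$.

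\textbf{Step 3: $\iota$ is an isomorphism onto $N$.} Since $N$ is trivial, the evaluation map $H^0(N)\otimes\cO_X\to N$ is an isomorphism, so $\iota$ is an isomorphism onto $N$. Exactness of \eqref{eq:FF-sequence} then follows from Lemma \ref{lem:FF-surj}. The $D_n$- and $D_s$-actions come from functoriality of the Scholze--Weinstein functor applied to the special-fiber isogeny classes. Note that $D_n$ acts on $\cE_n$ via the identification of $G_{\cO/t}$ with the Honda group up to isogeny, and it is not required to preserve $\iota$.

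\textbf{Main obstacle.} I expect the hardest part to be Step 1, namely the precise matching of $H^0(X,\cE_n)$ with the completed cover of $G$ rather than of $H_n$. This requires checking that the crystalline lift of the identification modulo $t$ is compatible with the inverse limit over $\cO/t^a$ and with Tate vectors normalized to have zeroth component $0$. I would handle it by working at each finite level $\cO/t^a$ and invoking uniqueness of crystalline lifts, as in the proof of Lemma \ref{lem:obstruction-evaluation}.
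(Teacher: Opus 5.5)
Your proposal is correct and follows the paper's proof essentially step for step. Both arguments use the triviality of the slope-zero kernel $N$ from Lemma~\ref{lem:FF-surj} and the Fargues--Fontaine classification, then identify global sections with the completed cover via crystallinity on the $\cO/t^a$, then show that $\ker\pi_{\mathrm{an}}$ consists of division sequences of $p$-power torsion points (Lemma~\ref{lem:integral-projection}) and so equals $T[1/p]$, and finally conclude with the evaluation isomorphism $H^0(X,N)\otimes_{\Qp}\cO_X\simeq N$.

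The step you single out as the main obstacle is lighter than you expect. Since $B\to\cO\to\cO/t$ kills $t$, it factors through $B/t=k$, so $G_{\cO/t}=H_{n,k}\otimes_k\cO/t$ on the nose, with no crystallinity needed for that identification. What remains is the nilpotent crystallinity $\widetilde G(\cO/t^a)\simeq\widetilde G(\cO/t)$ together with $t$-adic completeness, which is exactly Lemma~\ref{lem:completed-cover}.
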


\begin{proof}
By Lemma \ref{lem:FF-surj}, the map induced from the integral Honda projection is a surjection
\[
 \pi_X:\cE_n\twoheadrightarrow\cE_s
\]
with kernel $N$ semistable of slope zero and rank $r$.  The Fargues--Fontaine classification therefore gives
\[
 N\simeq\cO_X^r.
\]

It remains to identify this abstract trivial bundle with the specific Tate lattice coming from the connected--\'etale sequence.  This identification is part of the statement and is needed later in the matrix-spanning argument.

Full faithfulness with source $\Qp/\Zp$ identifies global sections of a vector bundle with the corresponding rational universal-cover sections on the semiperfect reduction.  By nilpotent crystallinity on $\cO/t^a$ and Lemma \ref{lem:completed-cover}, these sections identify with the completed analytic cover used in the definition of $\pi_{\mathrm{an}}$.

Let $(x_m)_{m\in\mathbf Z}$ be a compatible division sequence in the kernel.  Its image under the Honda projection is zero at every level.  By Lemma \ref{lem:integral-projection}, each zeroth component $x_m$ is $p$-power torsion.  Multiplying the whole compatible sequence by a sufficiently large power of $p$ makes the zeroth component equal to zero; the resulting sequence is precisely an integral Tate vector in $T$.  Conversely every integral Tate vector lies in the kernel because the connected Honda target has no nonzero field-valued torsion.  Hence
\[
 H^0(X,N)=T[1/p].
\]
Since $N$ is a trivial rank-$r$ bundle, these $r$ independent Tate sections canonically identify
\[
 N=T[1/p]\otimes_{\Qp}\cO_X.
\]
This proves exactness of \eqref{eq:FF-sequence} with the specified first term.  The construction is functorial for the Honda endomorphism actions, so $D_n$ acts on $\cE_n$, $D_s$ acts on $\cE_s$, and the induced map on completed-cover sections is exactly $\pi_{\mathrm{an}}$.
\end{proof}

\begin{proposition}[Every nonzero homomorphism detects a period]\label{prop:period-detect}
Let
\[
0\neq h:\cH_\Omega\longrightarrow H_{s,\Omega}
\]
be a formal-group homomorphism. Then there exists $\alpha\in\cO_D$ such that
\[
h(c(\alpha))\neq0.
\]
\end{proposition}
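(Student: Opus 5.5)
The plan is to reduce $h$ to evaluation maps and then apply \eqref{eq:obstruction-evaluation}. By Proposition \ref{prop:theta}, $\Theta:\cH_\Omega\to H_{s,\Omega}^r$ is an isomorphism. Hence $h=\psi\circ\Theta$ for a nonzero homomorphism $\psi:H_{s,\Omega}^r\to H_{s,\Omega}$. Since the Honda group has $\End(H_{s,\Omega})=\cO_{D_s}$ (using that $\Omega$ is algebraically closed, so geometric endomorphisms are those of $H_s$), we can write $\psi(x_1,\ldots,x_r)=\sum_i\beta_i x_i$ with $\beta_i\in\cO_{D_s}$, not all zero. Using additivity of $\Theta_y$ in $y$ and Lemma \ref{lem:obstruction-evaluation}, we get
\[
h(c(\alpha))=\sum_i\beta_i\,\pi_{\mathrm{an}}(\alpha y_i).
\]
So the claim becomes: the $\cO_D$-span of the Tate vectors, pushed through $\pi_{\mathrm{an}}$ and then through the $\cO_{D_s}$-linear functional $\beta=(\beta_i)$, is not identically zero.

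Next I pass to the Fargues--Fontaine curve via Proposition \ref{prop:FF-sequence}. The Tate vectors $y_i$ are global sections of $\cE_n$ lying in $\iota(T[1/p]\otimes\cO_X)$, and $\pi_X$ agrees with $\pi_{\mathrm{an}}$ on global sections. Suppose, for contradiction, that $h(c(\alpha))=0$ for all $\alpha$. Rationalizing, the map
\[
\Phi:D_n\otimes_{\Qp}T[1/p]\to H^0(\cE_s),\qquad \alpha\otimes y\mapsto\psi(\pi_X(\alpha y_1),\ldots)
\]
would vanish. To organize this, I interpret $\psi\circ(\pi_X(\alpha\,\cdot\,))$ as a bundle map $T[1/p]\otimes\cO_X\to\cE_s$ obtained by composing $\iota$, the action of $\alpha$ on $\cE_n$, and $\pi_X$. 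A map $\cO_X\to\cE_s$ is determined by its global section. Therefore vanishing on all $\alpha$ and $y$ says that $\pi_X\circ\alpha\circ\iota$, projected along $\beta$, is zero for all $\alpha\in D_n$.

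The core step is a matrix-spanning argument. After a finite extension splitting $D_n$, with $\breve{\mathbf Q}_{p^n}$-coefficients, $\cE_n$ decomposes compatibly with the action, and $D_n\otimes L\simeq M_n(L)$ acts through all of $\End(\cE_n)$ over the appropriate coefficient extension. This is the isoclinic statement $\End(\cE_n)=D_n$, which after extending scalars becomes a full matrix algebra on the rank-$n$ structure. The sum over $\alpha$ of the images $\alpha\iota(N)$, where $N$ is the trivial rank-$r$ subbundle, is a $D_n$-stable subsheaf of $\cE_n$ that contains $N\neq0$. Since $\cE_n$ is stable and simple as a $D_n$-module object, this sum is all of $\cE_n$. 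Composing with the surjection $\pi_X$ of Lemma \ref{lem:FF-surj} shows that the images $\pi_X(\alpha\iota(N))$ generate $\cE_s$. Finally, a nonzero $\cO_{D_s}$-functional $\beta$ cannot kill a generating family. To see this, I read $\psi$ as a nonzero map $\cE_s^r\to\cE_s$ and check that the family $(\pi_X\alpha y_i)_i$, as $\alpha$ varies, spans $H^0(\cE_s)^r$ modulo $\ker\psi$. I would formalize this by letting $\alpha$ range over $D_n$ and using that $D_n\to\Hom(N,\cE_s)$ is surjective onto the $D_s$-span of compositions.

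The main obstacle is this last surjectivity: controlling the $r$ components simultaneously rather than only their total image. My first attempt would use that $\Hom(\cO_X^r,\cE_s)\simeq H^0(\cE_s)^r$ is an irreducible module over $D_s^{\mathrm{op}}\otimes\End(\cO_X^r)$. Then I would show that $\{\pi_X\circ\alpha\circ\iota\}$ is a nonzero subspace stable under both actions: $\mathrm{GL}(T[1/p])$ acts via $D_n$ after splitting, and $D_s$ acts via the functoriality of $\phi$. Irreducibility would then make this subspace everything, so $\psi$ cannot vanish on it. Integrality is handled last: if some $\alpha\in D_n$ works, then $p^N\alpha\in\cO_D$ also works, because $h$ is additive, $c(p\alpha)=[p]c(\alpha)$, and $H_s(\m)$ is torsion-free.
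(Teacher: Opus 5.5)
Your reduction agrees with the paper's. You write $h=\sum_j\beta_j\Theta_{y_j}$, apply Lemma~\ref{lem:obstruction-evaluation} and Proposition~\ref{prop:FF-sequence}, and turn the contrary assumption into $\sum_j\beta_j\pi_X\alpha\iota_j=0$ for all $\alpha\in D_n$.

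The gap is in the step that should force every $\beta_j$ to vanish. As you recognize, the stability argument only shows that the images $\pi_X\alpha\iota(N)$ jointly generate $\cE_s$. That says nothing about a functional that mixes the $r$ columns: a family of the shape $(x,-x)$ can generate and still be killed by $\beta=(1,1)$.

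The irreducibility repair you propose cannot work. The $\Qp$-span of the maps $\pi_X\circ\alpha\circ\iota$ is the image of the $n^2$-dimensional space $D_n$. By contrast, $\Hom(\cO_X^r,\cE_s)\simeq H^0(\cE_s)^r$ is infinite-dimensional over $\Qp$, so no irreducibility-plus-stability argument can make this span everything. Indeed $H^0(\cE_s)$ is an infinite-dimensional vector space over the division algebra $\End(\cE_s)$, so $H^0(\cE_s)^r$ is not irreducible over $\End(\cE_s)\otimes M_r(\Qp)$. The claimed stability of the span under $\mathrm{GL}(T[1/p])$ is also not justified over $\Qp$. Precomposition by an arbitrary $g\in\mathrm{GL}(T[1/p])$ becomes realizable through $D_n$ only after extending scalars. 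At that point you are working with linear maps between generic fibers, not with $H^0(\cE_s)^r$.

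The missing idea is to decouple the $r$ columns on generic fibers after splitting; this is exactly the paper's argument. Take a field $L$ containing the function field of $X$ and splitting $D_n$. Then $D_n\otimes_{\Qp}L$ becomes $M_n(L)$ acting on the $n$-dimensional generic fiber of $\cE_n$. The vanishing identity extends $L$-linearly to $\sum_jA_jMv_j=0$ for all $M\in M_n(L)$, where $A_j=\beta_j\pi_X$ and $v_j=\iota_j(1)$.

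The $v_j$ are $L$-linearly independent because $\iota$ is a subbundle injection. So for fixed $j$ and arbitrary $w\in L^n$ you may choose $M$ with $Mv_j=w$ and $Mv_i=0$ for $i\neq j$. This gives $A_j=0$, and surjectivity of $\pi_X$ (Lemma~\ref{lem:FF-surj}) then gives $\beta_j=0$ for every $j$, a contradiction.

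This finite-dimensional linear algebra replaces your irreducibility claim. It uses neither stability of $\cE_n$ nor any structure of $H^0(\cE_s)$. Your closing remark on passing from $D_n$ to $\cO_D$ by multiplying by $p^N$ is fine and is equivalent to the paper's $\Qp$-linearity step.
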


\begin{proof}
Choose a basis $y_1,\ldots,y_r$ of $T$. By Proposition \ref{prop:theta}, every $h$ has a unique expression
\[
h=\sum_{j=1}^r d_j\Theta_{y_j},
\qquad d_j\in\End(H_s),
\]
with not all $d_j$ zero. Lemma \ref{lem:honda-integral} guarantees that the resulting formal maps may be evaluated on the positive-valued points under consideration.

Assume for contradiction that $h(c(\alpha))=0$ for every $\alpha\in\cO_D$. By Lemma \ref{lem:obstruction-evaluation} and Proposition \ref{prop:FF-sequence},
\begin{equation}\label{eq:matrix-detection}
\sum_{j=1}^r d_j\pi_X\alpha\iota_j=0
\qquad(\alpha\in\cO_D),
\end{equation}
where $\iota_j$ is the $j$th Tate column. By $\Qp$-linearity the same identity holds for every $\alpha\in D_n$.

Pass to a splitting field $L$ of $D_n$ over the function field of $X$. Then
\[
D_n\otimes_{\Qp}L\simeq M_n(L).
\]
The vectors $v_j=\iota_j(1)$ are linearly independent because $\iota$ is a rank-$r$ subbundle injection. Put $A_j=d_j\pi_X$. Equation \eqref{eq:matrix-detection} becomes
\[
\sum_j A_jMv_j=0
\qquad(M\in M_n(L)).
\]
Fix $j$ and an arbitrary vector $w\in L^n$. Since the $v_i$ are independent, choose $M$ with $Mv_j=w$ and $Mv_i=0$ for $i\neq j$. Then $A_jw=0$ for every $w$, so $A_j=0$. Since $\pi_X$ is surjective, $d_j=0$ for every $j$, contradicting $h\neq0$.
\end{proof}

\section{The asymptotic field and formal Zariski density}

The valuation growth \eqref{eq:c-growth} is too fast to retain information in the original $t$-adic field. We therefore renormalize by the natural scale $q^\ell$.

\subsection{The renormalized valued field}

Fix a nonprincipal ultrafilter $\mathcal U$. For a sequence $x=(x_\ell)$ in $\Omega$, put
\begin{equation}\label{eq:asymp-val}
w(x)=\lim_{\mathcal U}\frac{v_t(x_\ell)}{q^\ell}\in[-\infty,+\infty].
\end{equation}
Inside the ultraproduct field $\Omega^{\mathbf N}/\mathcal U$, set
\[
\mathcal V=\{x:w(x)>-\infty\},
\qquad
\mathcal N=\{x:w(x)=+\infty\}.
\]
Then $\mathcal V$ is a valuation ring, $\mathcal N$ its maximal ideal, and $\mathcal V/\mathcal N$ is a real-valued field. Let $\mathcal R$ be its metric completion, let $\nu$ denote the induced valuation, and put
\[
\mathfrak m_{\mathcal R}=\{x\in\mathcal R:\nu(x)>0\}.
\]
We write
\[
\langle x_\ell\rangle\in\mathcal R
\]
for the class of a sequence.

Every nonzero constant sequence has normalized valuation zero, so $\Omega$ embeds in $\mathcal R$ as a trivially valued subfield. Hence every formal series in $\Omega[[T_1,\ldots,T_r]]$ may be evaluated at a tuple of strictly positive $\nu$-valuation: a degree-$d$ monomial then has valuation at least $d$ times the minimum coordinate valuation.

We also need a transfer statement. If $H$ has coefficients integral for the original $t$-adic valuation and
\[
v_t(z_\ell)\geq aq^\ell-C,
\qquad a>0,
\]
then
\begin{equation}\label{eq:transfer}
\langle H(z_\ell)\rangle=H(\langle z_\ell\rangle).
\end{equation}
Indeed, the tail of total degree at least $J$ has original valuation at least $J(aq^\ell-C)$, so after normalization its contribution tends to valuation at least $Ja$. Polynomial truncations commute with the quotient, and $J\to\infty$ proves \eqref{eq:transfer}.

\begin{lemma}[Evaluation in the asymptotic field]\label{lem:asymptotic-evaluation}
The construction above has the following properties.
\begin{enumerate}[label=(\roman*)]
\item $\mathcal V$ is a valuation ring in the ultraproduct field and $\mathcal N$ is its maximal ideal; hence $\mathcal V/\mathcal N$ is a field and $\mathcal R$ is a complete valued field.
\item The constant embedding $\Omega\hookrightarrow\mathcal R$ is trivially valued.
\item Every series $F\in\Omega[[T_1,\ldots,T_r]]$ can be evaluated at every tuple $x\in\mathcal R^r$ of strictly positive minimum $\nu$-valuation.
\item If $H$ has coefficients integral for the original valuation and $v_t(z_\ell)\ge aq^\ell-C$ with $a>0$, then the transfer identity \eqref{eq:transfer} holds.
\end{enumerate}
\end{lemma}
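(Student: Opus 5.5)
The plan is to verify the four items in order, working in the ultraproduct field $\Omega^{\mathbf N}/\mathcal U$ and using only that $v_t$ is a (rank-one) valuation on $\Omega$ and that ultralimits of real sequences respect sums, minima and nonnegative scalars.

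For (i), I first check that $w$ is a valuation on the ultraproduct field with values in $[-\infty,+\infty]$. Since each $v_t$ satisfies
\[
v_t(x_\ell y_\ell)=v_t(x_\ell)+v_t(y_\ell),\qquad v_t(x_\ell+y_\ell)\ge\min\{v_t(x_\ell),v_t(y_\ell)\},
\]
dividing by $q^\ell$ and taking $\mathcal U$-limits gives $w(xy)=w(x)+w(y)$ and $w(x+y)\ge\min\{w(x),w(y)\}$. This requires the convention that $(+\infty)+(-\infty)$ never arises for a product, which holds because nonzero elements of the field have representatives with $x_\ell\ne0$ on a $\mathcal U$-large set. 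The ring properties of $\mathcal V$ follow, and $\mathcal V$ is a valuation ring: if $w(x)=-\infty$ then $w(x^{-1})=+\infty$, so $x^{-1}\in\mathcal V$. The units of $\mathcal V$ are the elements with $w$ finite, so the non-units are exactly $\mathcal N$, which is therefore the maximal ideal. Then $w$ descends to a real-valued (rank-one) valuation $\nu$ on $\mathcal V/\mathcal N$, extended to its fraction field, which is $\mathcal V/\mathcal N$ itself. The metric completion $\mathcal R$ is then a complete valued field.

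For (ii), I note that a nonzero constant has $v_t(x)$ finite and fixed, so $v_t(x)/q^\ell\to0$. Hence constants lie in $\mathcal V\setminus\mathcal N$, the embedding $\Omega\to\mathcal V/\mathcal N$ is injective, and it is trivially valued.

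For (iii), let $\mu=\min_i\nu(x_i)>0$. Each coefficient of $F$ has valuation $0$ (or is zero), so the degree-$d$ homogeneous part has $\nu\ge d\mu$. The partial sums are therefore Cauchy, and the series converges in the complete field $\mathcal R$.

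For (iv), I truncate $H=H_{<J}+H_{\ge J}$. For the polynomial $H_{<J}$, evaluation commutes with the ultraproduct quotient, since ring operations are coordinatewise, and with the completion. For the tail, integrality of the coefficients gives $v_t(H_{\ge J}(z_\ell))\ge J(aq^\ell-C)$. Hence
\[
w\bigl(\langle H_{\ge J}(z_\ell)\rangle\bigr)\ge Ja,
\]
and likewise $\nu\bigl(H_{\ge J}(\langle z_\ell\rangle)\bigr)\ge Ja$, because $\nu(\langle z_\ell\rangle)\ge a$ and we can apply (iii) with integral coefficients. So the two sides of \eqref{eq:transfer} differ by an element of valuation at least $Ja$ for every $J$, and are therefore equal.

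The main subtlety I expect is in (iv). The sequence $\langle z_\ell\rangle$ must lie in $\mathcal V$, which is guaranteed by $a>0$. The coefficients of $H$ live in $\Omega$ or $B$ and are merely integral, not constant of valuation zero, so (iii) is applied in the form "integral coefficients have $\nu\ge0$". This holds because integral constants have $w\ge0$. A secondary point to be careful about is that elements of $\mathcal N$ map to $0$ in $\mathcal R$, which is consistent with both sides of the identity.
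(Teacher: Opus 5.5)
Your proposal is correct and follows the paper's proof essentially item by item: ultralimits of the valuation axioms for (i), normalized value zero of nonzero constants for (ii), degreewise convergence in the complete field for (iii), and truncation of $H$ together with the integral tail bound $J(aq^\ell-C)$ for (iv). One aside in (i) is false as stated. The case $(+\infty)+(-\infty)$ does arise for products of nonzero elements: $x_\ell=t^{q^{2\ell}}$ and $y_\ell=x_\ell^{-1}$ give $w(x)=+\infty$, $w(y)=-\infty$ and $w(xy)=0$. This does not affect the argument, because you only need multiplicativity for elements of $\mathcal V$, where $w>-\infty$, together with $w(x^{-1})=-w(x)$.
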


\begin{proof}
For a nonzero ultraproduct element $x$, either $w(x)$ is finite or $w(x^{-1})=-w(x)$ is finite.  The ultralimit of the nonarchimedean inequality gives
\[
 w(x+y)\ge\min\{w(x),w(y)\},\qquad w(xy)=w(x)+w(y),
\]
whenever the values are finite.  It follows that $\mathcal V$ is a valuation ring and that the elements of infinite $w$-value form its maximal ideal.  This proves (i).  A nonzero constant sequence has numerator valuation independent of $\ell$, so after division by $q^\ell$ its normalized value is zero; this proves (ii).

For (iii), write
\[
 F(T)=\sum_I a_IT^I
\]
and let $c=\min_j\nu(x_j)>0$.  Since every nonzero coefficient $a_I\in\Omega$ has valuation zero in $\mathcal R$,
\[
 \nu(a_Ix^I)\ge |I|c.
\]
Thus the homogeneous pieces tend to zero uniformly as the total degree tends to infinity, and completeness gives a well-defined value $F(x)$.

For (iv), truncate $H$ in total degree $<J$.  Polynomial evaluation commutes with passage from the ultraproduct valuation ring to the quotient.  Every monomial in the omitted tail has original valuation at least $J(aq^\ell-C)$, so after normalization the difference between the two candidate values has valuation at least $Ja$.  Since $J$ is arbitrary and $\mathcal R$ is separated, the difference is zero.  Part (iii) requires no integrality with respect to the original valuation, whereas part (iv), which interchanges evaluation with passage to sequence classes, does.
\end{proof}

For $\alpha\in\cO_D$ define
\begin{equation}\label{eq:xalpha}
x_\alpha=\langle[p]^\ell c(\alpha)\rangle\in\cH(\m_{\mathcal R}).
\end{equation}
By \eqref{eq:c-growth}, these are positive-valued points. Integral evaluation and \eqref{eq:c-add} give
\begin{equation}\label{eq:xalpha-module}
x_{\alpha+\beta}=x_\alpha+_{\cH}x_\beta,
\qquad
x_{z\alpha}=[z]x_\alpha\quad(z\in\Zp).
\end{equation}

\subsection{Rigidity and density}

We use Chai's rigidity theorem for $p$-divisible formal groups \cite[Theorem 4.3]{ChaiRigidity}. In the form needed here, if a reduced irreducible closed formal subscheme of a smooth $p$-divisible formal group is stable under an open subgroup of scalar units and the scalar representation on the rational endomorphism algebra has no trivial subquotient, then the subscheme is a smooth $p$-divisible formal subgroup. For $\cH_\Omega\simeq H_s^r$, scalar multiplication by $\mathbf G_m$ acts on
\[
\End^0(H_s^r)=M_r(D_s)
\]
by left multiplication, all weights are $1$, and the hypothesis is satisfied.

We also use the Dieudonne--Manin classification \cite[Theorems 6.28--6.30]{Zink}: the slope-$1/s$ isogeny category is semisimple with simple object $H_s$. Consequently, for every proper smooth $p$-divisible formal subgroup $Y\subsetneq H_s^r$, there exists a nonzero formal homomorphism
\[
h:H_s^r\longrightarrow H_s
\]
vanishing on $Y$; a rational projection from the quotient may be multiplied by a large power of $p$ to become integral.

\begin{lemma}[Scalar-stable minimal components]\label{lem:minimal-components-open}
Let $I\subset\Omega[[v_0,\ldots,v_{r-1}]]$ be a radical ideal stable under $\Zp^\times$.  Then each minimal prime of $I$ is stable under an open subgroup of $\Zp^\times$.
\end{lemma}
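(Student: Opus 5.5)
The plan is to use that $\Omega[[v_0,\ldots,v_{r-1}]]$ is Noetherian, so the radical ideal $I$ has only finitely many minimal primes $\mathfrak p_1,\ldots,\mathfrak p_m$, and $I=\mathfrak p_1\cap\cdots\cap\mathfrak p_m$ is the unique irredundant primary decomposition of $I$. Here $\Zp^\times$ acts on the ring by continuous $\Omega$-algebra automorphisms $\lambda\cdot f(v)=f([\lambda]v)$, with $[\lambda]$ scalar multiplication on $\cH_\Omega$.

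The first step is to show that $\Zp^\times$ permutes the set $\{\mathfrak p_1,\ldots,\mathfrak p_m\}$. Take $\lambda\in\Zp^\times$ and let $\sigma_\lambda$ be the corresponding ring automorphism. Then $\sigma_\lambda(\mathfrak p_i)$ is prime. Since $\sigma_\lambda(I)=I$, it contains $I$. It is also minimal over $I$, because $\sigma_\lambda^{-1}$ carries any prime squeezed between $I$ and $\sigma_\lambda(\mathfrak p_i)$ to a prime between $I$ and $\mathfrak p_i$. This gives a group homomorphism $\Zp^\times\to S_m$, and its kernel $U$ has finite index and stabilizes every $\mathfrak p_i$.

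The remaining issue, which is the main obstacle, is that a finite-index subgroup of $\Zp^\times$ need not be open unless the homomorphism is continuous. The stabilizer of each $\mathfrak p_i$ is $\{\lambda:\sigma_\lambda(\mathfrak p_i)=\mathfrak p_i\}$, so it suffices to show this set is closed. A closed subgroup of finite index is open, since its complement is a finite union of closed cosets.

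To prove closedness, I use that $\lambda\mapsto[\lambda]$ is continuous into formal power series with the coefficientwise topology. Each coefficient of $[\lambda](v)$ is a continuous function of $\lambda$ valued in the $t$-adically closed subring $\cO$ (cf. Lemma \ref{lem:honda-integral}). Hence for fixed $f$, every coefficient of $\sigma_\lambda(f)$ depends continuously on $\lambda$. Moreover, ideals of the Noetherian complete local ring $\Omega[[v]]$ are closed in the $(v)$-adic topology (Krull). So for fixed $f\in\mathfrak p_i$ and each $N$, the condition $\sigma_\lambda(f)\in\mathfrak p_i+(v)^N$ is a closed condition on $\lambda$. It is the vanishing of finitely many continuous coefficient functions in the finite-dimensional $\Omega$-vector space $\Omega[[v]]/(\mathfrak p_i+(v)^N)$, and the linear projection to this space is continuous on truncated coefficients.

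Intersecting over $N$ and over a finite generating set of $\mathfrak p_i$ shows that $\{\lambda:\sigma_\lambda(\mathfrak p_i)\subset\mathfrak p_i\}$ is closed. Applying the same argument with $\lambda^{-1}$, and using that inversion is continuous, shows the set where equality holds is closed. That set is the stabilizer subgroup, which therefore has finite index and is closed, hence open. Alternatively, one can bypass continuity: $\Zp^\times$ is topologically finitely generated, and by Serre's theorem finite-index subgroups of finitely generated pro-$p$-by-finite groups are open. I would mention this as a backup route.
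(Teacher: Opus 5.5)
Your proposal is correct and follows essentially the same route as the paper: $\Zp^\times$ permutes the finitely many minimal primes, each stabilizer is closed because ideals of $\Omega[[v]]$ are $(v)$-adically closed, and a closed finite-index subgroup of the compact group $\Zp^\times$ is open. The paper gets closedness by noting that the action on $\Omega[[v]]/(v)^d$ factors through a finite quotient of $\Zp^\times$, because $[p^N]$ has no terms of degree below $q^N$ and so the coefficients of $[\lambda]$ are locally constant in $\lambda$; this is precisely the justification your asserted continuity of $\lambda\mapsto[\lambda]$ needs, and your backup route is also valid and even simpler here, since $\Zp^\times$ is abelian and any subgroup of finite index $e$ contains $(\Zp^\times)^e\supseteq 1+p^c\Zp$ for some $c$, making the kernel of $\Zp^\times\to S_m$ open with no continuity argument at all.
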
\begin{proof}
The group $\Zp^\times$ permutes the finitely many minimal primes. Fix one of them, say $P$. For every $d\ge1$, the action on the quotient $\Omega[[v]]/(v)^d$ factors through a finite quotient of $\Zp^\times$: sufficiently close $p$-adic units induce the same substitution modulo $(v)^d$. Hence the stabilizer of
\[
(P+(v)^d)/(v)^d
\]
is open and closed in $\Zp^\times$. Since ideals in a formal power-series ring are adically closed,
\[
P=\bigcap_d(P+(v)^d),
\]
the stabilizer of $P$ is an intersection of closed subgroups and is therefore closed. Its orbit is contained in the finite set of minimal primes, so the stabilizer has finite index. A closed finite-index subgroup of the compact group $\Zp^\times$ is open.
\end{proof}

\begin{lemma}[A proper isoclinic subgroup is annihilated by a nonzero homomorphism]\label{lem:isoclinic-projection}
Let $Y\subsetneq H_s^r$ be a smooth $p$-divisible formal subgroup over an algebraically closed field of characteristic $p$.  Then there exists a nonzero formal homomorphism
\[
 h:H_s^r\longrightarrow H_s
\]
whose restriction to $Y$ is zero.
\end{lemma}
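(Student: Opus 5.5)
The plan is to argue entirely in the isogeny category of $p$-divisible groups over the algebraically closed field, say $L$, and then clear denominators.

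\textbf{Step 1: realize the inclusion as a morphism of $p$-divisible groups.} Since $Y\subset H_s^r$ is a smooth $p$-divisible formal subgroup, the inclusion $i:Y\hookrightarrow H_s^r$ is a monomorphism of $p$-divisible groups. Its cokernel $Z=H_s^r/Y$ is again a $p$-divisible group, of height $sr-\htp(Y)$. The height of $Y$ is strictly smaller than $sr$: the dimensions are strictly smaller, and the slopes of a subobject of an isoclinic group are $1/s$. Hence $Z$ is nonzero.

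\textbf{Step 2: use the Dieudonn\'e--Manin decomposition.} The slope-$1/s$ isogeny category is semisimple with simple object $H_s$, by the Dieudonn\'e--Manin classification \cite[Theorems 6.28--6.30]{Zink} recalled above. Since $Z$ is a quotient of an isoclinic group of slope $1/s$, it is itself isoclinic of slope $1/s$. Therefore $Z$ is isogenous to $H_s^{m}$, where $m=r-\dim Y\ge1$. Choose a nonzero quasi-homomorphism $Z\to H_s$, for instance an isogeny to $H_s^m$ followed by the first projection. Composing with the quotient map $H_s^r\to Z$ gives a nonzero element
\[
h_0\in\Hom(H_s^r,H_s)[1/p]
\]
whose restriction to $Y$ is zero.

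\textbf{Step 3: clear denominators.} Replace $h_0$ by $h=p^Nh_0$ with $N$ large enough that $h$ is an honest homomorphism of $p$-divisible groups. Such an $h$ is still nonzero, because $\Hom$ is torsion-free, and still vanishes on $Y$. Passing to formal completions gives a formal homomorphism $H_s^r\to H_s$, and since all groups in sight are connected, nothing is lost in this step.

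\textbf{Main obstacle.} The hardest step is Step 1, namely that the formal quotient $H_s^r/Y$ exists as a $p$-divisible group and is nonzero. For this I would use the exactness of the Dieudonn\'e module functor. The Dieudonn\'e module of $Y$ is a saturated $F,V$-stable sublattice $M(Y)\subset M(H_s)^r$; saturation is equivalent to $Y$ being $p$-divisible rather than just a subgroup up to torsion. Saturation makes the quotient lattice $M(H_s)^r/M(Y)$ torsion-free and $F,V$-stable, so it is the Dieudonn\'e module of a $p$-divisible group $Z$. Its rank is $sr-\htp(Y)>0$, because $\dim Y<r$ and height equals $s\cdot\dim$ on the isoclinic side.

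An alternative that avoids quotients is to work with rational Dieudonn\'e modules. There $M(Y)[1/p]$ is a proper $D_s$-sub-isocrystal of $M(H_s)[1/p]^r$, and semisimplicity gives a complement, hence a nonzero projection to one copy of $M(H_s)[1/p]$ killing $M(Y)[1/p]$. Full faithfulness of the Dieudonn\'e functor over the perfect field $L$ then yields the quasi-homomorphism, and Step 3 finishes the proof.
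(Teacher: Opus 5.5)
Your proposal is correct and follows essentially the same route as the paper: pass to Dieudonn\'e modules, use Dieudonn\'e--Manin semisimplicity of the slope-$1/s$ isogeny category to get a nonzero projection from the nonzero quotient $H_s^r/Y$ onto a copy of $H_s$, compose with the quotient map, and multiply by a power of $p$ to make the map integral. Your Step~1 only makes this more explicit by realizing the quotient integrally as a $p$-divisible group, your alternative paragraph is exactly the paper's rational argument, and the one wording slip is harmless: $M(Y)[1/p]$ is a sub-isocrystal but need not be stable under a diagonal $D_s$-action, which your argument never uses.
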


\begin{proof}
Pass to the rational covariant Dieudonne modules.  The module of $H_s^r$ is a direct sum of $r$ copies of the simple slope-$1/s$ object.  The isogeny category in this fixed slope is semisimple by Dieudonne--Manin classification.  Since $Y$ is proper, the quotient is nonzero and has a nonzero projection to a simple slope-$1/s$ factor.  Composing with the quotient map gives a nonzero rational homomorphism $H_s^r\dashrightarrow H_s$ whose restriction to $Y$ is zero. Multiplying by a sufficiently large power of $p$ carries the relevant lattices into one another and yields a nonzero homomorphism of formal groups $H_s^r\to H_s$ with the same vanishing property.
\end{proof}

\begin{proposition}[Formal density of the Honda obstruction periods]\label{prop:density}
If
\[
F\in\Omega[[v_0,\ldots,v_{r-1}]]
\]
satisfies $F(x_\alpha)=0$ for every $\alpha\in\cO_D$, then $F=0$.
\end{proposition}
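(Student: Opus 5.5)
We argue by contradiction, following the outline in step (5) of the architecture. Suppose $F\neq0$ vanishes on every $x_\alpha$. Let $I\subset\Omega[[v_0,\ldots,v_{r-1}]]$ be the radical of the ideal of all series vanishing at every $x_\alpha$. It is nonzero because it contains $F$. First we show $I$ is stable under $\Zp^\times$. By \eqref{eq:xalpha-module}, $[z]x_\alpha=x_{z\alpha}$ for $z\in\Zp$, so the set of points $\{x_\alpha\}$ is stable under every scalar $[z]$. Hence for $G\in I$ the series $G\circ[z]$ again vanishes on all $x_\alpha$. For units $z$, composition with the formal automorphism $[z]$ preserves radicality, so $I$ is $\Zp^\times$-stable. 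By Lemma \ref{lem:minimal-components-open}, each minimal prime $P_i$ of $I$ is stable under an open subgroup of $\Zp^\times$.

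Next we apply rigidity. Work on $\cH_\Omega\simeq H_s^r$ via $\Theta$ of Proposition \ref{prop:theta}. The zero locus $Z_i=V(P_i)$ is a reduced irreducible closed formal subscheme, and it is stable under an open subgroup of scalar units. Chai's rigidity theorem, in the form recorded above, makes each $Z_i$ a smooth $p$-divisible formal subgroup $Y_i$. Each $Y_i$ is proper, since $P_i\supset I\neq0$. Lemma \ref{lem:isoclinic-projection} then supplies nonzero homomorphisms $h_i:\cH_\Omega\to H_{s,\Omega}$ with $h_i|_{Y_i}=0$.

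Every $x_\alpha$ lies in $\bigcup_i Y_i(\m_{\mathcal R})$. Here one must interpret points over $\mathcal R$: the vanishing of the generators of $\prod P_i\subset I$ at $x_\alpha$ forces, since $\mathcal R$ is a field, some $P_i$ to vanish at $x_\alpha$. The main obstacle is the finite-union step, namely producing a single nonzero $h$ that kills all $x_\alpha$. Set $S_i=\{\alpha\in\cO_D: x_\alpha\in Y_i\}$. Because $Y_i$ is a subgroup and $\alpha\mapsto x_\alpha$ is additive, each $S_i$ is a $\Zp$-submodule, and $\cO_D=\bigcup_i S_i$. A finitely generated $\Zp$-module is not a finite union of proper submodules of infinite index. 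I would argue that if every $S_i$ were proper, each $S_i\otimes\Qp$ would be a proper subspace of $D_n$ or have finite index. A $\Qp$-vector space over an infinite field is not a finite union of proper subspaces, so some $S_i$ has finite index, say $p^N\cO_D\subset S_i$. Then $h=h_i$ satisfies $h(x_{p^N\alpha})=0$ for all $\alpha$.

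Finally, we transfer back to the original field. Since $h$ has integral coefficients (Lemma \ref{lem:honda-integral}) and $c(p^\ell\alpha)$ has valuation at least $q^\ell$, the transfer identity \eqref{eq:transfer} gives
\[
h(x_{p^N\alpha})=\bigl\langle h([p]^\ell c(p^N\alpha))\bigr\rangle=\bigl\langle h(c(p^N\alpha))^{q^\ell}\bigr\rangle.
\]
The last equality uses that $h$ is a homomorphism and $[p]_{H_s}(X)=X^q$. The element $h(c(p^N\alpha))\in\Omega$ is a fixed element, so if it were nonzero its $q^\ell$-th power would have normalized valuation $v_t(h(c(p^N\alpha)))$, which is finite. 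So its class in $\mathcal R$ is nonzero. Hence $h(c(p^N\alpha))=0$ for all $\alpha$, i.e. $(h\circ[p^N])(c(\alpha))=0$ for all $\alpha$. The map $h\circ[p^N]$ is still nonzero because $\cH_\Omega$ is $p$-divisible. This contradicts Proposition \ref{prop:period-detect}, so $F=0$.
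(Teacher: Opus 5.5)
Your proposal is correct and follows the paper's proof step for step. The steps are: the $\Zp^\times$-stable radical ideal of equations; openness of the stabilizers of its minimal primes; Chai rigidity together with Dieudonn\'e--Manin to obtain $h_i$ vanishing on each proper component $Y_i$; the finite-union argument giving $p^N\cO_D\subset L_i$; and the transfer $\langle h([p]^\ell c(\beta))\rangle=\langle h(c(\beta))^{q^\ell}\rangle$, which contradicts Proposition~\ref{prop:period-detect}.

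The differences are cosmetic. You remove the factor $p^N$ after the transfer, by applying period detection to the nonzero homomorphism $h\circ[p^N]$, whereas the paper first uses injectivity of $[p]^N_{H_s}$ on $\mathcal R$-points to get $h(x_\alpha)=0$. Your sentence about ``proper subspace or finite index'' is garbled, but the intended argument is exactly the paper's: some $\Qp S_i$ must equal $D_n$, so $S_i$ has full rank and hence finite index.
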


\begin{proof}
Let $I$ be the intersection of the kernels of the evaluation maps at all $x_\alpha$. It is a radical ideal, and \eqref{eq:xalpha-module} makes it stable under $\Zp^\times$. Suppose $I\neq0$, and let $P_1,\ldots,P_m$ be its minimal primes.

By Lemma \ref{lem:minimal-components-open}, each $P_i$ is stable under an open subgroup of $\Zp^\times$.

By Chai's theorem, the reduced irreducible formal subscheme $Y_i$ defined by $P_i$ is a smooth $p$-divisible formal subgroup of $\cH_\Omega$. It is proper because $I\neq0$. By Dieudonne--Manin semisimplicity there exists a nonzero homomorphism
\[
h_i:\cH_\Omega\longrightarrow H_s
\]
vanishing on $Y_i$.

Every $x_\alpha$ lies in at least one $Y_i$, because the kernel of evaluation at $x_\alpha$ is a prime ideal containing $I=\bigcap_iP_i$. Define
\[
L_i=\{\alpha\in\cO_D:x_\alpha\in Y_i\}.
\]
Each $L_i$ is a $\Zp$-submodule and $\bigcup_iL_i=\cO_D$. At least one $L_i$ spans $D_n$ over $\Qp$; otherwise the finite-dimensional $\Qp$-space $D_n$ would be a finite union of proper linear subspaces. Hence for some $i$ there exists $N$ with
\[
p^N\cO_D\subset L_i.
\]
Let $h=h_i$. Then
\[
[p]^Nh(x_\alpha)=h(x_{p^N\alpha})=0
\]
for every $\alpha$. Since $[p]_{H_s}(X)=X^q$ is injective on field-valued points, $h(x_\alpha)=0$ for all $\alpha$.

By Lemma \ref{lem:honda-integral}, $h$ has integral coefficients, so \eqref{eq:transfer} gives
\[
0=h(x_\alpha)
=\left\langle h([p]^\ell c(\alpha))\right\rangle
=\left\langle h(c(\alpha))^{q^\ell}\right\rangle.
\]
If $h(c(\alpha))\neq0$, the normalized valuation of the sequence on the right is the finite constant $v_t(h(c(\alpha)))$, so its class cannot be zero. Thus $h(c(\alpha))=0$ for every $\alpha$, contradicting Proposition \ref{prop:period-detect}. Therefore $I=0$.
\end{proof}

\section{From obstruction parameters to stabilizer displacements}

The density result concerns the auxiliary group $\cH$, whereas the invariant prime is cut out in Lubin--Tate deformation coordinates. We now compare these two coordinate systems at fixed Taylor order; Appendix \ref{app:uniform} supplies the required uniform estimates.

For $\alpha\in\cO_D$ and $\ell$ sufficiently large put
\begin{equation}\label{eq:gell}
g_\ell=\exp(p^\ell\alpha)\in\Sstab_n,
\qquad
v_\ell=c(g_\ell-1)\in\cH(tB).
\end{equation}
The $p$-adic exponential is the convergent series
\[
 \exp(x)=\sum_{m\ge0}\frac{x^m}{m!}
\]
in the finite-dimensional complete $\Qp$-algebra $D_n$.  Since $\alpha\in\cO_D$, the $m$th term at $x=p^\ell\alpha$ has valuation at least
\[
 \ell m-v_p(m!),
\]
which tends to $+\infty$ once $\ell>1/(p-1)$.  Thus $g_\ell$ is defined for all sufficiently large $\ell$, belongs to $1+p\cO_D\subset\cO_D^\times$, and tends to the identity.  We will also use the uniform first-order expansion
\begin{equation}\label{eq:exp-remainder}
g_\ell-1=p^\ell\alpha+p^{2\ell-1}\zeta_\ell,
\qquad
\zeta_\ell\in\cO_D.
\end{equation}
Indeed, for $m\ge2$ the valuation of $p^{m\ell}/m!$ is at least $2\ell-1$ when $\ell$ is sufficiently large; this is immediate from Legendre's bound $v_p(m!)\le(m-1)/(p-1)$.  The same estimate covers $p=2$ after increasing $\ell$.

\begin{theorem}[Fixed-jet comparison with the stabilizer action]\label{thm:fixed-jet}
There exists a formal map
\begin{equation}\label{eq:delta}
\delta(v)\in(v)K[[v_0,\ldots,v_{r-1}]]^r,
\qquad
d_0\delta\in\mathrm{GL}_r(B),
\qquad
d_0\delta\equiv I_r\pmod t,
\end{equation}
depending only on the chosen curve $u=(u_s,\ldots,u_{n-1})$, with the following property. For every fixed $J\geq1$ and every fixed $\alpha\in\cO_D$, for all sufficiently large $\ell$,
\begin{equation}\label{eq:fixed-jet}
v_t\bigl(g_\ell u-u-\delta^{(<J)}(v_\ell)\bigr)
\geq Jq^\ell-C_{\alpha,J},
\end{equation}
where $\delta^{(<J)}$ denotes the terms of total degree $<J$. In addition
\begin{equation}\label{eq:displacement-growth}
v_t(g_\ell u-u)\geq q^\ell.
\end{equation}
In the asymptotic field $\mathcal R$,
\begin{equation}\label{eq:asymp-delta}
\langle g_\ell u-u\rangle=\delta(x_\alpha).
\end{equation}
\end{theorem}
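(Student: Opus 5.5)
The plan is to solve the Cartier gauge equation that expresses $g_\ell$ acting on the curve. Write $g=g_\ell$ and let $u'=gu$ be the transformed deformation parameters. By definition of the stabilizer action, $g$ lifts to an isomorphism from the deformation with parameters $u$ to the one with parameters $u'$. On Cartier modules this is a curve $a_g\in M$ reducing to $g_*(e)$ modulo $t$ and satisfying
\[
D_{u'}a_g=0 .
\]
Here $D_{u'}$ is the structure operator \eqref{eq:D} with $u$ replaced by $u'$. Writing $u'=u+\Delta$ with $\Delta=(\Delta_s,\ldots,\Delta_{n-1})$, we get
\[
D_{u'}=D_u-\sum_{i=s}^{n-1}V^{i-1}\bigl([u_i+\Delta_i]-[u_i]\bigr).
\]
The equation then reads
\[
D_u a_g=\sum_i V^{i-1}\bigl([u_i+\Delta_i]-[u_i]\bigr)a_g .
\]
Apply $\rho$ and use Definition \ref{def:obstruction} and Lemma \ref{lem:obstruction}. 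Writing $a_g=e+a_{g-1}$, this gives
\[
v_\ell=c(g_\ell-1)=\rho\Bigl(\sum_i V^{i-1}\bigl([u_i+\Delta_i]-[u_i]\bigr)(e+a_{g-1})\Bigr).
\]
The right side is a formal expression in $\Delta$ that takes values in $\cH$ (since $i-1\ge s-1$). Its linear term in $\Delta$, modulo $t$, is $\Delta\mapsto(\Delta_s,\ldots,\Delta_{n-1})$ placed in coordinates $V^{s-1},\ldots,V^{n-2}$, up to normal-form corrections divisible by $t$. So I expect a formal map
\[
\Psi(\Delta)=v,\qquad d_0\Psi\equiv I_r \pmod t .
\]
I then define $\delta=\Psi^{-1}$ by the formal inverse function theorem over $K$, and check that $d_0\delta\in\mathrm{GL}_r(B)$.

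\textbf{The main obstacle.} The relation above is not a closed formula in $\Delta$ alone. It involves the unknown curve $a_{g-1}$, whose coordinates depend on $\ell$, and the Teichmüller differences $[u_i+\Delta_i]-[u_i]$ carry Witt carries mixing all $V$-coordinates. This is the passage from infinitely many Cartier coordinates to finite jets flagged in the AI-use statement. To handle it, I will first prove the growth bound \eqref{eq:displacement-growth}. For this I use \eqref{eq:exp-remainder} together with \eqref{eq:c-growth}: $c(g_\ell-1)$ has valuation $\ge q^\ell$, and $\Delta$ is recovered from $v_\ell$ via the contracting Neumann inversion of Lemma \ref{lem:normal-form}. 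I will bootstrap: assume $v_t(\Delta)\ge \epsilon q^\ell$ and improve to $q^\ell$ using that the linear map is invertible mod $t$.

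With $v_t(\Delta)\ge q^\ell$, fix $J$. Every term of total degree $\ge J$ in $\Delta$ has valuation $\ge Jq^\ell$. The cross terms involving $a_{g-1}$ have $a_{g-1}$ lying within $M(t)$-corrections of $\lambda(g-1)$. By the continuity of $\lambda$ (the fact that $pM\subset VM$), $p^\ell$-divisibility pushes these corrections into high $V$-coordinates. Using \eqref{eq:pVm}, each application of $p$ multiplies coefficient valuations by $q$, so these contributions also exceed $Jq^\ell-C_{\alpha,J}$ once only finitely many coordinates matter. Zink's finiteness (each fixed output coordinate is an integral polynomial in finitely many inputs) is what makes the truncation to degree $<J$ uniform, with a constant depending only on $\alpha$ and $J$. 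This yields \eqref{eq:fixed-jet}: $\Psi^{(<J)}$ agrees with the true relation to that order, and inverting the truncation gives $\delta^{(<J)}$.

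\textbf{Passing to the asymptotic field.} Finally, \eqref{eq:asymp-delta} follows by passing to $\mathcal R$. Since $v_\ell$ and $[p]^\ell c(\alpha)$ differ by terms of normalized valuation strictly larger (by \eqref{eq:exp-remainder} and additivity of $c$), we get
\[
\langle v_\ell\rangle=x_\alpha .
\]
By \eqref{eq:fixed-jet}, $\langle g_\ell u-u\rangle$ and $\delta^{(<J)}(x_\alpha)$ differ by an element of valuation $\ge J$. The transfer identity of Lemma \ref{lem:asymptotic-evaluation}(iv) applies to the polynomial truncations; the $K$-coefficients of $\delta$ have bounded denominators at each degree, which are absorbed into $C_{\alpha,J}$. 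Letting $J\to\infty$ gives $\langle g_\ell u-u\rangle=\delta(x_\alpha)$.
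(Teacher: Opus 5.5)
\textbf{There is a genuine gap, at exactly the point you flag as the main obstacle.}

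\emph{$\Psi$ is not a formal map.} The relation you write for $v_\ell$ holds only at the single point $\Delta=g_\ell u-u$. A curve $a\equiv g_*(e)\pmod t$ with $D_{u+\Delta}a=0$ is an isomorphism of deformations, so by universality it exists only for that $\Delta$. The relation also involves the $\ell$-dependent curve $a_{g_\ell-1}$. So $\Psi$ is not a formal map in $\Delta$, and ``$\delta=\Psi^{-1}$'' has no meaning.

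\emph{The cross terms are not negligible.} Your workaround is to declare the $a_{g_\ell-1}$-dependent terms negligible at order $Jq^\ell$, and this is false. It is true that $\lambda(g_\ell-1)\in p^\ell M$ lives in high $V$-coordinates. But the low coordinates of $a_{g_\ell-1}$ (that is, of $h_\ell$ plus the gauge correction) are governed by the small roots of \eqref{eq:triangular-H}. For example, the zeroth one is roughly $-v_{\ell,0}/u_s$, so these coordinates have valuation $v_t(v_\ell)-O(1)$, not $Jq^\ell$. Since
$V^{i-1}([u_i+\Delta_i]-[u_i])V^m[c_m]e=V^{i-1+m}([(u_i+\Delta_i)^{p^m}]-[u_i^{p^m}])[c_m]e$
has leading coordinate $\Delta_i^{p^m}c_m$, the $m=0$ term already contributes $\Delta_ic_0$ at valuation about $2q^\ell$. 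That is degree two in $x_\alpha$ after renormalization.

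Consequently your argument controls \eqref{eq:fixed-jet} only for $J\le 2$. Lemma \ref{lem:fixed-jet-transfer} needs every $J$ to obtain $d_\alpha=\delta(x_\alpha)$ exactly. These cross terms are precisely the nonlinear part of $\delta$ and must be built into it.

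Zink's finiteness does not rescue this. It controls each fixed output coordinate, not the infinitely many coordinates uniformly, which is what the valuation \eqref{eq:uniform-valuation} requires.

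\textbf{The missing idea} is to take $v$, not $\Delta$, as the formal variable and to express the lift itself as a formal function of $v$. The paper does this in four steps.
\begin{enumerate}
\item Solve the first $M$ equations of $DH=s_{\cH}(v)$ by the small roots of \eqref{eq:triangular-H}. This gives $\eta_M(v)$ with $K$-coefficients, which is where the poles of $\delta\in K[[v]]$ come from.
\item Prove $h_\ell-\eta_M(v_\ell)\in V^MM(B)$ for large $\ell$. A competing root differs by a solution of $u_sx^q=u_s^{p^m}x$ of fixed valuation $(p^m-1)v_t(u_s)/(q-1)$, while the coordinates of $h_\ell$ tend to zero.
\item Solve the truncated gauge equation $\mathcal G_{v,\eta_M(v)}(A,d)=0$ formally in $v$.
\item Use the tail ideal $V^{-(s-1)}T_d(V^MZ)\in M(d_s^{p^M},\ldots,d_{n-1}^{p^M})$ twice. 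It gives stabilization of jets modulo $(v)^{p^M}$. Through the isometry of Lemma \ref{lem:uniform-inverse}, it also gives the comparison with the exact gauge, uniformly in the output position.
\end{enumerate}

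That isometry also yields \eqref{eq:displacement-growth} directly. Your bootstrap for that bound has no initial estimate $v_t(\Delta)\ge\epsilon q^\ell$ to start from. Your final passage to $\mathcal R$ is fine once \eqref{eq:fixed-jet} is known for all $J$.
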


\begin{proof}
We use the uniform tail estimate proved in Appendix \ref{app:uniform}. Choose a continuous additive $\Zp$-linear lift
\[
\lambda:\cO_D\longrightarrow M
\]
of special-fiber endomorphism curves. Normal form gives
\begin{equation}\label{eq:gauge-source}
D\lambda(g_\ell-1)=DB_\ell+s_{\cH}(v_\ell),
\qquad
h_\ell=\lambda(g_\ell-1)-B_\ell,
\end{equation}
so that
\[
Dh_\ell=s_{\cH}(v_\ell).
\]
The growth estimate \eqref{eq:c-growth} and \eqref{eq:exp-remainder} imply
\begin{equation}\label{eq:vl-growth}
v_t(v_\ell)\geq q^\ell,
\end{equation}
and every fixed initial Cartier coordinate of $h_\ell$ tends to zero. The latter follows from $p^\ell M\subset V^\ell M$.

Let $A$ be a positive-valuation correction of the Cartier generator and let
\[
d=(d_s,\ldots,d_{n-1})
\]
be a displacement of deformation parameters. Write
\[
T_d=D_{u+d}-D_u.
\]
The condition that the corrected generator satisfy the displaced structure equation is
\begin{equation}\label{eq:gauge-equation}
DA+s_{\cH}(v_\ell)+T_d(e+h_\ell+A)=0.
\end{equation}
After cancelling the common factor $V^{s-1}$, the linearization modulo $t$ is the isometric bijection
\begin{equation}\label{eq:L0}
L_0(A,d)=(-d_s,\ldots,-d_{n-1},-a_0,-a_1,\ldots).
\end{equation}
Thus the exact positive-valuation solution of \eqref{eq:gauge-equation} is unique and satisfies the coarse bound \eqref{eq:displacement-growth}.

The difficulty is that $h_\ell$ has infinitely many Cartier coordinates, so coordinatewise convergence is insufficient. For a fixed integer $M$, solve only the first $M$ equations of
\[
DH=s_{\cH}(v),
\]
obtaining formal functions $H_0(v),\ldots,H_{M-1}(v)$, and set
\[
\eta_M(v)=\sum_{m=0}^{M-1}V^m[H_m(v)]e.
\]
For fixed $M$ and sufficiently large $\ell$ one has
\begin{equation}\label{eq:h-truncation}
h_\ell-\eta_M(v_\ell)\in V^MM(B).
\end{equation}
The key uniform tail identity is
\begin{equation}\label{eq:tail-identity}
T_d(V^MZ)
=-\sum_{i=s}^{n-1}V^{M+i-1}
\left([u_i^{p^M}+d_i^{p^M}]-[u_i^{p^M}]\right)Z.
\end{equation}
Hence every output coordinate belongs to the ideal
\[
(d_s^{p^M},\ldots,d_{n-1}^{p^M}).
\]
This controls all output positions simultaneously, including all Witt carries.

Replace $h_\ell$ in \eqref{eq:gauge-equation} by $\eta_M(v)$ but keep the exact source $s_{\cH}(v)$. The resulting truncated equation has a unique formal solution
\[
(A_M(v),\delta_M(v)).
\]
After a finite $t$-adic rescaling, all coefficients are integral uniformly in the output position. If $N\geq M$, the residual obtained by inserting the $N$th truncated solution into the $M$th equation begins in total degree $p^M$ by \eqref{eq:tail-identity}; invertibility of the linearized operator gives
\begin{equation}\label{eq:delta-stabilize}
(A_N,\delta_N)\equiv(A_M,\delta_M)\pmod{(v)^{p^M}}.
\end{equation}
Thus every finite Taylor jet stabilizes, defining the formal series $\delta$ in \eqref{eq:delta}. The degree-one equation gives $d_0\delta\equiv I_r\pmod t$.

Finally compare the truncated solution evaluated at $v_\ell$ with the exact solution of \eqref{eq:gauge-equation}. Combining \eqref{eq:h-truncation}, \eqref{eq:tail-identity}, the uniform inverse estimate and \eqref{eq:delta-stabilize} yields \eqref{eq:fixed-jet}. By uniqueness of the positive gauge, the exact displacement is
\[
d_\ell=g_\ell u-u.
\]
Moreover \eqref{eq:exp-remainder}, additivity of $c$, and \eqref{eq:c-growth} give
\[
v_\ell=[p]^\ell c(\alpha)+_{\cH}[p]^{2\ell-1}c(\zeta_\ell),
\]
and the second term has normalized valuation tending to $+\infty$. Hence
\[
\langle v_\ell\rangle=x_\alpha.
\]
Letting $J\to\infty$ in the asymptotic field proves \eqref{eq:asymp-delta}.
\end{proof}

\begin{remark}
The map $\delta$ is a formal map, not a homomorphism of formal groups. Its source coordinates are the obstruction coordinates on $\cH$, and its target coordinates are the Lubin--Tate parameter displacements. The invertibility of $d_0\delta$ says that these two coordinate systems agree to first order.
\end{remark}

\begin{lemma}[Fixed-jet transfer to an invariant equation]\label{lem:fixed-jet-transfer}
Let $u\in(tB)^r$, let $0\neq f\in k[[X_1,\ldots,X_r]]$ satisfy $f(u)=0$, and let
\[
 \delta(T)\in(T)K[[T]]^r
\]
have invertible Jacobian at the origin.  Suppose that for each $\alpha$ there are positive tuples $v_{\alpha,\ell}$ and $d_{\alpha,\ell}$ such that
\[
 \langle v_{\alpha,\ell}\rangle=x_\alpha,
 \qquad f(u+d_{\alpha,\ell})=0,
\]
and, for every fixed $J$,
\[
 v_t\bigl(d_{\alpha,\ell}-\delta^{(<J)}(v_{\alpha,\ell})\bigr)
 \ge Jq^\ell-C_{\alpha,J}
\]
for all sufficiently large $\ell$.  Assume also that $v_t(v_{\alpha,\ell})$ and $v_t(d_{\alpha,\ell})$ are bounded below by $q^\ell$ up to constants.  Then
\[
 F(T)=f(u+\delta(T))\in K[[T]]
\]
is nonzero and satisfies $F(x_\alpha)=0$ for every $\alpha$.
\end{lemma}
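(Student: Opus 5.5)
The proof has two parts: nonvanishing of $F$, and the vanishing $F(x_\alpha)=0$.

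\emph{Nonvanishing.} Since $\delta$ has no constant term and $u\in(tB)^r$, each coefficient of $F(T)=f(u+\delta(T))$ is computed from finitely many terms. The series $f(u+Y)$ is a nonzero element of $K[[Y]]$. Indeed, $f(u+Y)=0$ would force $f(X)=0$ after the invertible translation $X=u+Y$, and this translation is legitimate over $B$ because $u$ has positive valuation. Since the Jacobian of $\delta$ at the origin is invertible, the formal inverse function theorem over $K$ makes $T\mapsto\delta(T)$ an automorphism of $K[[T]]$. Hence $F=f(u+Y)\circ\delta$ is nonzero. A small point to check is that $f(u+Y)$ makes sense with $Y$ formal. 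This holds because $f\in k[[X]]$ and the entries of $u$ lie in $tB$, so the substitution converges $t$-adically coefficientwise.

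\emph{Vanishing at $x_\alpha$.} Fix $\alpha$ and a large $J$, and write $d_\ell=d_{\alpha,\ell}$, $v_\ell=v_{\alpha,\ell}$. The hypothesis $f(u+d_\ell)=0$ is the starting point. Set $G(Y)=f(u+Y)$, which has $B$-integral coefficients, and write $d_\ell=\delta^{(<J)}(v_\ell)+\epsilon_\ell$ with $v_t(\epsilon_\ell)\ge Jq^\ell-C$.

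Expanding $G$ around $\delta^{(<J)}(v_\ell)$ gives
\[
0=G(d_\ell)=G\bigl(\delta^{(<J)}(v_\ell)\bigr)+O(\epsilon_\ell).
\]
The error term has valuation at least $Jq^\ell-C$, because $G$ is integral and both arguments have positive valuation. Passing to the asymptotic field, the error class has $\nu$-valuation at least $J$.

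The coefficients of $\delta$ lie in $K$ rather than $B$, so only a bounded power of $t$ appears in their denominators within each fixed degree. Concretely, $t^{c_J}\delta^{(<J)}$ is integral. Together with $v_t(v_\ell)\ge q^\ell-C'$, this gives $v_t(\delta^{(<J)}(v_\ell))\ge q^\ell-C''$, and the transfer identity of Lemma~\ref{lem:asymptotic-evaluation}(iv) applies to $G$ at these points. Because $t$ has normalized valuation $0$, the polynomial $\delta^{(<J)}$ commutes with taking classes. Therefore
\[
\bigl\langle G(\delta^{(<J)}(v_\ell))\bigr\rangle=G\bigl(\delta^{(<J)}(x_\alpha)\bigr).
\]
This step needs $\langle v_\ell\rangle=x_\alpha$, which is a hypothesis.

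Combining the two displays, $\nu\bigl(G(\delta^{(<J)}(x_\alpha))\bigr)\ge J$.

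Finally, Lemma~\ref{lem:asymptotic-evaluation}(iii) gives
\[
G(\delta^{(<J)}(x_\alpha))-F(x_\alpha)=G(\delta^{(<J)}(x_\alpha))-G(\delta(x_\alpha)).
\]
This difference has valuation at least $J\cdot\nu(x_\alpha)$ up to a constant, since $\delta-\delta^{(<J)}$ has order at least $J$ and $\nu(x_\alpha)\ge1>0$. Hence $\nu(F(x_\alpha))\ge\min(J,\,J\nu(x_\alpha))\to\infty$ as $J\to\infty$, and so $F(x_\alpha)=0$.

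\emph{Main obstacle.} The delicate step is the transfer interchange. The lemma assumes integrality, but $\delta$ has $K$-coefficients whose $t$-denominators may grow with the degree. The plan is to handle this only at fixed $J$, applying transfer to the integral series $G$ and to the rescaled polynomial $t^{c_J}\delta^{(<J)}$ separately. Growth of the constants $C$ with $J$ causes no harm, because they are killed by the division by $q^\ell$.
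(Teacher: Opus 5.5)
Your proof is correct and takes essentially the paper's approach. Nonvanishing comes from the same two automorphisms: translation by $u\in(tB)^r$ on $B[[X]]$, and substitution by $\delta$ on $K[[T]]$. Vanishing rests on the same facts: polynomials with $K$-coefficients commute with taking classes, the transfer identity of Lemma~\ref{lem:asymptotic-evaluation}(iv), and the limit $J\to\infty$. The only difference is bookkeeping. The paper first shows $\langle d_{\alpha,\ell}\rangle=\delta(x_\alpha)$ in $\mathcal R$ and applies transfer to $f(u+X)$ at the exact zeros $d_{\alpha,\ell}$. You apply transfer at the approximate points $\delta^{(<J)}(v_\ell)$ and absorb the error $t$-adically through the integrality of $G$, at the harmless cost of the fixed-$J$ denominator bound on $\delta^{(<J)}$.
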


\begin{proof}
Put $d_\alpha=\langle d_{\alpha,\ell}\rangle$.  For every fixed $J$, the approximation hypothesis gives
\[
 \nu\bigl(d_\alpha-\delta^{(<J)}(x_\alpha)\bigr)\ge J.
\]
Since $K\subset\mathcal R$ is trivially valued, the finite polynomial $\delta^{(<J)}$ is defined at $x_\alpha$; no integrality with respect to the original $t$-adic valuation is required.  Letting $J\to\infty$ and using completeness gives
\[
 d_\alpha=\delta(x_\alpha).
\]

Set $H(X)=f(u+X)$.  Since $u\in(tB)^r$, translation by $u$ is an automorphism of $B[[X]]$ for the $(t,X)$-adic topology, so $H$ is a nonzero integral series.  The exact vanishing $f(u+d_{\alpha,\ell})=0$ and the transfer identity of Lemma \ref{lem:asymptotic-evaluation} imply
\[
 H(d_\alpha)=0.
\]
Therefore $F(x_\alpha)=H(\delta(x_\alpha))=0$.  Finally, the invertible Jacobian of $\delta$ gives a formal inverse, so substitution by $\delta$ is an automorphism of $K[[T]]$.  Thus $F=H\circ\delta$ is nonzero.
\end{proof}

\section{Classification on the special fiber}

We now combine the formal density with the stabilizer action.

\begin{theorem}[Invariant primes on $A_n$]\label{thm:special}
For every open subgroup $U\subset\Sstab_n$, the $U$-stable prime ideals of $A_n$ are exactly
\[
J_0,J_1,\ldots,J_{n-1}.
\]
\end{theorem}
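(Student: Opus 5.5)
The plan is to assemble the results already proved. First, the ideals $J_j$ are $U$-stable primes, as noted in Section 2: $A_n/J_j$ is a power-series domain, and $J_j$ is invariant under the full stabilizer. It remains to show that every $U$-stable prime $P\subset A_n$ is one of them.

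Let $s$ be the generic formal height of $P$. If $s=n$, then $u_1,\ldots,u_{n-1}\in P$, so $P\supseteq J_{n-1}$. Since $A_n/J_{n-1}=\mathbf F_{p^n}$ is a field, $P=J_{n-1}$, and we may assume $s<n$. Then \eqref{eq:height-condition} gives $J_{s-1}\subset P$ and $u_s\notin P$. Suppose for contradiction that the inclusion is strict.

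Because $A_n/J_{s-1}\cong\mathbf F_{p^n}[[u_s,\ldots,u_{n-1}]]$, we can choose $f\in P\setminus J_{s-1}$ in the variables $u_s,\ldots,u_{n-1}$, viewed as a nonzero element of $k[[X_1,\ldots,X_r]]$ after base change. Apply Lemma \ref{lem:curve} to $P$ to obtain the test curve $u=(u_s,\ldots,u_{n-1})\in(tB)^r$ with $f(u)=0$, and fix the resulting $K$, $r$, $q$.

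Next use the stabilizer action. For $\alpha\in\cO_D$ and $\ell$ large, $g_\ell=\exp(p^\ell\alpha)$ tends to $1$, so $g_\ell\in U$ because $U$ is open. Hence $g_\ell(f)\in P$ in the deformation ring, and restricting to the curve, as explained after \eqref{eq:ce}, gives $f(g_\ell u)=0$. Two points need care here.

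\begin{itemize}
\item The action of $g_\ell$ on $A_n$ must correspond to moving the parameter $u$ to $g_\ell u$ in the sense used in Theorem \ref{thm:fixed-jet}. That is, $g_\ell u$ should mean the parameter of the deformation obtained by changing the marking by $g_\ell$, which is exactly the exact gauge solution $d_\ell$ in that proof.
\item The Galois part and coordinate conventions must not matter. I would take $U\subset\Sstab_n$ as stated, and absorb the sign convention into the remark at the end of Lemma \ref{lem:obstruction-evaluation}.
\end{itemize}

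Now set $d_{\alpha,\ell}=g_\ell u-u$ and $v_{\alpha,\ell}=v_\ell=c(g_\ell-1)$. Theorem \ref{thm:fixed-jet} supplies the following.
\begin{itemize}
\item A formal map $\delta$ with invertible linear term.
\item The fixed-jet estimate \eqref{eq:fixed-jet}.
\item The growth bounds \eqref{eq:displacement-growth} and \eqref{eq:vl-growth}.
\item The identity $\langle v_\ell\rangle=x_\alpha$.
\end{itemize}
These are precisely the hypotheses of Lemma \ref{lem:fixed-jet-transfer}. That lemma yields a nonzero series $F(T)=f(u+\delta(T))\in K[[T]]\subset\Omega[[v_0,\ldots,v_{r-1}]]$ with $F(x_\alpha)=0$ for every $\alpha\in\cO_D$. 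Proposition \ref{prop:density} then forces $F=0$, a contradiction. Hence $P=J_{s-1}$. Note that $s\ge1$ and $j=s-1$ ranges over $0,\ldots,n-1$, so every case is covered.

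I expect the main obstacle to be the bookkeeping step identifying the invariance statement $g(f)\in P$ with the vanishing $f(g_\ell u)=0$ at the parameter produced by the Cartier gauge equation. One has to check that the stabilizer's action on $R_n$ and $A_n$, through automorphisms of the universal deformation, matches the deformation obtained by relifting $\alpha$-twisted generators, compatibly with base change to the curve. I would first verify this at the level of the universal deformation over $A_n$, where the displacement is canonical, and only then specialize to $B$.

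A smaller point is to confirm that $f$ may be taken with coefficients in $k$ and independent of $\ell$. This holds because $f$ lies in $A_n/J_{s-1}$ and has coefficients in $\mathbf F_{p^n}\subset k$.
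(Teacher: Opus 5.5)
Your proposal is correct and follows essentially the same route as the paper's proof. Both arguments dispose of the height-$n$ case, pick $0\neq f\in P/J_{s-1}$, specialize via Lemma~\ref{lem:curve}, use $g_\ell=\exp(p^\ell\alpha)\in U$ to get $f(g_\ell u)=0$, and then feed Theorem~\ref{thm:fixed-jet} into Lemma~\ref{lem:fixed-jet-transfer} to contradict Proposition~\ref{prop:density}; the identification of $g_\ell u$ with the exact gauge solution, which you flag as the main bookkeeping issue, is handled in the paper inside Theorem~\ref{thm:fixed-jet} (see \S\ref{subsec:full-honda-appendix}).
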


\begin{proof}
The case $n=1$ is immediate. Let $P\subset A_n$ be a $U$-stable prime. If its generic formal height is $n$, then all deformation parameters vanish in $A_n/P$, so $P=J_{n-1}$.

Suppose the generic formal height is $s<n$. Then $J_{s-1}\subset P$ and $u_s\notin P$. Assume the containment is strict. Choose
\[
0\neq f\in P/J_{s-1}
\subset \mathbf F_{p^n}[[u_s,\ldots,u_{n-1}]].
\]
Apply Lemma \ref{lem:curve}. We continue to write $u=(u_s,\ldots,u_{n-1})\in(tB)^r$ for the resulting point on the chosen curve.

Fix $\alpha\in\cO_D$ and let $g_\ell=\exp(p^\ell\alpha)$. Since $g_\ell\to1$ and $U$ is open, $g_\ell\in U$ for all sufficiently large $\ell$. Because $P$ is invariant,
\[
g_\ell(f)\in P.
\]
Restricting only now to the chosen curve gives
\begin{equation}\label{eq:fgu}
f(g_\ell u)=0.
\end{equation}
Put
\[
 v_{\alpha,\ell}=c(g_\ell-1),
 \qquad
 d_{\alpha,\ell}=g_\ell u-u.
\]
Equation \eqref{eq:fgu} is exactly
\[
 f(u+d_{\alpha,\ell})=0.
\]
Theorem \ref{thm:fixed-jet} supplies the required positive-depth bounds, the fixed-jet approximation, and the identity
\[
 \langle v_{\alpha,\ell}\rangle=x_\alpha.
\]
Therefore Lemma \ref{lem:fixed-jet-transfer} produces a nonzero series
\[
 F(v)=f(u+\delta(v))\in K[[v_0,\ldots,v_{r-1}]]
\]
which vanishes at every $x_\alpha$.  Extending coefficients to $\Omega$ preserves nonzeroness, contradicting Proposition \ref{prop:density}.  Hence $P=J_{s-1}$.

The ideals $J_j$ were already seen to be prime and invariant, completing the classification.
\end{proof}

\section{The mixed-characteristic generic fiber}

We next classify stable primes of $R_n$ which avoid $p$. This part is independent of the Cartier-density argument.

\begin{lemma}[A nonzero horizontal prime has a proper generic fiber]\label{lem:proper-generic}
Let $0\neq P\subset R_n$ be prime with $p\notin P$ and $n\geq2$. Then the reduced rigid generic fiber of $R_n/P$, after base change to $\Cp$, is nonempty, has dimension at most $n-2$, and contains a smooth $\Cp$-point.
\end{lemma}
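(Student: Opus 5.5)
The plan is to pass from $R_n/P$ to a formal scheme over $\Zp$-adic rings and use Berthelot's rigid generic fiber. Write $S=R_n/P$. Since $P$ is prime and $p\notin P$, $S$ is a complete local noetherian domain that is $p$-torsion free. Its Krull dimension is at most $n-1$, because $\dim R_n=n$ and $P\neq0$. The rigid generic fiber $\mathcal X=(\Spf S)^{\mathrm{rig}}$ is the closed analytic subspace of the open unit polydisc $\mathbf D^{n-1}$ over $W(\mathbf F_{p^n})[1/p]$ cut out by $P$.

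\emph{Nonemptiness and points.} Because $p\notin P$, the element $p$ is a nonzero nonunit of the domain $S$. I would take a minimal prime $\mathfrak p$ of $S[1/p]$, or equivalently a prime $\mathfrak q\subset S$ maximal among those avoiding $p$. Standard commutative algebra for complete local rings (for instance via Noether normalization over $W$, making $S$ finite over $W[[x_1,\ldots,x_{d-1}]]$ with $d=\dim S$) shows that $S/\mathfrak q$ is finite over $W$ and one-dimensional. It is therefore an order in a finite extension of $W[1/p]$, and embedding it into $\Cp$ gives a $\Cp$-point of $\mathcal X$ whose coordinates have positive valuation. Hence the base change $\mathcal X_{\Cp}$ is nonempty. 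Concretely, the Noether normalization $W[[x_1,\ldots,x_{d-1}]]\to S$ is finite and injective, so every point of the open polydisc of dimension $d-1$ lifts.

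\emph{Dimension bound.} The same finite map exhibits $\mathcal X$ as finite over the $(d-1)$-dimensional open polydisc, where $d=\dim S\le n-1$. Thus every irreducible component of $\mathcal X$ has dimension at most $d-1\le n-2$. I would state this through de Jong's description of $\mathcal X$ as an increasing union of affinoids $\mathrm{Sp}\,(S\langle p^{-1}x^{N}\rangle)$-type pieces, each finite over a closed polydisc of dimension $d-1$. This dimension count is preserved by base change to $\Cp$.

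\emph{Smooth point.} Pass to the reduced structure, which the statement already does. A reduced rigid space over the perfect field $\Cp$ (characteristic zero) has a dense Zariski-open smooth locus, since the singular locus is a proper closed analytic subset of each component; this is excellence of affinoid algebras together with generic smoothness in characteristic zero. Since $\mathcal X_{\Cp}$ is nonempty, it therefore has a smooth $\Cp$-point, and rigid points are dense, so we may take that point to be $\Cp$-rational.

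The main obstacle is the nonemptiness step. We must make sure that removing $p$ does not kill everything, i.e. that $p$ is not nilpotent in $S$ and that $S[1/p]$ has a maximal ideal with residue field finite over $\Qp$. I would settle this via the finite Noether normalization over $W[[x]]$ with $p$ part of a system of parameters. I would also check the degenerate case $d=1$, where $S$ is itself finite over $W$; there $\mathcal X$ is a nonempty finite set of points, which is smooth of dimension $0\le n-2$ because $n\ge2$.
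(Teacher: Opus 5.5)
Your proposal is correct and follows essentially the same route as the paper. Both use a finite injective Noether normalization $W[[T_2,\ldots,T_d]]\to S$ with $p$ in a system of parameters, a prime of $S$ whose quotient is finite over $W$ to produce a point, finiteness over the $(d-1)$-dimensional open polydisc for the dimension bound, and generic smoothness in characteristic zero for the smooth point. One small slip: the point comes from a maximal ideal of $S[1/p]$, which is the same as a prime of $S$ maximal among those avoiding $p$ (or, as in the paper, a prime lying over $(T_2,\ldots,T_d)$). It does not come from a minimal prime of $S[1/p]$, which is just $0$ since $S$ is a domain.
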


\begin{proof}
Put $S=R_n/P$ and $d=\dim S$.  Since $P\neq0$ in the regular local ring $R_n$ of dimension $n$, one has $d\le n-1$.  The element $p$ is a nonzero nonunit of $S$, so it may be completed to a system of parameters
\[
 p,b_2,\ldots,b_d.
\]
Let $\cO_0=W(\mathbf F_{p^n})$ and consider
\[
 A_0=\cO_0[[T_2,\ldots,T_d]]\longrightarrow S,
 \qquad T_i\longmapsto b_i.
\]
Because $(p,b_2,\ldots,b_d)$ is $\mathfrak m_S$-primary, finitely many lifts of a basis of
\[
 S/(p,b_2,\ldots,b_d)
\]
generate $S$ over $A_0$ by successive approximation in the parameter ideal and completeness.  Thus the map is finite.  It is injective: its kernel is prime because $S$ is a domain; if it were nonzero, then the image would have dimension strictly smaller than $d$, contradicting dimension preservation under a finite integral extension.

Choose a prime $\mathfrak q$ of $S$ over $(T_2,\ldots,T_d)$.  Then $S/\mathfrak q$ is finite and integral over $\cO_0$.  Its fraction field is a finite extension $L/\cO_0[1/p]$, and the normalization maps it into the valuation ring $\cO_L$.  Every element of the maximal ideal of $S$ maps to positive valuation: some power of $\mathfrak m_S$ lies in the parameter ideal, and that ideal maps into the maximal ideal of $\cO_L$.  Hence the formal point defines a point of the open rigid generic fiber, proving nonemptiness.

On every sufficiently small affinoid neighborhood, completion of the finite $A_0$-module $S$ and inversion of $p$ give a finite morphism to an affinoid subdomain of the open $(d-1)$-ball.  Therefore the generic fiber has dimension at most $d-1\le n-2$.  Base change to $\Cp$ and passage to the reduction do not increase dimension and preserve nonemptiness.  A reduced rigid space locally of finite type over a characteristic-zero field has dense smooth locus; choosing a nonempty smooth affinoid open and using algebraic closedness of $\Cp$ gives a smooth $\Cp$-point.
\end{proof}

Let $\mathcal M_{\mathrm{LT},\Cp}$ be the rigid generic fiber of Lubin--Tate space, an open unit ball of dimension $n-1$. The Gross--Hopkins period map \cite[Theorem 1]{GrossHopkins}
\begin{equation}\label{eq:GH}
\Phi:\mathcal M_{\mathrm{LT},\Cp}\longrightarrow\mathbf P^{n-1}_{\Cp}
\end{equation}
is etale and equivariant for the stabilizer action. After scalar extension,
\[
D_n\otimes_{\Qp}\Cp\simeq M_n(\Cp),
\]
and the stabilizer action on projective space is the projectivization of the splitting representation.

\begin{theorem}[No nonzero stable prime avoiding $p$]\label{thm:generic}
If $P\subset R_n$ is prime, stable under an open subgroup of $\Sstab_n$, and $p\notin P$, then $P=0$.
\end{theorem}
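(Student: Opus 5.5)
We argue by contradiction. Suppose $0\neq P\subset R_n$ is prime, stable under an open subgroup $U\subset\Sstab_n$, with $p\notin P$. For $n=1$ the ring $R_1=W(\mathbf F_p)$ is a discrete valuation ring, so any nonzero prime contains $p$; hence we may assume $n\geq2$. Lemma \ref{lem:proper-generic} then gives a reduced closed rigid subspace $Z\subset\mathcal M_{\mathrm{LT},\Cp}$, the reduced generic fiber of $R_n/P$. It is nonempty, has dimension at most $n-2$, and contains a smooth $\Cp$-point $z$. Since $P$ is $U$-stable, $Z$ is stable under $U$. The plan is to show that the tangent space $T_zZ$ must have dimension $n-1$. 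This will contradict $\dim Z\le n-2$, because at a smooth point the tangent dimension equals the local dimension.

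\textbf{Infinitesimal action.} The first step is to differentiate the action. The group $U$ is a $p$-adic Lie group whose Lie algebra is $D_n$: it contains $1+p^N\cO_D$ for large $N$, and on it $\exp$ and $\log$ are inverse. For each $\xi\in D_n$, the one-parameter family $\exp(p^\ell\xi)$ lies in $U$ for $\ell\gg0$. Its action on the coordinates $u_i$ is given by power series depending analytically on the $p$-adic parameter, because the action of $\Sstab_n$ on $R_n$ is continuous and analytic. Differentiating this action at the identity gives a vector field $X_\xi$ on the open ball $\mathcal M_{\mathrm{LT}}$.

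\textbf{Tangency to $Z$.} Next we show that each $X_\xi$ is tangent to $Z$ at $z$. For $f\in P$, invariance gives $g(f)\in P$ for all $g\in U$. Along $g=\exp(a\xi)$ with $a\in p^{N}\Zp$, the function $a\mapsto g(f)(z)$ is analytic in $a$ and vanishes identically. Its derivative at $a=0$ is therefore zero, which says $X_\xi(f)(z)=0$. Hence $X_\xi(z)\in T_zZ$, where $T_zZ$ is defined by the differentials of the generators of $P$. The map $\xi\mapsto X_\xi(z)$ is $\Qp$-linear, so after extending scalars the $\Cp$-span of the vectors $X_\xi(z)$ also lies in $T_zZ$.

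\textbf{Transport to projective space.} Now use the Gross--Hopkins period map $\Phi$ of \eqref{eq:GH}. It is étale and equivariant, so $d\Phi_z$ is an isomorphism $T_z\mathcal M\to T_{\Phi(z)}\mathbf P^{n-1}$, and it carries $X_\xi(z)$ to the fundamental vector field of $\xi$ at $\Phi(z)$. On $\mathbf P^{n-1}$ the action of $D_n\otimes\Cp\simeq M_n(\Cp)$ is the projectivized standard action. At a line $[w]$ the fundamental vector fields are the images of $Mw$ in $\Hom([w],\Cp^n/[w])$, and since $M\mapsto Mw$ surjects onto $\Cp^n$, these span the whole tangent space. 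One must check that the $\Cp$-span of the $\Qp$-vectors $X_\xi$ agrees with the image of $D_n\otimes\Cp$. This holds because the map is $\Cp$-linear once extended. Hence $T_zZ$ has dimension $n-1$, contradicting $\dim Z\le n-2$ at a smooth point. So $P=0$.

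\textbf{Radical-ideal consequence.} With Theorem \ref{thm:special}, a stable prime either avoids $p$, and is then $0=I_0$, or contains $p$, and is then the preimage of some $J_j$, which is $I_{j+1}$. Every stable radical ideal $I$ is the intersection of its finitely many minimal primes. These primes are permuted by $U$, so an open subgroup of $U$ stabilizes each one, by the argument of Lemma \ref{lem:minimal-components-open}. Each minimal prime is therefore one of the $I_k$. Because the $I_k$ form a chain, the intersection equals the smallest of them.

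\textbf{Main obstacle.} I expect the hardest step to be making the differentiation rigorous. One needs that the $U$-action on the rigid ball is analytic in the group variable, so that the orbit map is differentiable, and that the derivative identity $d\Phi(X_\xi)=\xi\cdot$ holds. I would try to derive both from the equivariance of $\Phi$ and the explicit linear action of $\Sstab_n$ on the Dieudonné crystal.
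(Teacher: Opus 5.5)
Your proposal is correct and follows the paper's proof essentially step for step: Lemma \ref{lem:proper-generic} supplies a $U$-stable $Z$ with a smooth point, the orbit directions of $D_n$ are shown to be tangent to $Z$ there, and \'etaleness of the Gross--Hopkins map together with $D_n\otimes_{\Qp}\Cp\simeq M_n(\Cp)$ shows these directions span the full $(n-1)$-dimensional tangent space. The obstacle you flag is resolved in the paper just as you suggest, by writing the orbit through the local analytic inverse $\psi$ of $\Phi$ as $z\mapsto\psi(\exp(za)y)$, which is manifestly analytic, and then applying the identity theorem to local defining equations of the reduced $Z$; this is slightly cleaner than your use of generators of $P$ alone, which only controls the tangent space of the possibly non-reduced fiber.
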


\begin{proof}
For $n=1$, $R_1=\Zp$. Suppose $n\geq2$ and $P\neq0$, and let $U\subset\Sstab_n$ be an open subgroup stabilizing $P$. Let $Z$ be the nonempty reduced generic fiber of Lemma \ref{lem:proper-generic}, viewed as a closed rigid subspace of $\mathcal M_{\mathrm{LT},\Cp}$. It is stable under $U$. Choose a smooth point $x\in Z(\Cp)$, let $y=\Phi(x)$, and choose a local analytic inverse $\psi$ of $\Phi$ near $y$ passing through $x$.

For $a\in D_n$, consider the analytic curve
\[
z\longmapsto\psi(\exp(za)y)
\]
for $z$ in a sufficiently small $\Cp$-disc. For all sufficiently small $z\in\Qp$, one has $\exp(za)\in U$, hence the corresponding points lie in $Z$. Pull back a local defining equation of $Z$ along this curve. It vanishes at infinitely many nonzero $\Qp$-points tending to zero, so the nonarchimedean identity theorem makes it identically zero. Therefore the tangent vector at $z=0$ lies in $T_xZ$.

Under $d\Phi$, these tangent vectors are the infinitesimal actions of $a$ on the line $y\subset\Cp^n$.  Recall the standard identification
\[
 T_y\mathbf P^{n-1}\simeq\Hom_{\Cp}(y,\Cp^n/y).
\]
After scalar extension $D_n\otimes_{\Qp}\Cp\simeq M_n(\Cp)$.  Given any linear map $y\to\Cp^n/y$, extend it arbitrarily to an endomorphism of $\Cp^n$; the corresponding matrix lies in the scalar span of $D_n$ and produces the prescribed tangent vector.  Thus the tangent vectors coming from $a\in D_n$ span all of $T_y\mathbf P^{n-1}$.  Since $\Phi$ is \'etale, $d\Phi_x$ is an isomorphism and therefore
\[
 T_xZ=T_x\mathcal M_{\mathrm{LT},\Cp}.
\]
But $x$ is smooth on $Z$, while
\[
\dim Z\leq n-2<n-1=\dim\mathcal M_{\mathrm{LT},\Cp},
\]
a contradiction.
\end{proof}

\section{Radical ideals and the Hovey--Strickland deduction}

\begin{proposition}[Stable radical ideals]\label{prop:radical}
The proper radical ideals of $R_n$ stable under an open subgroup of $\Sstab_n$ are exactly
\[
I_0,I_1,\ldots,I_n.
\]
\end{proposition}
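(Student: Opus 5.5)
The plan is to reduce the classification of radical ideals to the classification of primes already obtained in Theorem~\ref{thm:special} and Theorem~\ref{thm:generic}. First we check that each $I_k$ is a proper radical ideal stable under all of $\Sstab_n$. For $k=0$ this is trivial. For $1\le k\le n$ we have $R_n/I_k=A_n/J_{k-1}$, which is a power-series domain over $\mathbf F_{p^n}$, so $I_k$ is prime. Its stability follows because $(p)$ is stable and $J_{k-1}$ is stable in $A_n$, the latter by the coordinate-change argument of Section~2.

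For the converse, let $I\subsetneq R_n$ be radical and stable under an open subgroup $U$. Since $R_n$ is noetherian, $I=P_1\cap\cdots\cap P_m$ with finitely many minimal primes $P_i$. The group $U$ permutes the $P_i$. Arguing as in Lemma~\ref{lem:minimal-components-open}, each stabilizer is closed: the action on $R_n/\mathfrak m^d$ factors through a finite quotient, and ideals are $\mathfrak m$-adically closed. Each stabilizer also has finite index, so it is open in the profinite group $U$. Hence every $P_i$ is stable under an open subgroup $U_i\subset\Sstab_n$.

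Now classify each $P_i$. If $p\notin P_i$, Theorem~\ref{thm:generic} gives $P_i=0=I_0$. If $p\in P_i$, then $P_i/(p)$ is a $U_i$-stable prime of $A_n$, so by Theorem~\ref{thm:special} it equals some $J_j$, and therefore $P_i=I_{j+1}$. The primes $I_0\subset I_1\subset\cdots\subset I_n$ form a chain, so among any minimal primes of $I$ only one can occur, by minimality and incomparability. Thus $m=1$ and $I=I_k$ for some $k$.

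The only step needing care is the passage from stability under $U$ to stability of each minimal prime under an open subgroup. This requires that the $\Sstab_n$-action on $R_n$ be continuous for the $\mathfrak m$-adic topology, so that the action on each $R_n/\mathfrak m^d$ factors through a finite quotient. This holds because the Lubin--Tate action of $g$ close to $1$ is congruent to the identity modulo high powers of $\mathfrak m$. Everything else is formal.
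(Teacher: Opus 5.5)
Your proposal is correct and follows the paper's proof essentially step for step. You pass to the finitely many minimal primes and show each is stable under an open subgroup (closed stabilizer of finite index in a compact group), classify them via Theorems \ref{thm:generic} and \ref{thm:special}, and use that the $I_k$ form a chain to force a single minimal prime; your closedness argument via the finite quotients $R_n/\mathfrak m^d$ (as in Lemma \ref{lem:minimal-components-open}) is equivalent to the paper's orbit-map continuity argument, and your explicit check that each $I_k$ is stable supplies a direction the paper leaves implicit.
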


\begin{proof}
A stable prime avoiding $p$ is zero by Theorem \ref{thm:generic}. A stable prime containing $p$ descends modulo $p$ to a stable prime of $A_n$, hence by Theorem \ref{thm:special} is one of the $I_k$.

Let $I\subset R_n$ be a proper radical ideal stable under an open subgroup $U$.  The complete Noetherian ring $R_n$ has only finitely many minimal primes over $I$, and $U$ permutes them.  Fix one minimal prime $P_0$.  Its stabilizer is closed: for each $f\in P_0$, the condition $g(f)\in P_0$ is closed in $g$ by continuity of the orbit map and adic closedness of $P_0$; applying the same condition to $g^{-1}$ gives equality rather than only inclusion.  Since the orbit of $P_0$ is finite, the stabilizer has finite index in the compact $p$-adic group $U$ and is therefore open.

Thus every minimal prime of $I$ is itself stable under an open subgroup.  The preceding prime classification shows that it is one of the $I_k$.  The ideals $I_0\subset I_1\subset\cdots\subset I_n$ form a chain, while distinct minimal primes over one ideal are incomparable.  Hence $I$ has a single minimal prime.  Since $I$ is radical, it equals that prime and therefore equals one of the $I_k$.
\end{proof}

We now use the forward direction of Barthel--Heard--Naumann \cite[Theorem 4.9 and Proposition 3.5]{BHN}; this implication has no large-prime hypothesis.  For a dualizable $K(n)$-local spectrum $X$, its completed Morava $E$-homology is finitely generated over $E_0$, and its support is a closed $\Gstab_n$-invariant subset of $\Spec E_0$.  The invariant-radical classification above therefore forces every such support to be one of the chromatic closed subsets $V(I_k)$.  Barthel--Heard--Naumann identify the corresponding thick tensor ideal by comparison with $L_{K(n)}F(k)$ and prove detection of thick generation after completed base change.

Their Theorem 4.9 states that, if the only proper $\Gstab_n$-invariant radical ideals in
\[
E_0=W(\mathbf F_{p^n})[[u_1,\ldots,u_{n-1}]]
\]
are the chromatic ideals $I_k$, then the thick tensor ideals of $\mathrm{Sp}_{K(n)}^{\mathrm{dual}}$ are precisely the chromatic ideals $\mathcal D_k$ displayed in Theorem \ref{thm:main}; their Proposition 3.5 identifies the prime tensor ideals and specialization order. The numerical hypothesis
\[
2p-2>n^2+n
\]
appears only in their converse realization theorem and is not used in this forward implication.

\begin{proof}[Proof of Theorem \ref{thm:main}]
Part (1) is Theorem \ref{thm:special}. Theorem \ref{thm:generic} together with Proposition \ref{prop:radical} proves part (2). Full $\Gstab_n$-invariance implies $\Sstab_n$-invariance, so Proposition \ref{prop:radical} supplies the invariant radical-ideal hypothesis required by the forward Barthel--Heard--Naumann theorem. This proves part (3), including the Balmer spectrum and its specialization order. Every step above allows $p=2$ and $p=3$.
\end{proof}

\appendix
\section{Uniform fixed-order control in Cartier coordinates}\label{app:uniform}

This appendix proves the uniform estimate used in Theorem \ref{thm:fixed-jet}. The issue is that a Cartier curve has infinitely many coordinates. Statements saying that each fixed coordinate tends to zero do not imply uniform convergence in the valuation \eqref{eq:uniform-valuation}. The argument below obtains estimates whose constants are independent of the output coordinate.

For $v\in(tB)^r$ and an integral Cartier curve $h\in M(B)$ define
\begin{equation}\label{eq:Gvh}
\mathcal G_{v,h}(A,d)
=V^{-(s-1)}\bigl(DA+s_{\cH}(v)+T_d(e+h+A)\bigr).
\end{equation}
Every term in parentheses lies in $V^{s-1}M$, so cancellation is legitimate. When $Dh=s_{\cH}(v)$, the equation $\mathcal G_{v,h}(A,d)=0$ is exactly the displaced structure equation.

\subsection{High-$V$ tails}

\begin{lemma}[Uniform tail ideal]\label{lem:tail-ideal}
For every Cartier curve $Z$ and $M\geq0$,
\begin{equation}\label{eq:tail-ideal}
V^{-(s-1)}T_d(V^MZ)
\in M(d_s^{p^M},\ldots,d_{n-1}^{p^M}).
\end{equation}
If $Z$ is integral and $d\in(tB)^r$, then
\[
\ord\bigl(V^{-(s-1)}T_d(V^MZ)\bigr)
\geq p^M v_t(d).
\]
If $d\in(v)$ formally, every output coordinate belongs to $(v)^{p^M}$.
\end{lemma}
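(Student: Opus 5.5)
The plan is to derive the lemma directly from the exact tail identity \eqref{eq:tail-identity}, using only the Cartier relations \eqref{eq:cartier-rel}.

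\emph{Commuting the Teichm\"uller operators past $V^M$.} From \eqref{eq:D},
\[
T_d=D_{u+d}-D_u=-\sum_{i=s}^{n-1}V^{i-1}\bigl([u_i+d_i]-[u_i]\bigr).
\]
Here the $F$ and $V^{n-1}$ terms cancel, since they do not depend on the deformation parameters. Iterating $[a]V=V[a^p]$ $M$ times gives $[a]V^M=V^M[a^{p^M}]$. Since the base has characteristic $p$, $(u_i+d_i)^{p^M}=u_i^{p^M}+d_i^{p^M}$, and hence
\[
V^{-(s-1)}T_d(V^MZ)=-\sum_{i=s}^{n-1}V^{M+i-s}\bigl([u_i^{p^M}+d_i^{p^M}]-[u_i^{p^M}]\bigr)Z.
\]
This is \eqref{eq:tail-identity} with the common factor $V^{s-1}$ removed. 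The removal is legitimate because $i\geq s$.

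\emph{The difference of Teichm\"uller operators.} The key point is that $[a+b]-[a]$ acting on a curve lies in $M\bigl((b)\bigr)$. To see this, write $Z=\sum_m V^m[z_m]e$. Then $[c]Z=\sum_m V^m[c^{p^m}z_m]e$, so
\[
[a+b]Z-[a]Z=\sum_m V^m\bigl([(a+b)^{p^m}z_m]-[a^{p^m}z_m]\bigr)e.
\]
Each summand is a Witt-type difference of two curves whose typical coordinates agree modulo $b$. By Zink's Lemma 4.19 and the integral-polynomial description of the Cartier operations, the $V$-expansion of a difference $V^m[x]e-V^m[y]e$ has every coordinate in the ideal generated by $x-y$. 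Concretely, Witt subtraction polynomials vanish when $x=y$, so every coordinate lies in $(x-y)$.

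Applying this with $b=d_i^{p^M}$, and using that sums and $V$-shifts preserve $M(I)$ for any ideal $I$, gives \eqref{eq:tail-ideal}.

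\emph{The two corollaries.} If $Z$ is integral and $d\in(tB)^r$, the ideal $(d_s^{p^M},\ldots,d_{n-1}^{p^M})$ is contained in $t^{p^Mv_t(d)}B$. This gives $\ord\geq p^Mv_t(d)$, uniformly in the output coordinate. In the formal case, take $d_i\in(v)$. Then $d_i^{p^M}\in(v)^{p^M}$, and the same ideal-containment argument applies in $B[[v]]$ or $K[[v]]$.

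\emph{Main obstacle.} The delicate step is controlling all Witt carries uniformly in the output position. In particular, one must check that the infinite sum over $m$ converges in the uniform valuation \eqref{eq:uniform-valuation}, with no loss as the coordinate index grows. I would handle this by working modulo $(d^{p^M})+t^N$ for each $N$. Modulo this ideal the two curves $[u_i^{p^M}+d_i^{p^M}]Z$ and $[u_i^{p^M}]Z$ are literally equal, so their difference vanishes identically. Therefore every coordinate of the difference lies in the ideal, and taking the inverse limit over $N$ finishes the argument.
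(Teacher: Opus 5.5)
Your proposal is correct and follows the paper's own proof: you derive the exact tail formula from $[a]V^M=V^M[a^{p^M}]$ and additivity of $p$-th powers in characteristic $p$, then reduce modulo $(d_s^{p^M},\ldots,d_{n-1}^{p^M})$, where the two Teichm\"uller operators coincide so the curve vanishes, and conclude by uniqueness of typical coordinates. The extra truncation modulo $t^N$ in your last paragraph is unnecessary, and it is degenerate over $K[[v]]$, where $t$ is a unit; reducing directly modulo the ideal, which is compatible with the Cartier operations by base change, already controls every output coordinate at once.
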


\begin{proof}
Using $[a]V^M=V^M[a^{p^M}]$ gives the exact formula
\[
T_d(V^MZ)
=-\sum_{i=s}^{n-1}V^{M+i-1}
\left([u_i^{p^M}+d_i^{p^M}]-[u_i^{p^M}]\right)Z.
\]
Reduce modulo the ideal $(d_s^{p^M},\ldots,d_{n-1}^{p^M})$. Each pair of Teichmuller operators in parentheses becomes equal, so the whole Cartier curve becomes zero. Uniqueness of typical coordinates implies that every output coordinate lies in the displayed ideal. The valuation and formal-degree assertions follow immediately.
\end{proof}

\subsection{The exact positive solution}

\begin{lemma}[Uniform inverse estimate]\label{lem:uniform-inverse}
Let $v\in(tB)^r$ and let $h\in M(B)$ have zeroth coordinate in $tB$. For positive-valuation inputs $X=(A,d)$ and $Y=(A',d')$,
\begin{equation}\label{eq:isometry}
v_t\bigl(\mathcal G_{v,h}(X)-\mathcal G_{v,h}(Y)\bigr)
=v_t(X-Y).
\end{equation}
The equation $\mathcal G_{v,h}(A,d)=0$ has a unique positive-valuation solution, and
\begin{equation}\label{eq:coarse-solution}
\min\{\ord(A),v_t(d)\}=v_t(v).
\end{equation}
\end{lemma}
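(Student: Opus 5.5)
The plan is to write $\mathcal G_{v,h}$ as its linear part $L_0$ from \eqref{eq:L0} plus a remainder that gains a factor of $t$, and then solve the equation by a Banach fixed-point argument. Concretely, decompose
\[
\mathcal G_{v,h}(A,d)=\mathcal G_{v,h}(0,0)+L_0(A,d)+\mathcal P(A,d),
\qquad \mathcal G_{v,h}(0,0)=V^{-(s-1)}s_{\cH}(v),
\]
because $T_0=0$. Here $\mathcal P$ collects the rest. The first group of terms in $\mathcal P$ is $V^{-(s-1)}D_+A$, where $D_+$ is as in the proof of Lemma \ref{lem:normal-form}. The second group consists of the terms of $V^{-(s-1)}T_d(e+h+A)$ beyond the leading ones $-d_i$. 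By \eqref{eq:L0}, $L_0$ is an isometric bijection for the product valuation $v_t(A,d)=\min\{\ord(A),v_t(d)\}$. So \eqref{eq:isometry} reduces to one claim:
\[
v_t\bigl(\mathcal P(X)-\mathcal P(Y)\bigr)>v_t(X-Y).
\]
The real target is the uniform version, with a gain of at least $1$ independent of the output coordinate. Given this, the ultrametric inequality yields equality in \eqref{eq:isometry}.

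I would verify the contraction estimate term by term, always in the uniform valuation \eqref{eq:uniform-valuation}. For the $D_+$ part, the estimate from Lemma \ref{lem:normal-form} already applies to differences. The $F$-term raises coordinates to $p$th powers, and $a^p-b^p$ gains valuation on positive inputs. The $V^{i-1}[u_i]$ terms carry a factor $u_i\in tB$.

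For the $T_d$ part, I would expand
\[
T_d(e+h+A)=-\sum_i V^{i-1}\bigl([u_i+d_i]-[u_i]\bigr)(e+h+A).
\]
Acting on $e$, the leading contribution is $-V^{i-1}[d_i]e$, which is exactly the $L_0$ term. The Witt carries of $[u_i+d_i]-[u_i]$ involve mixed monomials in $u_i$ and $d_i$, each containing a factor of $u_i$ or a higher power of $d_i$, so they gain $t$. Acting on $h+A$, the factor $[u_i+d_i]-[u_i]$ multiplies something whose zeroth coordinate lies in $tB$: for $h$ this is the hypothesis, and for $A$ it is positivity. Higher coordinates $V^m$ are handled by $[a]V^m=V^m[a^{p^m}]$. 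The tail-ideal bound of Lemma \ref{lem:tail-ideal} makes all of this uniform in $m$, including the carries, because every output coordinate lies in an ideal generated by differences of $p^m$th powers.

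The step I expect to be the main obstacle is controlling Witt carries uniformly in the output position when comparing two inputs rather than one. I would handle it as in the proof of Lemma \ref{lem:tail-ideal}: reduce modulo the ideal generated by the coordinate differences multiplied by $t$, and use uniqueness of typical coordinates. This shows that every output coordinate of $\mathcal P(X)-\mathcal P(Y)$ lies in that ideal, which gives the uniform gain.

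With the isometry in hand, existence and uniqueness follow by iteration:
\[
X_{k+1}=X_k-L_0^{-1}\mathcal G_{v,h}(X_k),\qquad X_0=0.
\]
This is a contraction on the complete space of positive-valuation pairs, and uniqueness is immediate from \eqref{eq:isometry}. For \eqref{eq:coarse-solution}, apply \eqref{eq:isometry} to the solution $X$ and to $Y=0$:
\[
v_t(X)=v_t\bigl(\mathcal G_{v,h}(X)-\mathcal G_{v,h}(0)\bigr)=v_t\bigl(V^{-(s-1)}s_{\cH}(v)\bigr)=v_t(v).
\]
This gives exactly \eqref{eq:coarse-solution}.
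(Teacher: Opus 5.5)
Your outline is the paper's argument. You split off the constant term $b=V^{-(s-1)}s_{\cH}(v)$ and the isometric linear part $L_0$, show the remainder gains a power of $t$ on differences, and deduce the isometry from the ultrametric inequality. You then solve by the contraction $X\mapsto -L_0^{-1}(b+\mathcal P(X))$, which is your iteration, and obtain \eqref{eq:coarse-solution} by comparing the solution with $Y=0$.

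The one step that does not work as written is the one you flagged as the main obstacle. Reducing modulo the ideal generated by $t(x_j-y_j)$ does not identify the inputs $X$ and $Y$, so uniqueness of typical coordinates gives nothing in that quotient. In universal variables the claim is in fact false: already $X^2-Y^2\notin\bigl(t(X-Y)\bigr)$. The trick in Lemma \ref{lem:tail-ideal} works only because, modulo $(d^{p^M})$, the two Teichm\"uller arguments literally coincide. That lemma also bounds $T_d$ for a single $d$, not differences, so it is not where uniformity comes from.

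What you need is exactly the structure you already verified term by term, and it is what the paper uses. Each output coordinate $\mathcal P_N$ of $\mathcal P$ is an integral polynomial in finitely many input coordinates, with the coordinates of $h$ and the $u_i$ as coefficients. Its linear part has coefficients in $tB$: these come from $u_i^{p^m}$, from $h_0$, and from the $u_i$-divisible linear terms of the higher Witt components of $[u_i+d_i]-[u_i]$. All its other monomials have degree at least $2$. For any such polynomial,
\[
\mathcal P_N(X)-\mathcal P_N(Y)\in (X-Y)\cdot(t,X,Y),
\]
because $\ell(X)-\ell(Y)=\ell(X-Y)$ for the linear part, and $X^I-Y^I\in(X-Y)(X,Y)$ when $|I|\ge2$. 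Hence for positive-valuation inputs the valuation of this difference is at least $v_t(X-Y)+1$. This bound does not depend on $\mathcal P_N$, so it is automatically uniform in the output position $N$, and the supposed obstacle disappears. With this replacement your proof is complete and coincides with the paper's.
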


\begin{proof}
Work on the complete coordinate space $\prod_{m\geq0}B$ with its uniform $t$-adic valuation. Linearize modulo $t$ using square-zero variables. Since all $u_i$ and $v$ vanish modulo $t$ and Frobenius kills square-zero inputs,
\[
DA=-V^{n-1}A.
\]
The first-order parameter variation acts only on the constant generator $e$. After cancelling $V^{s-1}$ the linear term is the isometric bijection
\[
L_0(A,d)=(-d_s,\ldots,-d_{n-1},-a_0,-a_1,\ldots).
\]
Write
\[
\mathcal G_{v,h}(X)=b+L_0X+R(X),
\qquad
b=(v_0,\ldots,v_{r-1},0,\ldots).
\]
The linear coefficients of $R$ lie in $tB$, while every nonlinear monomial has degree at least two and integral coefficients. Hence
\[
v_t(R(X)-R(Y))\geq v_t(X-Y)+1.
\]
The higher-order part cannot cancel the term $L_0(X-Y)$, proving \eqref{eq:isometry}.

Starting from $X_0=0$, iterate
\[
X_{j+1}=-L_0^{-1}(b+R(X_j)).
\]
This is a strict contraction on the complete positive-valuation space. Its fixed point is the unique zero, and comparison with $X=0$ in \eqref{eq:isometry} gives \eqref{eq:coarse-solution}.
\end{proof}

\subsection{Finite small-root blocks}

Consider
\begin{equation}\label{eq:DH}
DH=s_{\cH}(v).
\end{equation}
After cancelling $V^{s-1}$, the $m$th coordinate equation has triangular form
\begin{equation}\label{eq:triangular-H}
u_sH_m^q-u_s^{p^m}H_m+P_m(H_0,\ldots,H_{m-1})
=
\begin{cases}
v_m,&m<r,\\
0,&m\geq r,
\end{cases}
\end{equation}
where $P_m$ is an integral polynomial with zero constant term. The first occurrence of $H_m$ comes only from the two displayed terms; all Witt carries at that position involve earlier coordinates.

Fix $M$. The first $M$ equations form a finite triangular system whose Jacobian is lower triangular with diagonal entries $-u_s^{p^m}$, hence invertible over $K$. Therefore there is a unique zero-constant-term formal solution
\[
H_0(v),\ldots,H_{M-1}(v).
\]
For later evaluation, each fixed finite block must also converge after restricting to a sufficiently small $t$-adic input ball.  Let $J_H$ denote the Jacobian of the finite system at the origin.  Substitute
\[
 H=t^cY,\qquad v=t^bz.
\]
Choose $c$ so that $t^{-c}J_H^{-1}$ clears the finitely many denominators occurring in the linear inverse, and then choose $b$ sufficiently large that every remaining coefficient in the rescaled nonlinear terms is integral and has positive $t$-adic valuation.  The recursive implicit-function equations in $Y$ then have identity linear part and strictly contracting higher terms.  They therefore produce integral formal series.  Equivalently, there exists $b_M$ such that
\[
 H_m(t^{b_M}z)\in(z)B[[z]]\qquad(0\le m<M).
\]
This bound depends on $M$ but not on any later Cartier output coordinate.  Put
\[
\eta_M(v)=\sum_{m=0}^{M-1}V^m[H_m(v)]e.
\]

\begin{lemma}\label{lem:small-root-match}
Suppose $h_\ell\in M(B)$ satisfies $Dh_\ell=s_{\cH}(v_\ell)$, every fixed coordinate of $h_\ell$ tends to zero, and $v_\ell\to0$. Then for fixed $M$ and all sufficiently large $\ell$,
\begin{equation}\label{eq:small-root-match}
h_\ell-\eta_M(v_\ell)\in V^MM(B).
\end{equation}
\end{lemma}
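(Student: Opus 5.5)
We compare the first $M$ typical coordinates of $h_\ell$ with those of $\eta_M(v_\ell)$. Write $h_\ell=\sum_{m\ge0}V^m[h_{\ell,m}]e$. The claim \eqref{eq:small-root-match} says exactly that $h_{\ell,m}=H_m(v_\ell)$ for $0\le m<M$ and all large $\ell$. By uniqueness of typical expansions, subtracting two curves that agree in the first $M$ coordinates leaves a curve in $V^MM(B)$; Witt carries only move mass to higher positions, so coordinatewise agreement below $M$ suffices.

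First, read off from $Dh_\ell=s_{\cH}(v_\ell)$, after cancelling $V^{s-1}$, the triangular equations \eqref{eq:triangular-H} for the coordinates $h_{\ell,m}$. Fixed output coordinates of Cartier operations are integral polynomials in finitely many input coordinates, so this is legitimate. Thus for each $m<M$ the tuple $(h_{\ell,0},\ldots,h_{\ell,M-1})$ is a solution in $(tB)^M$ of the same finite system that $H_0(v_\ell),\ldots,H_{M-1}(v_\ell)$ solves. Here $h_{\ell,m}\in tB$ for large $\ell$ because each fixed coordinate tends to zero.

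Next, evaluate the formal solution. Since $v_\ell\to0$, for large $\ell$ we have $v_t(v_\ell)\ge b_M+1$. Hence $v_\ell=t^{b_M}z_\ell$ with $z_\ell\in(tB)^r$, and the integral series $H_m(t^{b_M}z)$ converge at $z_\ell$. The resulting values solve the block system and tend to zero as $\ell\to\infty$, since the series lie in $(z)B[[z]]$.

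The main step is uniqueness of small solutions of the finite block. The difficulty is that the equation $u_sH_m^q-u_s^{p^m}H_m+\cdots=v_m$ has other, \emph{large} roots of valuation about $(p^m-1)v_t(u_s)/(q-1)$ coming from the $H_m^q$ term. I would argue by induction on $m$. Suppose $h_{\ell,j}=H_j(v_\ell)$ for $j<m$. Then $x=h_{\ell,m}$ and $y=H_m(v_\ell)$ both satisfy
\[
u_s(x^q-y^q)=u_s^{p^m}(x-y).
\]
If $x\ne y$, divide by $x-y$. In characteristic $p$ we have $x^q-y^q=(x-y)^q$, so
\[
(x-y)^{q-1}=u_s^{p^m-1},
\]
which forces $v_t(x-y)=(p^m-1)v_t(u_s)/(q-1)$. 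This is a fixed positive number independent of $\ell$. It is excluded once both $x$ and $y$ have valuation exceeding it, and that holds for large $\ell$ because both tend to zero. Hence $x=y$. Taking $\ell$ large enough for all $m<M$ simultaneously gives \eqref{eq:small-root-match}.
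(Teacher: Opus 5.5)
Your proposal is correct and follows the paper's proof. Both argue by induction on the coordinate index $m<M$, reduce to the difference equation $u_s x^q=u_s^{p^m}x$ whose nonzero roots have the fixed valuation $(p^m-1)v_t(u_s)/(q-1)$, and exclude such roots because both candidate coordinates tend to zero. Beyond that, you only make explicit two details the paper leaves implicit: the rescaling $H_m(t^{b_M}z)\in(z)B[[z]]$ showing that $H_m(v_\ell)$ is defined and tends to zero, and the characteristic-$p$ identity $x^q-y^q=(x-y)^q$.
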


\begin{proof}
Proceed inductively on the coordinate index $m<M$. Once the preceding coordinates agree, the difference $x$ between two possible $m$th coordinates satisfies
\[
u_sx^q=u_s^{p^m}x.
\]
A nonzero solution has the fixed valuation
\[
\frac{(p^m-1)v_t(u_s)}{q-1}.
\]
Since both candidate coordinates tend to zero, for large $\ell$ their difference has larger valuation and must be zero. This proves \eqref{eq:small-root-match}.
\end{proof}

\subsection{Truncated gauge equations and stabilization}

Solve the truncated gauge equation with the exact source retained:
\begin{equation}\label{eq:truncated-gauge}
\mathcal G_{v,\eta_M(v)}(A_M(v),\delta_M(v))=0.
\end{equation}
The source remains $s_{\cH}(v)$; it is not replaced by $D\eta_M(v)$.

After the finite rescaling above, the derivative of \eqref{eq:truncated-gauge} in $(A,d)$ is $L_0+E$ with $E$ taking integral sequences into $t$ times integral sequences. Hence
\[
 (L_0+E)^{-1}=L_0^{-1}(1+L_0^{-1}E)^{-1}
 =L_0^{-1}\sum_{j\ge0}(-L_0^{-1}E)^j
\]
is a uniformly convergent Neumann series and preserves integral bounded coordinate sequences.  Solving recursively in total degree is therefore legitimate on the entire infinite output sequence: at a fixed degree only finitely many monomials in the finite input variables occur, the already constructed lower-degree coefficients form a uniformly bounded array, and applying the same inverse returns another uniformly bounded array.  This gives
\begin{equation}\label{eq:deltaM-integral}
\delta_M(t^{b_M}z)\in(z)B[[z]]^r,
\qquad
(A_M)_j(t^{b_M}z)\in(z)B[[z]]
\end{equation}
for every output coordinate $j$. In particular, if $v_t(v)>b_M$,
\begin{align}
 v_t(A_M(v),\delta_M(v))&\geq v_t(v)-b_M,\label{eq:AM-bound}\\
 v_t\bigl(\delta_M(v)-\delta_M^{(<J)}(v)\bigr)
 &\geq J(v_t(v)-b_M).\label{eq:delta-tail}
\end{align}
The degree-one equation gives
\[
\delta_M^{(1)}(v)\equiv v\pmod t.
\]
Before comparing different $M$, note the bounded-pole consequence.  At any fixed total degree $J$, only finitely many small-root coordinates $H_m$ and finitely many Witt carry polynomials can enter: moving a parameter difference across $V^m[H_m]$ introduces a factor $d_i^{p^m}$, so terms from indices with $p^m\ge J$ already have degree at least $J$.  Thus the coefficients of the degree-$<J$ part of $(A_M,\delta_M)$ have a common $t$-adic pole bound, independent of the infinite output position.  This is the finiteness input that permits evaluation of a fixed Taylor jet on the rapidly shrinking sequence $v_\ell$.

If $N\geq M$, then $\eta_N-\eta_M\in V^MM(K[[v]])$. Insert the $N$th solution into the $M$th equation. Lemma \ref{lem:tail-ideal} shows that the residual belongs coordinatewise to $(v)^{p^M}$. Since the linearized implicit operator is invertible,
\begin{equation}\label{eq:stabilization}
(A_N,\delta_N)\equiv(A_M,\delta_M)\pmod{(v)^{p^M}}.
\end{equation}
Thus for every fixed $J$, once $p^M\geq J$, the Taylor polynomial $\delta_M^{(<J)}$ is independent of $M$. These compatible jets define the formal series $\delta$ of Theorem \ref{thm:fixed-jet}, with $d_0\delta\in\mathrm{GL}_r(B)$ and $d_0\delta\equiv I_r\pmod t$.

\subsection{Comparison with the exact gauge}

Assume
\[
v_t(v_\ell)\geq a_\ell\to+\infty
\]
and let $(A_\ell,d_\ell)$ be the exact positive-valuation solution of $\mathcal G_{v_\ell,h_\ell}=0$. Fix $J$ and choose $M$ with $p^M\geq J$. For large $\ell$, Lemma \ref{lem:small-root-match} applies and $a_\ell>b_M$. Evaluating the truncated solution in the exact equation gives the residual
\[
V^{-(s-1)}T_{\delta_M(v_\ell)}
\bigl(h_\ell-\eta_M(v_\ell)\bigr).
\]
By Lemma \ref{lem:tail-ideal} and \eqref{eq:AM-bound}, every output coordinate has valuation at least
\[
p^M(a_\ell-b_M).
\]
Lemma \ref{lem:uniform-inverse} transfers this residual bound to the distance from the exact solution:
\[
v_t\bigl(d_\ell-\delta_M(v_\ell)\bigr)
\geq p^M(a_\ell-b_M).
\]
Combining with \eqref{eq:delta-tail} and stabilization of Taylor coefficients yields
\begin{equation}\label{eq:uniform-final}
v_t\bigl(d_\ell-\delta^{(<J)}(v_\ell)\bigr)
\geq Ja_\ell-Jb_M.
\end{equation}
The constant is independent of $\ell$ and of the output position in the infinite Cartier-coordinate sequence. In the application $a_\ell=q^\ell$, which proves the estimate used in Theorem \ref{thm:fixed-jet}.

\subsection{Application to the full Honda endomorphism order}\label{subsec:full-honda-appendix}

We finish the fixed-jet argument by relating the preceding construction to the full Morava stabilizer group. The full order $\cO_D$ enters at this point.

Fix $\alpha\in\cO_D$ and use the additive continuous lift
\[
 \lambda:\cO_D\longrightarrow M
\]
chosen in the proof of Lemma \ref{lem:obstruction}.  For large $\ell$ write
\[
 g_\ell-1=p^\ell\alpha+p^{2\ell-1}\zeta_\ell,
 \qquad \zeta_\ell\in\cO_D.
\]
Additivity and $\Zp$-linearity give
\begin{equation}\label{eq:lambda-expansion}
 \lambda(g_\ell-1)
 =p^\ell\lambda(\alpha)+p^{2\ell-1}\lambda(\zeta_\ell).
\end{equation}
Both $D\lambda(\alpha)$ and $D\lambda(\zeta_\ell)$ have positive $t$-adic coordinates because their reductions represent endomorphisms of the special Honda group.  Applying the degree-$q$ estimate for multiplication by $p$ therefore gives
\[
 D\lambda(g_\ell-1)\in M(t^{q^\ell}B).
\]
Taking exact normal form yields
\begin{equation}\label{eq:full-source-normal}
 D\lambda(g_\ell-1)=DB_\ell+s_{\cH}(v_\ell),
 \qquad
 B_\ell\in M(t^{q^\ell}B),
 \qquad
 v_t(v_\ell)\ge q^\ell.
\end{equation}
Set
\[
 h_\ell=\lambda(g_\ell-1)-B_\ell.
\]
Then $Dh_\ell=s_{\cH}(v_\ell)$, as required in the small-root analysis.

It remains to verify that every fixed initial Cartier coordinate of $h_\ell$ tends to zero.  Since $p=VF$ in the Cartier ring,
\[
 p^\ell M\subset V^\ell M.
\]
The first summand on the right of \eqref{eq:lambda-expansion} therefore has its first $\ell$ $V$-coordinates equal to zero.  The second summand has even larger $p$-adic divisibility.  The normal-form correction $B_\ell$ has uniform $t$-adic depth at least $q^\ell$, so each of its fixed coordinates tends to zero as well.  Consequently every fixed coordinate of $h_\ell$ tends to zero, and Lemma \ref{lem:small-root-match} applies.

Now let $(A_\ell,d_\ell)$ be the unique positive solution of the exact gauge equation
\begin{equation}\label{eq:actual-full-gauge}
 D_{u+d_\ell}(e+h_\ell+A_\ell)=0.
\end{equation}
Modulo $t$, the curve $h_\ell$ reduces to the special marking $g_\ell-1$, while $A_\ell$ has positive valuation.  Hence the new generator
\[
 e+h_\ell+A_\ell
\]
reduces to the generator with marking $g_\ell$.  Its linear coefficient is a unit, so the corresponding morphism of one-dimensional formal groups is an isomorphism.  By the definition of the stabilizer action on the universal deformation ring, the parameter point determined by \eqref{eq:actual-full-gauge} is therefore the translate $g_\ell u$.  With the convention used throughout the paper,
\begin{equation}\label{eq:actual-displacement}
 d_\ell=g_\ell u-u.
\end{equation}
Finally, the first $s-1$ parameters stay zero.  This can be seen intrinsically: the stabilizer action preserves the height filtration, so the ideal $(u_1,\ldots,u_{s-1})$ is stable.  Equivalently, in the gauge calculation the first $s-1$ structure coefficients do not occur among the displacement variables.  Thus the exact solution remains in the height-$s$ stratum and the $r$-tuple in \eqref{eq:actual-displacement} is precisely the displacement used in Theorem \ref{thm:fixed-jet}.

The same expansion also identifies the asymptotic source.  Additivity of $c$ gives
\[
 v_\ell=c(g_\ell-1)
 =[p]^\ell c(\alpha)+_{\cH}[p]^{2\ell-1}c(\zeta_\ell).
\]
The first term represents $x_\alpha$.  The second has original valuation at least $q^{2\ell-1}$, so after division by the asymptotic scale $q^\ell$ its valuation tends to $+\infty$.  Hence
\[
 \langle v_\ell\rangle=x_\alpha.
\]
This proves that all hypotheses used in the fixed-order comparison arise from the full Honda endomorphism order.

\end{document}